\newcommand{\Om}{\Omega}
\newcommand{\na}{\nabla}
\def\B{\tilde{b}}
\def\U{\tilde{u}}
\def\du{\tilde{U}}
\def\db{\tilde{B}}
\newcommand{\p}{\partial}
\newcommand{\C}{\cdot}
\def\A{\alpha}
\def\BB{\beta}
\def\CC{\gamma}
\def\va{\varphi}
\def\ps{\psi}
\def\De{\Delta}
\def\na{\nabla}
\def\v{\varepsilon}
\newcommand{\ddt}{\frac{d}{dt}}
\newcommand{\f}{\frac{1}{2}}
\newtheorem{thm}{Theorem}[section]
\newtheorem{prop}[thm]{Proposition}
\newtheorem{lem}[thm]{Lemma}
\newtheorem{re}[thm]{Remark}
\newtheorem{no}[thm]{Notation}
\newtheorem{de}[thm]{Definition}
\newcommand{\beq}{\begin{equation}}
\newcommand{\eeq}{\end{equation}}
\newcommand{\ben}{\begin{eqnarray}}
\newcommand{\een}{\end{eqnarray}}
\newcommand{\beno}{\begin{eqnarray*}}
\newcommand{\eeno}{\end{eqnarray*}}
\numberwithin{equation}{section}
\subjclass[2020]{ 35Q30, 76W05, 76D05, 35Q86}
\keywords{Primitive equations with magnetic field; Anisotropic MHD equation; Strong convergence}
\begin{document}

\title[\resizebox{5in}{!}{The primitive equations with magnetic field approximation of the 3D MHD equations}]{ The primitive equations with magnetic field approximation of the 3D MHD equations}
\author[Lili Du and  Dan Li]{Lili Du$^{1}$ and  Dan Li$^{2}$}

\address{$^1$ Department of Mathematics, Sichuan University, Chengdu 610064, P.R. China}

\email{dulili@scu.edu.cn}

\address{$^2$ Department of Mathematics, Sichuan University, Chengdu 610064, P.R. China}

\email{dandy0219@hotmail.com}

\vskip .2in
\begin{abstract}
 In our earlier work \cite{DLL}, we have shown the global well-posedness of strong solutions to the three-dimensional primitive equations with the magnetic field (PEM) on a thin domain. The heart of this paper is to provide a rigorous justification of the derivation of the PEM as the small aspect ratio limit of the incompressible three-dimensional scaled magnetohydrodynamics (SMHD) equations in the anisotropic horizontal viscosity and magnetic field regime. For the case of $H^1$-initial data case, we prove that global Leray-Hopf weak solutions of the three-dimensional  SMHD equation strongly converge to the global strong solutions of the PEM. In the $H^2$-initial data case, the strong solution   of the SMHD can be extended to be a global one for small $\v$. As a consequence, we observe that the global strong solutions of the SMHD strong converge to the global strong solutions of the PEM. As a byproduct, the convergence rate is of the same order as the aspect ratio parameter.
\end{abstract}
\maketitle

\section{Introduction}
\subsection{Background and motivation}
\vskip.1in

The magnetohydrodynamics (MHD) system studies the dynamics of electrically conducting fluids under the influence of magnetic fields. There are many examples of conducting fluids, including
plasmas, liquid metals, electrolytes, etc. The main idea of magnetohydrodynamics is that conducting fluids can support magnetic fields. More precisely, magnetic fields can induce currents in a moving conducting fluid,  creating forces on the fluid and also changing the magnetic fields themselves. The subject of magnetohydrodynamics unites classical fluid dynamics with electrodynamics, and references can be found in \cite{HA1, DB,  HB, PAD, BD1}. Besides their wide physical applications, the global well-posedness of the MHD equations is an active topic in mathematics. The existence and uniqueness results for weak and strong solutions of the 2D MHD equations are well known by Duvaut and Lions in \cite{DL}. In general, for the 3D case, it is currently unknown whether the solutions  can develop finite-time singularities even if the initial value is sufficiently smooth. Different criteria for regularity in terms of the velocity field, the magnetic field, the pressure, or their derivatives have been proposed (see \cite{CC, KY, HC, CZ, QC1, AH, HP, JW1, JW2, JW3, JW4, JW, WY1} and references therein). One of the most elegant works is given by He and Xin in \cite{HC, CZ}, in which they first realized that the velocity fields played a dominant role in the regularity of the solution to 3D incompressible MHD equations.

%On the other hand, as for the hydrodynamic case, the vanishing dissipation problem for the MHD equations in the presence of physical boundaries is important both in theory and in applications, and is also a challenging topic. Due to the great complexities of the MHD system, there have been fewer results in the rigorous mathematical treatment of boundary layers for the MHD equations compared with the hydrodynamics until recently. It is worth mentioning that the uniform local well-posedness of the strong solution
%with higher-order uniform regularity and the asymptotic convergence with a rate to the solution of the ideal MHD equation as the dissipations tend to zero has been introduced in \cite{QD1}.

\vskip .1in
 Without the magnetic field, the MHD equations reduce to the Navier-Stokes equations. In the context of the geophysical flow concerning the large-scale oceanic dynamics, the vertical scale of the global atmosphere is much smaller than the horizontal one. Based on this relationship, by scaling the incompressible Navier-Stokes equations concerning the aspect ratio parameter and taking the small aspect ratio limit, one formally obtains the primitive equations for the large-scale oceanic dynamics. The global existence of strong solutions for the 2D case was established by Bresch et al. \cite{DB1} and Temam and Ziane in \cite{RT1}, while the 3D primitive equations have already been known since the breakthrough work by Cao and Titi  \cite{CCET1}, see also   \cite{GK1, IK1, MH1, MH2}. In the last few years, developments concerning the global well-posedness to the anisotropic primitive equations were also made, see \cite{CCL1, CCL2, CCL3, CCL4, CCL5, CCL6}. The rigorous justification of the primitive equations from the scaled Navier-Stokes equations was  studied by Az\'erad and Guill\'en in \cite{AF}. By relying on the result in \cite{AF} to prove the weak convergence, Li and Titi in \cite{JKL} showed that the Navier-Stokes equations strongly converge to the primitive equations. Furukawa et al. \cite{KF1} extended the results by  Li and Titi, and the convergence result is shown in the much more general  $L^p-L^q$ setting.  Furukawa, Giga and Kashiwabara \cite{KF2} showed that the solution to the scaled Navier-Stokes equations with Besov initial data converges to the solution to the primitive equation with the same initial data.

\vskip .1in

In our paper \cite{DLL}, the  anisotropic viscosity and magnetic diffusivity scaling in the horizontal and vertical directions, so that  the PEM derived from the scaled MHD equation, as the aspect ratio  goes to zero.  Moreover, we have shown the global well-posedness of strong solutions to the three-dimensional incompressible PEM without any small assumption on the initial data. Motivated by the rigorous justification  of the limiting process, the convergence from Navier-Stokes equations to primitive equations by Li and Titi in \cite{JKL} in the strong setting. In this paper, we will investigate  the  rigorously justify the scaled MHD equations convergence strongly to the PEM, globally and uniformly in time, and that the convergence rate is of the same order as the aspect ratio parameter. These will  be the main consequences of the studies in this paper.

\subsection{Anisotropic MHD equations}
\vskip.1in
In this subsection we consider the three-dimensional anisotropic MHD equations on an $\varepsilon$-dependent thin domain
 $$\Omega_\varepsilon:=M\times(-\varepsilon,\varepsilon)\subset \mathbb{R}^3,$$
 where $\varepsilon>0$ is a very small parameter, and $M=(0,L_1)\times(0,L_2)$, for two positive constants $L_1$ and $L_2$ of order $O(1)$ with respect to $\varepsilon$.

 The incompressible three-dimensional anisotropic MHD system is
\begin{equation}\label{1}
 \left\{
\begin{array}{l}
\p_t u + u\cdot \nabla u + \nabla p - \mu\Delta_H u-\nu\partial_z^2 u=b\cdot\nabla b,\\
\p_t b+ u\cdot \nabla b-\kappa\Delta_H b-\sigma\partial_z^2 b=b\cdot\nabla u, \\
\nabla\cdot u =0,  \quad \nabla\cdot b=0,
 \end{array} \right.
 \end{equation}
with\\
\leftline{$- u=(\U, u_3)$ \text{is the velocity field},~~ \text{where}~~$\U=(u_1, u_2)$~~\text{is the horizontal velocity field},}
\leftline{$- b=(\B, b_3)$\text{ is the magnetic field},~~\text{where}~~ $\B=(b_1, b_2)$~~\text{is the horizontal magnetic field},}
  \leftline{$- p$ \text{is the pressure},}
 \leftline{$- \mu$ \text{is the horizontal viscous coefficient},}
 \leftline{$- \nu$ \text{ is the vertical viscous coefficient},}
  \leftline{$- \kappa$ \text{is the horizontal magnetic diffusivity coefficient},}
 \leftline{$- \sigma$ \text{is the vertical magnetic diffusivity coefficient},}
\leftline{$- \De=\De_{H}+\p_z^2$,~~ \text{where}~~$\De_{H}=\p_x^2+\p_y^2$~~\text{is the horizontal Laplacian},}
\leftline{$- \na=(\p_x,\p_y,\p_z)$ \text{is gradient operator},}
\leftline{$-\na\C$ \text{is divergence operator}.}

  Similar to the case considered in Az\'erad-Guill\'en \cite{AF}, it is  emphasized that the anisotropic viscosity hypothesis is fundamental for the derivation of the primitive equations. In this paper,  we suppose that  $\mu$ and $\nu$ have different orders, that is, $\mu=O(1)$ and $\nu=O(\varepsilon^2)$. The orders of magnetic diffusivity coefficients $\kappa$ and $\sigma$ are similar to the viscous coefficients. For the sake of simplicity, we set  $\mu=1$ and $\nu=\varepsilon^2,$ similarly, $\kappa=1$ and $\sigma=\varepsilon^2.$
\vskip .1in

\subsection{Scaled  MHD equations}
\vskip .1in

 We carry out the following scaling transformation to  the  MHD equations (\ref{1}) such that the resulting system is defined on a fixed domain independent of $\varepsilon$. To this end, we introduce the new unknowns,
$$u_\varepsilon=(\U_\varepsilon, u_{3,\varepsilon}),\quad b_\varepsilon=(\tilde{b}_\varepsilon, b_{3,\varepsilon}),\quad p_\varepsilon(x,y,z,t)=p(x,y,\varepsilon z,t),$$
$$\U_\varepsilon(x,y,z,t)=\U(x,y,\varepsilon z,t)=(u_1(x,y,\varepsilon z,t), u_2(x,y,\varepsilon z,t)),$$
$$\tilde{b}_\varepsilon(x,y,z,t)=\tilde{b}(x,y,\varepsilon z,t)=(b_1(x,y,\varepsilon z,t), b_2(x,y,\varepsilon z,t)),$$
and
$$u_{3,\varepsilon}(x,y,z,t)=\frac{1}{\varepsilon}u_3(x,y,\varepsilon z,t),\qquad b_{3,\varepsilon}(x,y,z,t)=\frac{1}{\varepsilon}b_3(x,y,\varepsilon z,t).$$

For any $(x,y,z)\in\Omega: = M\times (-1,1) $ and  $t\in [0,\infty)$, then  $u_\varepsilon$, $b_\varepsilon$ and $p_\varepsilon$ satisfy the following incompressible  scaled  MHD equations (SMHD)
\begin{equation}\label{2}
 \left\{
\begin{array}{l}
\p_t \U_\varepsilon + u_\varepsilon\cdot \nabla \U_\varepsilon + \nabla_H p_\varepsilon - b_\varepsilon\cdot\nabla\tilde{b_\varepsilon}-\Delta\U_\varepsilon=0,\\
\varepsilon^2(\partial_t u_{3,\varepsilon}+u_\varepsilon\cdot\nabla u_{3,\varepsilon}-\Delta u_{3,\varepsilon} - b_\varepsilon\cdot\nabla b_{3,\varepsilon})+\partial_z p_\varepsilon=0,\\
\partial_t\tilde{b_\varepsilon}+u_\varepsilon\cdot\nabla\tilde{b_\varepsilon}-\Delta\tilde{b_\varepsilon}-b_{\varepsilon}\cdot\nabla \U_{\varepsilon}=0, \\
\varepsilon^2(\partial_t b_{3,\varepsilon}+u_{\varepsilon}\cdot\nabla b_{3,\varepsilon}-\Delta b_{3,\varepsilon}-b_\varepsilon\cdot\nabla u_{3,\varepsilon})=0,\\
\nabla\C u_\varepsilon=0,\quad \nabla\C b_\varepsilon=0,
 \end{array} \right.
 \end{equation}
with the following initial conditions
\begin{equation}\label{pm1}
 \left\{
\begin{array}{l}
(\U_\varepsilon,u_{3,\varepsilon})|_{t=0}=(\U_{\varepsilon,0},u_{3,\varepsilon,0}),\\ (\tilde{b}_\varepsilon,b_{3,\varepsilon})|_{t=0}=(\B_{\varepsilon,0},b_{3,\varepsilon,0}).\\
 \end{array} \right.
 \end{equation}

 The above equations (\ref{2}) are defined in the fixed domain $\Omega$. Throughout this paper, we set $\na_H$ to denote $(\p_x, \p_y)$. In addition, we equip the system (\ref{2}) with the following periodic boundary conditions,
\begin{align}\label{pm2}
\U_{\v}(x,y,z-1,t)=\U_{\v}(x+L_1,y+L_2,z+1,t),
\end{align}
\begin{align}\label{pm2}
 u_{3,\v}(x,y,z-1,t)=u_{3,\v}(x+L_1,y+L_2,z+1,t),
\end{align}
\begin{align}\label{pm4}
\B_{\v}(x,y,z-1,t)=\B_{\v}(x+L_1,y+L_2,z+1,t),
\end{align}
\begin{align}\label{pm5}
b_{3,\v}(x,y,z-1,t)=b_{3,\v}(x+L_1,y+L_2,z+1,t),
\end{align}
and
\begin{align}\label{6}
&p_{\v}(x,y,z-1,t)=p_{\v}(x+L_1,y+L_2,z+1,t).
\end{align}

Furthermore,  for simplicity, we suppose that the space of periodic functions with  respect to $z$ with the following symmetry
\begin{align}\label{b2}
&\U_\v(x,y,z,t)=\U_\v(x,y,-z,t),\quad u_{3,\v}(x,y,z,t)=-u_{3,\v}(x,y,-z,t),\\
&p_{\v}(x,y,z,t)=-p_{\v}(x,y,-z,t),\quad \B_{\v}(x,y,z,t)=\B_{\v}(x,y,-z,t),
\end{align}
and
\begin{align}\label{b3}
\quad b_{3,\v}(x,y,z,t)=-b_{3,\v}(x,y,-z,t).
\end{align}

Note that the dynamics of SMHD preserve these symmetry conditions. In other words, they are automatically satisfied as long as they are satisfied initially. So in this article, without further mention, we always assume that the initial horizontal velocity and  magnetic field $(\U_{0,\v}$, $\tilde{b}_{0,\v})$ satisfy that
\begin{align*}\label{b4}
\U_{0,\v}~,\B_{0,\v}~ \text{are  periodic in}~x,~y,~z,~\text{and are even in}~z.
\end{align*}

By the classic theory see, e.g., \cite{DL}, for any initial data $(u_0, b_0)\in L^2(\Om)$, there is global weak solution $(u, b)$ to the SMHD equation (\ref{2}), subject to (\ref{pm1})-(\ref{6}), here the weak solutions is defined as follows.

\vskip.1in
\begin{de}\label{7}
A weak solution  $(u,b)$ of the SMHD (\ref{2}) is called Leray-Hopf weak solution, if $(u,b)\in C_w([0,\infty);L_\sigma^2(\Om))\cap L_{loc}^2([0,\infty),H^1(\Om))$, where the subscript $w$ means weakly continuous and $L_\sigma^2(\Om)$ denotes the space consisting of all divergence-free functions in $L^2(\Om)$ and
\begin{align}
&\|\U(t)\|_2^2+\|\B(t)\|_2^2+\v^2\|u_3(t)\|_2^2+\v^2\|b_3(t)\|_2^2\notag\\
&+2\int_0^t(\|\na \U\|_2^2+\|\na \B\|_2^2+\v^2\|\na u_3\|_2^2
+\v^2\|\na b_3\|_2^2)\,ds\notag\\
\leq&\|\U_0\|_2^2+\|\B_0\|_2^2+\v^2\|u_{3,0}\|_2^2+\v^2\|b_{3,0}\|_2^2,
\end{align}
for a.e. $t\in[0,\infty).$

Moreover, the following integral identity holds
\begin{align*}
&\int_{Q}\Big[-(\U\C\p_t\varphi_H+\v^2u_{3}\p_t\varphi_3)-(\B\C\p_t\psi_H+\v^2b_{3}\p_t\psi_3)
+(u\C\na)\U\C\varphi_H\\
&-(b\C\na)\B\C\varphi_H+\v^2u\C\na u_{3}\varphi_3-\v^2b\C\na b_{3}\varphi_3+(u\C\na)\B\C\psi_H-(b\C\na)\U\C\psi_H\\
&+\v^2u\C\na b_{3}\psi_3-\v^2b\C\na u_{3}\psi_3+\na\U:\na\varphi_H+\v^2\na u_{3}\C\na\varphi_3+\na\B:\na\psi_H\\
&+\v^2\na b_{3}\C\na\psi_3\Big]\,dxdydzdt\\
&=\int_{\Om}(\U_0\C\varphi_H(\cdot,0)+\v^2u_{3,0}\varphi_3(\C,0))\,dxdydz
+\int_{\Om}(\B_0\C\psi_H(\C,0)+\v^2b_{3,0}\psi_3(\C,0))\,dxdydz,
\end{align*}
 where $Q:=\Om\times(0,\infty)$, for any space periodic functions $\varphi=(\varphi_H,\va_3)$ and  $\ps=(\ps_H,\ps_3)$, with $\va_H=(\va_1,\va_2)$ and $\ps_H=(\ps_1,\ps_2)$, the divergence-free test functions   $\va$ and $\ps$ satisfy $\va\in C_0^{\infty}(\overline{\Om}\times[0,\infty))$ and  $\ps\in C_0^\infty(\overline{\Om}\times[0,\infty))$.
\end{de}

\subsection{Primitive equations with magnetic field}
\vskip .1in

By taking the limit as  $\varepsilon\rightarrow 0$ in the SMHD (\ref{2}),  it is straightforward to obtain the following primitive equations with magnetic field (PEM)
\begin{equation}\label{b5}
 \left\{
\begin{array}{l}
\partial_t \U+u\cdot\nabla \U-\Delta \U-b\cdot\nabla\tilde{b}+\nabla_{H} p = 0,\\
\partial_z p=0,\\
\partial_t\tilde{b}+u\cdot\nabla\tilde{b}-\Delta\tilde{b}-b\cdot\nabla \U=0,\\
\nabla_{H}\cdot \U+\partial_z u_3=0, \qquad \nabla_{H}\cdot\tilde{b}+\partial_z b_3=0.
\end{array} \right.
 \end{equation}

Recalling that we consider the periodic initial-boundary value problem to the SMHD equations (\ref{2}), it is clear that one should impose the same boundary conditions and symmetry conditions on the corresponding limiting system (\ref{b5}). However, one only needs to impose the initial condition on the horizontal velocity field and magnetic field. Since $u_{3,0}$ and $b_{3,0}$ are odd in $z$, we have $u_{3,0}(x,y,0)=b_{3,0}(x,y,0)=0$. Then, $(u_{3,0}, b_{3,0})$ can be determined uniquely  by the incompressibility conditions as
\begin{align}\label{3}
u_{3,0}(x,y,z)=-\int_0^z\nabla_H\cdot \U_0(x,y,\xi)\,d\xi,
\end{align}
and
\begin{align}\label{30}
b_{3,0}(x,y,z)=-\int_0^z\nabla_H\C\B_0(x,y,\xi)\,d\xi.
\end{align}

Similarly, $(u_3,b_3)$ can also be determined uniquely  by the incompressibility conditions as
\begin{align}\label{t3}
u_{3}(x,y,z,t)=-\int_0^z\nabla_H\cdot \U(x,y,\xi,t)\,d\xi,
\end{align}
and
\begin{align}\label{mm}
b_{3}(x,y,z,t)=-\int_0^z\nabla_H\cdot\tilde{b}(x,y,\xi,t)\,d\xi.
\end{align}

Due to these facts concerning the solutions to (\ref{b5}), we only solve the horizontal components $(\U, \B)$, and the vertical components $(u_3, b_3)$ are uniquely determined by (\ref{t3}) and (\ref{mm}). Throughout this paper, all the velocities and the magnetic field encountered in this paper are of average zero. We will not review this fact in  the rest of this article, before using the Poincar\'e inequality.

%we always suppose, that the initial data $(\U_0, \B_0)$ satisfy
%$$\int_{\Om}\U_0(x,y,z)\,dxdydz=\int_{\Om}\B_0(x,y,z)\,dxdydz=0.$$

%Therefore, by integrating the horizontal momentum equations of the PEM over $\Om$, respectively,  one has
%$$\int_{\Om}\U_{\v}(x,y,z,t)\,dxdydz=\int_{\Om}\U(x,y,z,t)\,dxdydz=0,$$
%and
%$$\int_{\Om}\B_{\v}(x,y,z,t)\,dxdydz=\int_{\Om}\B(x,y,z,t)\,dxdydz=0.$$
%For any $(x,y,z)\in\Om$, we have
%$$\int_{\Om}u_{3,\v}(x,y,z,t)\,dxdydz=\int_{\Om}u_{3}(x,y,z,t)\,dxdydz=0,$$
%and
%$$\int_{\Om}b_{3,\v}(x,y,z,t)\,dxdydz=\int_{\Om}b_{3}(x,y,z,t)\,dxdydz=0.$$

\subsection{Main ideas of the construction}
\vskip.1in
The main results in this paper are concerned with the strong convergence from the SMHD equations to the PEM, as the aspect ratio parameter goes to zero. For the first result, given $(\U_0, \B_0)\in H^1(\Om)$, we prove that global Leray-Hopf weak solutions of the three-dimensional  SMHD equation strongly converge with the global strong solutions of the PEM.  More precisely, we prove the strong convergence
 $$(\U_{\v}, \v u_{3,\v}, \B_{\v}, \v b_{3, \v})\rightarrow(\U,0,\B,0)~~ \text{in}~~ L^\infty(0,\infty;L^2(\Om)).$$
On the other hand, given the initial data $(\U_0,\B_0)\in H^2(\Om)$, the strong solution  $(\U_{\v},\v u_{3,\v},\B_{\v}, \v b_{3,\v})$ of the SMHD can be extended to be a global one for small $\v$. As a consequence, we observe that the global strong solutions of SMHD strong convergence to the global strong solutions of the PEM, that is
 $$(\U_{\v},\v u_{3,\v},\B_{\v}, \v b_{3,\v})\rightarrow (\U, 0,\B,0) ~~\text{in}~~ L^\infty(0,\infty; H^1(\Om)),$$
 and the converge rate of two regimes are  the order $O(\v)$.

We now make some comments on the analysis of this paper. The treatments on the estimates of the difference function $(U_{\v}, B_{\v})=(u_{\v}-u, b_{\v}-b)$ are different in the proofs of the first  and second results. For the case of the first result, since $(\U_{\v}, u_{3,\v}, \B_{\v}, b_{3,\v})$  is the Leray-Hopf weak solution, the energy estimates cannot be directly used for the system of difference between the SMHD and  the PEM, as it is usually used for strong solutions. Instead, we adopt a similar approach by Serrin in \cite{JS} (see also Bardos et al. \cite{CBM} and the reference therein), showing the weak-strong uniqueness of the Navier-Stokes equations. The difference is that in our article, the  ``strong solutions" is  played by the solutions of PEM, while the  ``weak solutions" is  played by the solutions of SMHD. Now, we explain in more detail  how we perform the global Leray-Hopf weak solutions  of SMHD strong convergence to the global strong solutions of the PEM,

(1) using $(\U,u_3, \B,b_3)$ as the testing functions for the SMHD, we get  one equality,

 (2) testing the PEM by $(\U_{\v},\B_{\v})$, it follows from integration by parts, we have one equality,

 (3) applying  the PEM by $(\U, \B)$, we obtain the basic energy identity of the PEM,

(4) Recalling the definition of Leray-Hopf weak solutions to the SMHD, we have the energy inequality of the SMHD.

Adding $(3)$ and $(4)$, then subtracting from  $(1)$ and $(2)$, we obtain a new integral inequality.
Appropriately manipulating this formulas, we get the desired \emph{a priori} estimates for $(U_{\v}, B_{\v})$.
\vskip.1in

For the second  case of  strong convergence, one gets the desired global in time estimates on $(U_{\v}, B_{\v})$ by using standard energy estimates. $(U_{\v}, B_{\v})$ denotes the difference between $u_{\v}$, $u$, $b_{\v}$ and $b$, namely,
$$U_{\v}=(\du_{\v},U_{3,\v}),\qquad \du_{\v}=\U_{\v}-\U,\qquad U_{3,\v}=u_{3,\v}-u_3,$$
and
$$B_{\v}=(\db_{\v},B_{3,\v}),\qquad \db_{\v}=\B_{\v}-\B,\qquad B_{3,\v}=b_{3,\v}-b_3,$$
then, one can easily verify that $U_{\v}=(\du_{\v},U_{3,\v})$ and $B_{\v}=(\db_{\v},B_{3,\v})$ satisfy the following system
\begin{equation}\label{n1}
 \left\{
\begin{array}{l}
\p_t \du_{\v}-(U_{\v}\C\na)\du_{\v}-\De\du_{\v}+\na_H P_{\v}+(u\C\na)\du_{\v}+(U_{\v}\C\na)\U\\
-B_{\v}\C\na \db_{\v}-(b\C\na)\db_{\v}-B_{\v}\C\na\B=0,\\
\v^2(\p_t U_{3,\v}+U_{\v}\C\na U_{3,\v}-\De U_{3,\v}+U_{\v}\C\na u_3+u\C\na U_{3,\v}
-B_{\v}\C\na B_{3,\v}\\
-b\C\na B_{3,\v}-B_{\v}\C\na b_3)+\p_z p_{\v}
=-\v^2(\p_t u_3+u\C\na u_3-\De u_3-b\C\na b_3),\\
\p_t \db_{\v}+U_{\v}\C\na \db_{\v}-\De\db_{\v}+u\C\na\db_{\v}+U_{\v}\C\na\B-B_{\v}\C\na \du_{\v}-b\C\na\du_{\v}-B_{\v}\C\na\U=0,\\
\v^2(\p_t B_{3,\v}+U_{\v}\C\na B_{3,\v}+u\C\na B_{3,\v}+U_{\v}\C\na b_3-\De B_{3,\v}-B_{\v}\C\na U_{3,\v}\\
-b\C\na U_{3,\v}-B_{\v}\C\na u_3)=-\v^2(\p_t b_3+u\C\na b_3-\De b_3-b\C\na u_3),\\
\na_H\C\du_{\v}+\p_z U_{3,\v}=0,\qquad \na_H\C\db_{\v}+\p_zB_{3,\v}=0.
\end{array} \right.
 \end{equation}

We  recommend some new thoughts described in the following process. At first, since the initial value of $(\tilde U_{\v},U_3, \tilde B_{\v}, B_3)$ disappears, and there is a small coefficient $\v^2$ in the front of the ``outside forcing'' terms on the right-hand side of the $(\ref{n1})_2$ and $(\ref{n1})_4$, we can perform the energy approach and take advantage of the small parameters  to obtain the required \emph{a priori} estimate on $(\tilde U_{\v}, U_3, \tilde B_{\v},B_3)$. Moreover,  the strong solution $(\U_{\v}, u_{3,\v}, \B_{\v},b_{3,\v})$ of the SMHD can be developed to the global solution, for small $\v$.
 Second, the  information of $(U_{3,\v},B_{3,\v})$ that  comes from equations $(\ref{n1})_2$ and $(\ref{n1})_4$ is always related to the parameter $\v$, which will ultimately go to zero.
 That is to say, the equations $(\ref{n1})_2$ and $(\ref{n1})_4$  have not provided  the information of  $\v$-independent of $(U_{3,\v},B_{3,\v})$. After the attainment of the desired \emph{a priori} estimates, the strong convergence follows instantly.

%Second, we can  observe that when we multiply  the equations $(\ref{n1})_1$ and $(\ref{n1})_3$ by $\De\tilde U_{\v}$  and $\De\tilde B_{\v}$, the nonlinear  term$U_{3,\v}\p_z\tilde U_{\v}$, $B_{3,\v}\p_z\tilde B_{\v}$,  $U_{3,\v}\p_z\tilde B_{\v}$ and $B_{3,\v}\p_z \tilde U_{\v}$  in   equations $(\ref{n1})_1$ and $(\ref{n1})_3$ can be only handled  as}
%$$U_{3,\v}(x,y,z,t)=-\int_0^z\na_{H}\C\tilde U_{\v}(x,y,\xi,t)\,d\xi,$$
%and
%$$B_{3,\v}(x,y,z,t)=-\int_{0}^z\na_{H}\C\tilde B_{\v}(x,y,\xi,t)\,d\xi.$$

In our previous work \cite{DLL}, we showed the global existence of the strong solution and uniqueness (regularity) to the  three-dimensional incompressible PEM without any small assumption on the initial data. More precisely, there exists a unique strong solution globally in time for any given $H^2$-smooth initial data. As mentioned in the comments, the global well-posedness of strong solutions to the PEM plays a fundamental role proving  the strong convergence of the small aspect ratio limit of the SMHD to the PEM. The main results of \cite{DLL} are stated as follows.
\begin{prop}\label{5}
(see \cite[Remark 1.2]{DLL}).
If the initial data  $(\U_0,\B_0)$ belong to  $H^1(\Om)$,  then there exists  a unique global strong  solution to the  PEM (\ref{b5}), which satisfies $(\B,\U)\in L^\infty([0,\infty);H^1(\Om))\cap L^2([0,\infty);H^2(\Om)), (\p_t\B,\p_t\U)\in L^2([0,\infty);L^2(\Om)).$
\end{prop}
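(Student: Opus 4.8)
This is recorded as a remark in \cite{DLL}, so here I only outline how I would arrive at it. The cleanest route combines two ingredients: the global well-posedness of strong solutions for $H^2$ data proved in \cite{DLL}, and a family of \emph{a priori} estimates at the $H^1$ level whose constants depend on the datum only through $\|(\U_0,\B_0)\|_{H^1(\Om)}$. Granting these, one argues as follows: approximate $(\U_0,\B_0)\in H^1(\Om)$ by divergence-free, mean-zero, $z$-even data $(\U_0^n,\B_0^n)\in H^2(\Om)$ converging to it in $H^1$; for each $n$ obtain the global $H^2$ strong solution $(\U^n,\B^n)$ from \cite{DLL}; observe that the $H^1$ estimates are uniform in $n$; and pass to the limit (using Aubin--Lions compactness for the nonlinear terms) to produce a global solution with $(\U,\B)\in L^\infty(0,\infty;H^1)\cap L^2(0,\infty;H^2)$ and $(\p_t\U,\p_t\B)\in L^2(0,\infty;L^2)$. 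Uniqueness then follows from an $L^2$ energy estimate for the difference of two such solutions with the same data, using precisely this regularity. One could also construct the solution directly by a Galerkin scheme in the same subspace of $H^1$, the a priori estimates below supplying both global existence and the stated regularity, after eliminating the pressure as for the primitive equations ($p=p(x,y,t)$ is the surface pressure determined by the horizontal divergence of the vertical average of $\U$, and $(u_3,b_3)$ are recovered from $(\ref{t3})$--$(\ref{mm})$).

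The substance is therefore the global bound $\sup_{t\ge0}\|(\U,\B)(t)\|_{H^1}^2+\int_0^\infty(\|\U\|_{H^2}^2+\|\B\|_{H^2}^2)\,ds\le C(\|(\U_0,\B_0)\|_{H^1})$, which I would establish in the order of Cao and Titi \cite{CCET1}. \emph{(i) Basic energy.} Testing the $\U$-equation of $(\ref{b5})$ by $\U$ and the $\B$-equation by $\B$ and adding: the pressure integral vanishes by incompressibility (recall $\p_z p=0$), the transport terms drop, and the two magnetic cross terms combine into $\int b\C\na(\U\C\B)=0$ since $\na\C b=0$, which gives $\|(\U,\B)(t)\|_2^2+2\int_0^t\|\na(\U,\B)\|_2^2\,ds\le\|(\U_0,\B_0)\|_2^2$. \emph{(ii) $L^6$-bound.} Working with the baroclinic parts of $\U,\B$ (whose equations carry no hydrostatic pressure, since $\na_H p$ is independent of $z$) and handling the vertical transport through $u_3=-\int_0^z\na_H\C\U$ exactly as in \cite{CCET1}, the magnetic terms being treated identically, one obtains $\U,\B\in L^\infty(0,\infty;L^6(\Om))$. \emph{(iii) $\p_z$-estimate.} Differentiating the $\U$- and $\B$-equations in $z$ eliminates the pressure (as $\p_z\na_H p=0$), and testing the resulting equations by $\p_z\U$ and $\p_z\B$, using (ii) and anisotropic Agmon/Ladyzhenskaya inequalities in the thin direction, yields $\sup_t\|\p_z(\U,\B)(t)\|_2^2+\int_0^\infty\|\na\p_z(\U,\B)\|_2^2\,ds<\infty$. \emph{(iv) Horizontal gradients.} Split $\U,\B$ into their vertical averages $\bar\U,\bar\B$ (the barotropic modes, which solve a perturbed two-dimensional MHD system) and the remainders (the baroclinic modes); control the barotropic part by two-dimensional estimates and the baroclinic part via (iii) and anisotropic interpolation, absorbing the velocity--magnetic cross terms into the dissipation, and close by a Gronwall argument. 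This gives the full $H^1$ bound; parabolic regularity then upgrades it to $(\U,\B)\in L^2(0,\infty;H^2)$ and $(\p_t\U,\p_t\B)\in L^2(0,\infty;L^2)$.

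The main obstacle is step (iv), the global-in-time control of the horizontal gradients. It is there that the primitive-equation structure must be used fully --- the hydrostatic pressure depending only on $(x,y,t)$ and the barotropic/baroclinic splitting --- and there that the magnetic coupling $b\C\na\B$, $b\C\na\U$ contributes genuinely new triple products, e.g.\ of the type $\na_H\U\,\p_z\B\,\na_H\B$; keeping the velocity and magnetic estimates coupled through a single Gronwall inequality and absorbing such products into $\|\na^2\U\|_2^2+\|\na^2\B\|_2^2$ by anisotropic inequalities is the technical core carried out in \cite{DLL}.
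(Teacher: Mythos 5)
The paper does not actually prove this proposition: it is imported verbatim from \cite[Remark 1.2]{DLL}, so there is no argument here to compare yours against line by line. Your reconstruction is sound and is, in outline, the route the cited reference is expected to take: global $H^2$ solutions from \cite{DLL} together with $H^1$ \emph{a priori} bounds whose constants depend only on $\|(\U_0,\B_0)\|_{H^1}$, then density of $H^2$ data plus Aubin--Lions (or a Galerkin scheme), with uniqueness from an $L^2$ estimate on the difference of two solutions using exactly the anisotropic inequalities of Lemmas \ref{b7}--\ref{b8}; indeed the computation in Proposition \ref{d2} of this paper is essentially that uniqueness estimate in weak--strong form, so your claim that uniqueness follows from the stated regularity is consistent with what the paper itself later does. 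The one caveat is that your steps (ii) and (iv) are precisely where the extension of the Cao--Titi scheme \cite{CCET1} from the primitive equations to the PEM is genuinely nontrivial: the Lorentz force $b\cdot\nabla\tilde{b}$ contains $b_3\,\partial_z\tilde{b}$ with $b_3=-\int_0^z\nabla_H\cdot\tilde{b}\,d\xi$, so the $L^6$ and horizontal-gradient estimates for $\U$ and $\B$ must be closed simultaneously rather than sequentially, and it is not automatic that the new cross terms absorb into the dissipation. You correctly identify this as the technical core and defer it to \cite{DLL}, which is legitimate for a result the paper itself only cites, but it does mean your outline is a plan for the hard step rather than a proof of it.
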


\begin{prop}\label{t1}
(see \cite[Theorem 1.1]{DLL}).
 Suppose that $(\U_0,\B_0) \in H^2(\Omega)$,  then there exists a unique  global strong solution $(\U,\B)\in L^{\infty}([0,\infty);H^2(\Om))\cap L^2([0,\infty);H^3(\Om))$ of the  PEM (\ref{b5}), subject to  the boundary and initial conditions (\ref{pm1})-(\ref{b3}). Moreover, we have the following estimate,
\begin{align*}
&\sup\limits_{0\leq t< \infty} \|\U\|_{H^2}^2(t)+\sup\limits_{0\leq t<\infty}\|\tilde{b}\|_{H^2}^2(t)+\int_0^\infty(\|\na \U\|_{H^2}^2+\|\partial_t \U\|_{H^1}^2)\,dt\\
&+\int_0^\infty(\|\na\tilde{b}\|_{H^2}^2+\|\partial_t\tilde{b}\|_{H^1}^2)\,dt
\leq C,
\end{align*}
for a constant $C$ depending only on $\|\U_0\|_{H^2}$, $\|\B_0\|_{H^2}$, $L_1$ and $L_2$.
\end{prop}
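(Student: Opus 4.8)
The strategy is to upgrade the $H^1$ theory of Proposition \ref{5} to $H^2$. Proposition \ref{5} already provides, for $H^1$ data, a unique global strong solution with $(\U,\B)\in L^\infty([0,\infty);H^1(\Om))\cap L^2([0,\infty);H^2(\Om))$ and $(\p_t\U,\p_t\B)\in L^2([0,\infty);L^2(\Om))$, with norms controlled by $\|\U_0\|_{H^1},\|\B_0\|_{H^1},L_1,L_2$; in particular $\int_0^\infty(\|\na\U\|_{H^1}^2+\|\na\B\|_{H^1}^2)\,dt<\infty$, and uniqueness in the smaller class $L^\infty_t H^2\cap L^2_t H^3$ is automatic. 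Local $H^2$ existence is classical (Galerkin, using $H^2(\Om)\hookrightarrow L^\infty(\Om)$ together with the fact that $u_3=-\int_0^z\na_H\C\U$ and $b_3=-\int_0^z\na_H\C\B$ are as regular in the vertical variable as $\U,\B$ and one derivative more regular horizontally). Hence everything reduces to the time-uniform \emph{a priori} bound $\sup_{t\ge 0}(\|\U\|_{H^2}^2+\|\B\|_{H^2}^2)+\int_0^\infty(\|\na\U\|_{H^2}^2+\|\na\B\|_{H^2}^2+\|\p_t\U\|_{H^1}^2+\|\p_t\B\|_{H^1}^2)\,dt\le C$, which both forbids blow-up and gives the stated estimate.

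To obtain it I would differentiate the $\U$- and $\B$-equations of (\ref{b5}) in $t$ and test with $\p_t\U$ and $\p_t\B$. The transport terms $\int(u\C\na)\p_t\U\C\p_t\U$ and $\int(u\C\na)\p_t\B\C\p_t\B$ vanish because $\na\C u=0$; the most dangerous contributions are $\int(\p_t u\C\na)\U\C\p_t\U$ and $\int(\p_t u\C\na)\B\C\p_t\B$, which — exactly as in the scalar primitive equations — are estimated through an anisotropic Ladyzhenskaya/Cao--Titi inequality of the type $\int_\Om|fgh|\,d\Om\le C\|f\|_{L^2}\|g\|_{L^2}^{1/2}\|g\|_{H^1}^{1/2}\|h\|_{L^2}^{1/2}\|h\|_{H^1}^{1/2}$, so that after Young one full gradient is absorbed into the dissipation $\|\na\p_t\U\|_2^2+\|\na\p_t\B\|_2^2$, leaving a factor $\big(\|\na\U\|_{H^1}^2+\|\na\B\|_{H^1}^2\big)\big(\|\p_t\U\|_2^2+\|\p_t\B\|_2^2\big)$. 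The magnetic terms obtained by $\p_t$ hitting the Lorentz force and the stretching term split into a pair $\int b\C\na\p_t\B\C\p_t\U+\int b\C\na\p_t\U\C\p_t\B$ that cancels after integration by parts (using $\na\C b=0$, just as in the basic energy identity) and a remainder $\int(\p_t b\C\na\B)\C\p_t\U+\int(\p_t b\C\na\U)\C\p_t\B$ of the same order as the terms already handled. Since $\int_0^\infty(\|\na\U\|_{H^1}^2+\|\na\B\|_{H^1}^2)\,dt<\infty$ by Proposition \ref{5}, Gronwall yields $\sup_t(\|\p_t\U\|_2^2+\|\p_t\B\|_2^2)+\int_0^\infty(\|\na\p_t\U\|_2^2+\|\na\p_t\B\|_2^2)\,dt\le C$, uniformly in time; the data $\|\p_t\U(0)\|_2^2+\|\p_t\B(0)\|_2^2$ are finite because, from (\ref{b5}), $\p_t\U(0)=\De\U_0+b_0\C\na\B_0-u_0\C\na\U_0-\na_H p(0)\in L^2$ and $\p_t\B(0)=\De\B_0+b_0\C\na\U_0-u_0\C\na\B_0\in L^2$ when $(\U_0,\B_0)\in H^2$ (here $p(0)$ solves the 2D elliptic problem coming from the vertical average of the momentum equation together with $\na_H\C\bar{\U}_0=0$, so $\na_H p(0)\in L^2$). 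Finally, reading the first and third equations of (\ref{b5}) as the stationary system $-\De\U+\na_H p=-\p_t\U-u\C\na\U+b\C\na\B$, $-\De\B=-\p_t\B-u\C\na\B+b\C\na\U$ with right-hand sides now bounded in $L^\infty_t L^2$ (and in $L^2_t H^1$), elliptic regularity for this hydrostatic-type problem — using $p=p(x,y,t)$ and $\na_H\C\bar{\U}=0$ to recover the pressure — gives $\sup_t(\|\U\|_{H^2}^2+\|\B\|_{H^2}^2)\le C$ and, integrating a further such estimate in $t$, $\int_0^\infty(\|\na\U\|_{H^2}^2+\|\na\B\|_{H^2}^2)\,dt<\infty$, which is the claimed bound.

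The main obstacle is the one familiar from the primitive equations without magnetic field: controlling nonlinear terms in which a vertical derivative lands on $\U$ or $\B$, given that $u_3,b_3$ are not free unknowns but are reconstructed by vertically integrating horizontal divergences and are therefore only horizontally regular. The two structural facts that rescue the argument — the pressure is independent of $z$, so differentiating the horizontal equations in $z$ or in $t$ produces no pressure gradient, and the dissipative quantities from the lower-order theory are globally integrable in time, so they can always be paired against the low-order ``bad'' factors to keep every Gronwall constant independent of the time horizon — enter here through Proposition \ref{5}. The genuinely new point to verify is that the magnetic stretching contributions $\p_z b\C\na\B$ and $\p_z b\C\na\U$ (and their $\p_t$- and spatially-differentiated analogues), which have no Navier--Stokes counterpart, are subcritical: since $\p_z b=(\p_z\B,-\na_H\C\B)$ these terms carry only first horizontal derivatives of $\B$, hence they are of exactly the same order as the classical convective term $\p_z u\C\na\U$ and are absorbed by the same anisotropic inequality with no smallness assumption. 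Uniqueness in $L^\infty_t H^2\cap L^2_t H^3$ needs no separate argument, being a special case of the $H^1$ uniqueness already asserted in Proposition \ref{5}.
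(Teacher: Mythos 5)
The first thing to note is that the paper does not prove this proposition: it is quoted verbatim from the companion work \cite[Theorem 1.1]{DLL}, and no argument for it appears anywhere in the present text, so there is no in-paper proof to measure your proposal against. Taken on its own terms, your outline follows the standard Cao--Titi/Li--Titi scheme on which results of this kind rely --- local existence, a $\p_t$-energy estimate whose Gronwall coefficient is time-integrable thanks to the $H^1$ theory of Proposition \ref{5}, and a spatial bootstrap --- and the structural points you isolate ($\p_z p=0$ so that $t$- and $z$-differentiation produce no pressure, the cancellation of the symmetric magnetic pair after integration by parts, and $\p_z b=(\p_z\B,-\na_H\C\B)$ keeping the stretching terms at convective order) are the right ones.

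Two steps are too loose to stand as written. The inequality you invoke,
\begin{equation*}
\int_\Om|fgh|\,dxdydz\le C\|f\|_{2}\,\|g\|_{2}^{\f}\|g\|_{H^1}^{\f}\,\|h\|_{2}^{\f}\|h\|_{H^1}^{\f},
\end{equation*}
is the two-dimensional Ladyzhenskaya inequality and is false for generic functions on the three-dimensional $\Om$: by H\"older it would require $\|g\|_4\lesssim\|g\|_2^{1/2}\|g\|_{H^1}^{1/2}$, whereas in 3D only the exponents $\|g\|_2^{1/4}\|g\|_{H^1}^{3/4}$ are available. The terms it is meant to control, e.g. $\int\p_t u_3\,\p_z\U\C\p_t\U$, are saved not by this inequality but by the anisotropic structure $\p_t u_3=-\int_0^z\na_H\C\p_t\U\,d\xi$, which permits Lemma \ref{b7} at the price of a \emph{full} horizontal gradient $\|\na_H\p_t\U\|_2$ on one factor; the Young exponents then close only because $\|\na\U\|_2$ is bounded and $\|\na\U\|_{H^1}^2$ is time-integrable. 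Second, the bound $\int_0^\infty\|\na\U\|_{H^2}^2\,dt<\infty$ does not follow from static elliptic regularity applied at each time: the right-hand side of the stationary problem contains $u_3\p_z\U$, and while $\|u_3\|_{H^1}\lesssim\|\U\|_{H^2}$, one has $\|u_3\p_z\U\|_{H^1}\lesssim\|\U\|_{H^2}\|\U\|_{H^3}$, a top-order contribution that must be absorbed into the dissipation of a genuine second-order energy estimate (again via Lemma \ref{b7} or Lemma \ref{b8}), not merely placed on the right-hand side. Neither defect is fatal --- both are resolved in \cite{DLL} --- but as it stands the proposal is an outline of the known strategy rather than a self-contained proof.
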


\vskip .1in
\subsection{The structure of this paper}

The remainder of this paper is organized as follows: Section 2 is dedicated to the basic notations and some Ladyzhenskaya-type inequalities. Section 3, this section is devoted to the case of $(\U_0,\B_0)\in H^1(\Om)$, we prove that global Leray-Hopf weak solutions of the three-dimensional  SMHD equations strongly converge to the global strong solutions of the PEM. In Section 4,  for the $(\U_0,\B_0)\in H^2(\Om)$ case, the strong solution of the SMHD can be extended to be a global one, for small $\v$. Moreover,  we observe that the global strong solutions of  the SMHD strong converge to the global strong solution of the PEM, and the convergence rate is  the same order as the aspect ratio parameter.

\section{Preliminaries}
In this section, we introduce the notations and some Ladyzhenskaya type inequalities  for some kinds of three dimensional integrals, which will be frequently used in the rest of this paper.

\begin{no}
For $q\in[1,\infty]$, we will denote the Lebesgue spaces on the domain $\Omega$ by $L^q=L^q(\Omega)$. For simplicity of notation we will use  $\|\cdot\|_q$  and $\|\cdot\|_{q,M}$ instead of $L^q(\Omega)$ and  $L^q(M)$. For $s\in\mathbb{N}$ the space $H^s(\Omega)$ consists of $f\in L^2(\Omega)$ such that $\nabla^\alpha f\in L^2(\Omega)$ for $|\alpha|\leq s$ endowed with the norm
$$\|f\|_{H^s(\Omega)}=\Big(\sum_{|\alpha|\leq s}\|\na^\alpha f\|_{L^2(\Omega)}^2\Big)^{\frac{1}{2}}.$$
\end{no}

%\begin{lem} \label{4}
 %(see, e.g.,\cite{RAA, PCCF, OAL }). The following Sobolev and Ladyzhenskaya  inequalities hold in $\mathbb{R}^3$,
%\begin{align}\label{t25}
%\|\phi\|_{L^3(\Omega)}\leq C_0\|\phi\|_{L^2(\Omega)}^{\frac{1}{2}}\|\phi\|_{H^1({\Omega})}^{\frac{1}{2}},
%\end{align}
%and
%\begin{align}\label{t26}
%\|\phi\|_{L^6(\Omega)}\leq C_0\|\phi\|_{H^1({\Omega})},
%\end{align}
%for every $\phi\in H^1(\Omega)$.
%\end{lem}

Next,  we state some Ladyzhenskaya-type inequalities for some kinds of three dimensional integrals.
\begin{lem} \label{b7}
(see \cite[Lemma 2.1]{CCET} ). The following inequalities hold true
\begin{align*}
&\int_M\Big(\int_{-1}^1\alpha(x,y,z)\,dz\Big)\Big(\int_{-1}^1\beta(x,y,z)\gamma(x,y,z)\,dz\Big)dx dy \\
\leq & C\|\A\|_2^{\frac{1}{2}}\Big(\|\A\|_2^{\frac{1}{2}}+\|\nabla_{H}\A\|_2^{\frac{1}{2}}\Big)\|\BB\|_2\|\CC\|_2^{\frac{1}{2}}
\Big(\|\CC\|_2^{\frac{1}{2}}+\|\nabla_{H}\CC\|_2^{\frac{1}{2}}\Big),
\end{align*}
and
\begin{align*}
&\int_M\Big(\int_{-1}^1\A(x,y,z)\,dz\Big)\Big(\int_{-1}^1\BB(x,y,z)\CC(x,y,z)\,dz\Big)dx dy \\
\leq & C\|\A\|_2\|\BB\|_2^{\frac{1}{2}}\Big(\|\BB\|_2^{\frac{1}{2}}+\|\nabla_H \BB\|_2^{\frac{1}{2}}\Big)\|\CC\|_2^{\frac{1}{2}}\Big(\|\CC\|_2^{\frac{1}{2}}+\|\nabla_H \CC\|_2^{\frac{1}{2}}\Big),
\end{align*}
for any $\A$, $\BB$, $\CC$ such that the right-hand sides make sense and are finite, where $C$ is a positive constant depending only on $L_1$ and $L_2$.
\end{lem}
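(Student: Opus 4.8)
The plan is to prove both Ladyzhenskaya-type inequalities by reducing the three-dimensional integrals to two-dimensional ones on $M$, then invoking the classical two-dimensional Ladyzhenskaya inequality $\|f\|_{4,M}\le C\|f\|_{2,M}^{1/2}\|f\|_{H^1(M)}^{1/2}$ together with H\"older's inequality in $M$. The first step is to handle the vertical direction: for fixed $(x,y)$, estimate the inner $z$-integrals by quantities that no longer involve $z$. For the factor $\int_{-1}^1\alpha\,dz$ one simply uses Cauchy--Schwarz in $z$ to get $\big(\int_{-1}^1\alpha\,dz\big)\le C\big(\int_{-1}^1\alpha^2\,dz\big)^{1/2}$ (the interval has finite length $2$), and similarly $\int_{-1}^1\beta\gamma\,dz\le \big(\int_{-1}^1\beta^2\,dz\big)^{1/2}\big(\int_{-1}^1\gamma^2\,dz\big)^{1/2}$. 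Writing $\bar{A}(x,y):=\big(\int_{-1}^1\alpha^2\,dz\big)^{1/2}$ and analogously $\bar{B},\bar{C}$, the left-hand side of the first inequality is bounded by $C\int_M \bar{A}\,\bar{B}\,\bar{C}\,dxdy$.

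Next I would apply H\"older on $M$ in the form $\int_M \bar A\bar B\bar C\le \|\bar B\|_{2,M}\,\|\bar A\|_{4,M}\,\|\bar C\|_{4,M}$, and observe the Fubini identities $\|\bar B\|_{2,M}=\|\beta\|_2$, $\|\bar A\|_{4,M}^2=\|\bar A^2\|_{2,M}=\big\|\int_{-1}^1\alpha^2 dz\big\|_{2,M}$, etc. Then the two-dimensional Ladyzhenskaya inequality gives $\|\bar A\|_{4,M}\le C\|\bar A\|_{2,M}^{1/2}\big(\|\bar A\|_{2,M}+\|\nabla_H\bar A\|_{2,M}\big)^{1/2}$, and one checks $\|\bar A\|_{2,M}=\|\alpha\|_2$ while $\|\nabla_H\bar A\|_{2,M}\le\|\nabla_H\alpha\|_2$ (since $|\nabla_H\bar A|\le \big(\int_{-1}^1|\nabla_H\alpha|^2 dz\big)^{1/2}/\,$normalization, via differentiating under the integral and Cauchy--Schwarz). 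Doing the same for $\bar C$ and collecting the factors yields exactly the first claimed bound. For the second inequality the only change is the choice of H\"older exponents on $M$: one keeps $\bar A$ in $L^2(M)$ and puts $\bar B,\bar C$ in $L^4(M)$, i.e. $\int_M\bar A\bar B\bar C\le\|\bar A\|_{2,M}\|\bar B\|_{4,M}\|\bar C\|_{4,M}$, and then proceeds identically, producing the $\|\alpha\|_2$ factor without a gradient and the two $H^1$-type factors for $\beta$ and $\gamma$.

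The main obstacle — really the only subtle point — is the horizontal-gradient bookkeeping: one must justify that $\|\nabla_H\bar A\|_{2,M}\le \|\nabla_H\alpha\|_2$ where $\bar A=\big(\int_{-1}^1\alpha^2\,dz\big)^{1/2}$. This follows from $\nabla_H\bar A = \big(\int_{-1}^1\alpha\,\nabla_H\alpha\,dz\big)\big/\bar A$ wherever $\bar A>0$, hence $|\nabla_H\bar A|\le \big(\int_{-1}^1|\nabla_H\alpha|^2\,dz\big)^{1/2}$ by Cauchy--Schwarz in $z$, and then integrating $|\nabla_H\bar A|^2$ over $M$ and using Fubini; the set $\{\bar A=0\}$ contributes nothing since $\nabla_H\bar A=0$ a.e. there. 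Since the inequalities are quoted from \cite[Lemma 2.1]{CCET}, it suffices to sketch this reduction; the remaining manipulations are the standard Ladyzhenskaya/H\"older estimates on the two-dimensional domain $M$ and require no new ideas.
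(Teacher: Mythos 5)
The paper does not prove this lemma --- it only cites \cite[Lemma 2.1]{CCET} --- and your sketch is correct and is essentially the standard argument from that reference: Cauchy--Schwarz in $z$ to reduce to $\int_M\bar A\bar B\bar C$, H\"older on $M$ with exponents $(4,2,4)$ resp.\ $(2,4,4)$, the two-dimensional Ladyzhenskaya inequality, and the pointwise bound $|\nabla_H\bar A|\le(\int_{-1}^1|\nabla_H\alpha|^2\,dz)^{1/2}$. The only cosmetic caveat is that the reduction $\int_{-1}^1\alpha\,dz\le\sqrt2\,\bar A$ presumes $\alpha\ge0$ (or absolute values on the left-hand side), which is how the lemma is used throughout the paper.
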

\begin{lem} \label{b8}
(see \cite[Lemma 2.2]{JKL}). Let $\varphi=(\varphi_1,\varphi_2,\varphi_3)$, $\phi$ and $\psi$ be periodic functions with  domain $\Omega$. Assume that $\varphi\in H^1(\Omega)$, with $\nabla\cdot\varphi=0$ in $\Omega$ ,$\int_{\Omega}\varphi dx dy dz =0$, and $\varphi_3|_{z=0}=0$, $\nabla\phi\in H^1(\Omega)$ and $\psi\in L^2(\Omega)$. $\varphi_H=(\varphi_1,\varphi_2)$  denotes the horizontal components of the function $\varphi$. Then,  the following estimate holds
\begin{align*}
\Big|\int_{\Omega}(\varphi\cdot\nabla\phi)\psi\,dxdydz\Big|\leq C\|\nabla\varphi_H\|_2^{\frac{1}{2}}\|\Delta\varphi_H\|_2^{\frac{1}{2}}\|\nabla\phi\|_2^{\frac{1}{2}}\|\Delta\phi\|_2^{\frac{1}{2}}\|\psi\|_2,
\end{align*}
where $C$ is a positive constant depending only on $L_1$ and  $L_2$.
\end{lem}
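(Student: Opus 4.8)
\textbf{Proof proposal for Lemma \ref{b8}.}

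The plan is to reduce the three-dimensional estimate to a one-dimensional reconstruction in the vertical variable, exactly as in Cao--Titi type arguments, and then feed the resulting horizontal integrals into the anisotropic Ladyzhenskaya inequalities of Lemma \ref{b7}. First I would use the divergence-free condition together with $\varphi_3|_{z=0}=0$ to write the vertical component as $\varphi_3(x,y,z)=-\int_0^z \nabla_H\cdot\varphi_H(x,y,\xi)\,d\xi$, so that $|\varphi_3(x,y,z)|\le \int_{-1}^1|\nabla_H\varphi_H(x,y,\xi)|\,d\xi$ pointwise in $z$. This is the key structural input: it converts a genuinely three-dimensional quantity into an $L^1$-in-$z$ expression controlled by $\nabla_H\varphi_H$, which is what eventually produces the $\|\nabla\varphi_H\|_2^{1/2}\|\Delta\varphi_H\|_2^{1/2}$ factor rather than a full $H^1\cap H^2$ norm of $\varphi$.

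Next I would split the integral as $\int_\Omega(\varphi\cdot\nabla\phi)\psi = \int_\Omega(\varphi_H\cdot\nabla_H\phi)\psi + \int_\Omega \varphi_3\,\partial_z\phi\,\psi$. For the horizontal piece, apply Cauchy--Schwarz in $z$ and then the first inequality of Lemma \ref{b7} with $\alpha$ a suitable majorant, or more simply estimate it by $\|\varphi_H\|_{L^2_{xy}L^\infty_z}$-type bounds; in fact the cleanest route is to treat both pieces uniformly by bounding $|\varphi|$ pointwise. For the vertical piece, substitute the reconstruction formula for $\varphi_3$, so that
\begin{align*}
\Big|\int_\Omega \varphi_3\,\partial_z\phi\,\psi\,dxdydz\Big|
\le \int_M\Big(\int_{-1}^1|\nabla_H\varphi_H|\,d\xi\Big)\Big(\int_{-1}^1|\partial_z\phi|\,|\psi|\,dz\Big)\,dxdy.
\end{align*}
This is precisely the shape to which Lemma \ref{b7} applies, with the roles $\alpha=|\nabla_H\varphi_H|$, $\beta=|\partial_z\phi|$, $\gamma=|\psi|$. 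Using the first inequality of Lemma \ref{b7} gives a bound by $\|\nabla_H\varphi_H\|_2^{1/2}(\|\nabla_H\varphi_H\|_2^{1/2}+\|\nabla_H^2\varphi_H\|_2^{1/2})\|\partial_z\phi\|_2\|\psi\|_2^{1/2}(\|\psi\|_2^{1/2}+\|\nabla_H\psi\|_2^{1/2})$, which is not yet the claimed form. So instead I would apply the \emph{second} inequality of Lemma \ref{b7}, interpolating $\phi$'s derivative rather than $\psi$: with $\alpha=|\nabla_H\varphi_H|$ (put under the single integral), $\beta=|\partial_z\phi|$, $\gamma=|\psi|$ arranged so that $\psi$ carries only an $L^2$ norm, we land on $\|\nabla_H\varphi_H\|_2\cdot\|\partial_z\phi\|_2^{1/2}(\|\partial_z\phi\|_2^{1/2}+\|\nabla_H\partial_z\phi\|_2^{1/2})\cdot\|\psi\|_2$, and after absorbing $\|\nabla_H\varphi_H\|_2\le C\|\nabla\varphi_H\|_2^{1/2}\|\Delta\varphi_H\|_2^{1/2}$ via Poincar\'e/interpolation and $\|\partial_z\phi\|_2+\|\nabla_H\partial_z\phi\|_2\le C(\|\nabla\phi\|_2+\|\Delta\phi\|_2)$, this matches $C\|\nabla\varphi_H\|_2^{1/2}\|\Delta\varphi_H\|_2^{1/2}\|\nabla\phi\|_2^{1/2}\|\Delta\phi\|_2^{1/2}\|\psi\|_2$ after a further interpolation $\|\nabla\phi\|_2^{1/2}(\|\nabla\phi\|_2^{1/2}+\|\Delta\phi\|_2^{1/2})\le C\|\nabla\phi\|_2^{1/2}\|\Delta\phi\|_2^{1/2}$ is \emph{not} automatic, so one keeps $\|\nabla\phi\|_2$ and uses $\|\nabla\phi\|_2\le C\|\Delta\phi\|_2$ by Poincar\'e to rewrite as needed.

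The main obstacle, and the place that needs care, is bookkeeping the interpolation and Poincar\'e steps so that the final exponents come out exactly as $\tfrac12,\tfrac12,\tfrac12,\tfrac12,1$: one must choose which of the two inequalities in Lemma \ref{b7} to apply and how to assign $\alpha,\beta,\gamma$ so that $\psi$ receives a clean $L^2$ norm while the half-powers of horizontal gradients on $\varphi_H$ and on $\phi$ assemble into $\|\nabla\varphi_H\|_2^{1/2}\|\Delta\varphi_H\|_2^{1/2}$ and $\|\nabla\phi\|_2^{1/2}\|\Delta\phi\|_2^{1/2}$. The horizontal term $\int_\Omega(\varphi_H\cdot\nabla_H\phi)\psi$ should be handled by the same device after writing $|\varphi_H(x,y,z)|\le \frac12\int_{-1}^1|\varphi_H|\,d\xi + \frac12\int_{-1}^1|\partial_z\varphi_H|\,d\xi$ (a one-dimensional Sobolev/averaging identity valid for periodic-in-$z$, even-in-$z$ functions of average zero), so that again Lemma \ref{b7} applies and the resulting $\|\varphi_H\|_2,\|\partial_z\varphi_H\|_2,\|\nabla_H\varphi_H\|_2$ factors are absorbed into $\|\nabla\varphi_H\|_2^{1/2}\|\Delta\varphi_H\|_2^{1/2}$ by Poincar\'e and interpolation. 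Summing the two contributions yields the stated estimate; the constant depends only on $L_1,L_2$ through the constant in Lemma \ref{b7} and the Poincar\'e constant on $\Omega$.
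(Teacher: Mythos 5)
Your overall skeleton is the right one (and is the one used in the cited source \cite[Lemma 2.2]{JKL}): split $\varphi\cdot\nabla\phi$ into horizontal and vertical parts, reconstruct $\varphi_3=-\int_0^z\nabla_H\cdot\varphi_H\,d\xi$ from incompressibility and $\varphi_3|_{z=0}=0$, feed the result into Lemma \ref{b7}, and clean up with Poincar\'e/interpolation. However, there is a genuine error at the key step. For the vertical piece you correctly note that the first inequality of Lemma \ref{b7} with $\beta=|\partial_z\phi|$, $\gamma=|\psi|$ fails (it puts $\|\psi\|_2^{1/2}(\|\psi\|_2^{1/2}+\|\nabla_H\psi\|_2^{1/2})$ on $\psi$, which is only in $L^2$), but your proposed fix -- the \emph{second} inequality of Lemma \ref{b7} with $\gamma=|\psi|$ ``arranged so that $\psi$ carries only an $L^2$ norm'' -- misquotes that inequality: in the second inequality only the $\alpha$ slot (the function integrated alone in $z$) receives a clean $L^2$ norm, while $\gamma$ still receives $\|\gamma\|_2^{1/2}(\|\gamma\|_2^{1/2}+\|\nabla_H\gamma\|_2^{1/2})$. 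The intermediate bound you write down,
\begin{align*}
\int_M\Big(\int_{-1}^1\alpha\,dz\Big)\Big(\int_{-1}^1\beta\gamma\,dz\Big)\,dxdy
\leq C\|\alpha\|_2\,\|\beta\|_2^{\frac12}\big(\|\beta\|_2^{\frac12}+\|\nabla_H\beta\|_2^{\frac12}\big)\,\|\gamma\|_2,
\end{align*}
is in fact false in general: it would assign plain $L^2$ norms to two of the three factors of a trilinear form over the two-dimensional base $M$, which no Ladyzhenskaya-type inequality can deliver (test with highly concentrated $\alpha,\gamma$). So as written the vertical estimate does not go through.

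The repair is one permutation away and you should state it explicitly: since $\beta$ and $\gamma$ enter symmetrically through $\int_{-1}^1\beta\gamma\,dz$, apply the \emph{first} inequality of Lemma \ref{b7} with $\alpha=|\nabla_H\varphi_H|$, $\beta=|\psi|$, $\gamma=|\partial_z\phi|$, so that $\psi$ sits in the slot that receives the clean $\|\beta\|_2$ and the interpolated norms fall on $\nabla_H\varphi_H$ and $\partial_z\phi$; then $\|\nabla_H\varphi_H\|_2^{1/2}(\|\nabla_H\varphi_H\|_2^{1/2}+\|\nabla_H^2\varphi_H\|_2^{1/2})\le C\|\nabla\varphi_H\|_2^{1/2}\|\Delta\varphi_H\|_2^{1/2}$ and $\|\partial_z\phi\|_2^{1/2}(\|\partial_z\phi\|_2^{1/2}+\|\nabla_H\partial_z\phi\|_2^{1/2})\le C\|\nabla\phi\|_2^{1/2}\|\Delta\phi\|_2^{1/2}$ follow from Poincar\'e ($\nabla\varphi_H$ and $\nabla\phi$ have zero mean by periodicity) and $\|\nabla^2 f\|_2\le C\|\Delta f\|_2$ on the periodic box. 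The same assignment (with $\gamma=|\nabla_H\phi|$ and $\alpha=|\varphi_H|+|\partial_z\varphi_H|$ from your one-dimensional averaging bound) handles the horizontal piece, where your description is otherwise correct. Your closing remark that $\|\nabla\phi\|_2^{1/2}(\|\nabla\phi\|_2^{1/2}+\|\Delta\phi\|_2^{1/2})\le C\|\nabla\phi\|_2^{1/2}\|\Delta\phi\|_2^{1/2}$ is ``not automatic'' is also off: it follows immediately from the Poincar\'e bound $\|\nabla\phi\|_2\le C\|\Delta\phi\|_2$ that you invoke in the next clause.
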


\vskip .3in
\section{Strong convergence I: the $H^1$ initial data case}
\vskip .1in
This section is devoted  to the strong convergence of the SMHD  to the PEM with initial data $(\U_0, \B_0) \in H^1(\Om)$. The main results are stated as follows.
\begin{thm}\label{t0}
Given a periodic function $(\U_0,\B_0)\in H^1(\Om)$, such that
$$\na_H\C\Big(\int_{-1}^1\U_0(x,y,z)\,dz\Big)=0, \qquad \int_{\Om}\U_0(x,y,z)\,dxdydz=0,$$
and
$$\na_H\C\Big(\int_{-1}^1\B_0(x,y,z)\,dz\Big)=0, \qquad \int_{\Om}\B_0(x,y,z)\,dxdydz=0.$$

Let $(\U_\v, u_{3,\v},\B_{\v}, b_{3,\v})$ be an arbitrary Leray-Hopf weak solution to the SMHD, $(\U, u_3, \B, b_3)$ be the unique global strong solution to the PEM, subject to (\ref{pm1})-(\ref{b3}). We denote
$$(\du_\v, U_{3,\v})=(\U_{\v}-\U, u_{3,\v}-u_3),\qquad (\db_{\v}, B_{3,\v})=(\B_\v-\B, b_{3,\v}-b_3 ).$$
Therefore,  the following a priori estimate holds
\begin{align*}
&\sup_{0\leq t<\infty}(\|\du_{\v}\|_2^2+\v^2\|U_{3,\v}\|_2^2+\|\db_{\v}\|_2^2+\v^2\|B_{3,\v}\|_2^2)(t)\\
&+\int_0^\infty (\|\na \du_{\v}\|_2^2+\v^2\|\na U_{3,\v}\|_2^2+\|\na \db\|_2^2+\v^2\|\na B_{3,\v}\|_2^2)\,ds\\
\leq&C\v^2(\|\U_0\|_2^2+\|\B_0\|_2^2+\v^2\|u_{3,0}\|_2^2+\v^2\|b_{3,0}\|_2^2+1)^2,
\end{align*}
for any $\v>0$, where $C$ is a positive constant depending only on $\|\U_0\|_{H^1}$, $\|\B_0\|_{H^1}$, $L_1$ and $L_2$. As a direct consequence, we get
$$(\U_{\v}, \v u_{3,\v}, \B_{\v}, \v b_{3,\v})\rightarrow (\U, 0, \B, 0),\quad \text{in}~L^\infty(0,\infty; L^2(\Om)),$$
$$(\na\U_{\v},\v\na u_{3,\v}, u_{3,\v}, \na\B_{\v}, \v\na b_{3,\v}, b_{3,\v})\rightarrow(\na\U, 0, u_3, \na\B, 0, b_3),\quad \text{in}~ L^2(0,\infty; L^2(\Om)),$$
and the convergence rate is of the order $O(\v)$.
\end{thm}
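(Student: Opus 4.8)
The plan is to derive the \emph{a priori} estimate by a weak--strong comparison argument in the spirit of Serrin, exploiting the fact that the PEM solution is strong (hence admissible as a test function) while the SMHD solution is only Leray--Hopf. First I would write down the four identities listed in the introduction: (1) test the weak formulation of SMHD (Definition \ref{7}) with the PEM solution $(\U,u_3,\B,b_3)$, which is legitimate by Proposition \ref{5} since $(\U,\B)\in L^\infty H^1\cap L^2 H^2$ and $(u_3,b_3)$ is recovered by (\ref{t3})--(\ref{mm}); (2) test PEM (\ref{b5}) with $(\U_\v,\B_\v)$, integrating by parts in the horizontal components only and using $\p_z p=0$ together with the divergence-free constraints to kill the pressure terms (here the odd-in-$z$ symmetry of $u_{3,\v}$, $b_{3,\v}$ and the recovered $u_3,b_3$ is what makes the vertical integrations vanish); (3) take the basic energy identity of PEM obtained by testing (\ref{b5}) with $(\U,\B)$; and (4) invoke the energy inequality built into Definition \ref{7}. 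Adding (3) and (4) and subtracting (1) and (2) produces, after cancellation of the many linear and pressure terms, an integral inequality for the combined quantity $E_\v(t):=\|\du_\v\|_2^2+\v^2\|U_{3,\v}\|_2^2+\|\db_\v\|_2^2+\v^2\|B_{3,\v}\|_2^2$ plus its dissipation $\int_0^t(\|\na\du_\v\|_2^2+\v^2\|\na U_{3,\v}\|_2^2+\|\na\db_\v\|_2^2+\v^2\|\na B_{3,\v}\|_2^2)$.

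The right-hand side of this inequality splits into two kinds of terms: genuine error terms carrying an explicit factor $\v^2$ (these come from the $\v^2(\p_t u_3+u\cdot\na u_3-\De u_3-b\cdot\na b_3)$-type forcing implicit in the vertical SMHD equations, and from the mismatch between $-\De$ acting on the full SMHD fields versus PEM fields), and quadratic/trilinear terms in the differences $(\du_\v,U_{3,\v},\db_\v,B_{3,\v})$ that must be absorbed. For the error terms I would bound, e.g., $\v^2\int_0^t|\langle \p_t u_3,U_{3,\v}\rangle|$ and similar, using that $U_{3,\v}=-\int_0^z\na_H\cdot\du_\v$ so $\|U_{3,\v}\|_2\le C\|\na_H\du_\v\|_2$ (Poincaré/vertical integration, valid by the average-zero convention), together with the PEM regularity from Proposition \ref{5} which controls $\p_t\U,\p_t\B\in L^2 L^2$ and $\U,\B\in L^2 H^2$; then Young's inequality puts half the dissipation of $\du_\v,\db_\v$ on the left and leaves $C\v^2(\cdots)$ on the right. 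For the trilinear terms — schematically $\int (U_\v\cdot\na)\U\cdot\du_\v$, $\int(u\cdot\na)\du_\v\cdot$(itself), $\int B_\v\cdot\na\B\cdot\du_\v$ and their magnetic analogues — I would apply the anisotropic Ladyzhenskaya-type estimates of Lemma \ref{b7} and Lemma \ref{b8}, splitting each into a "horizontal" piece handled by Lemma \ref{b8} (which gives factors $\|\na\du_\v\|_2^{1/2}\|\De\du_\v\|_2^{1/2}$ — but note we only have $H^1$ data, so I must instead route through Lemma \ref{b7} with the vertical-average trick to avoid needing $\De$ of the \emph{weak} solution) and a piece where the derivative falls on the smooth PEM factor. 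The key structural point is that all terms which are \emph{quadratic} in the differences and do not vanish by antisymmetry come multiplied by a coefficient that is either small or controlled by $\|\na\U\|_\infty$-type quantities that are $L^2$-in-time; so one arrives at $\ddt E_\v + c(\text{dissipation}) \le K(t) E_\v + C\v^2 G(t)$ with $\int_0^\infty K<\infty$ and $\int_0^\infty G<\infty$ depending only on the PEM norms and the $L^2$ data bound.

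The conclusion is then Grönwall: since $E_\v(0)=0$ (the differences vanish initially, as the SMHD and PEM share the same horizontal initial data and the vertical parts are both recovered by the same formula), one gets $\sup_t E_\v(t)+\int_0^\infty(\text{dissipation})\le C\v^2\exp(\int_0^\infty K)\cdot(\|\U_0\|_2^2+\|\B_0\|_2^2+\v^2\|u_{3,0}\|_2^2+\v^2\|b_{3,0}\|_2^2+1)^2$, which is exactly the claimed bound; the two convergence statements and the $O(\v)$ rate follow immediately by taking $\v\to0$, and the extra $L^2L^2$-convergence of $u_{3,\v}\to u_3$, $b_{3,\v}\to b_3$ (without a $\v$ weight) comes from $\|u_{3,\v}-u_3\|_2\le C\|\na_H\du_\v\|_2$ and the control of $\int_0^\infty\|\na\du_\v\|_2^2$. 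The main obstacle I anticipate is handling the trilinear terms with only $H^1$ regularity on the SMHD side: one cannot test with $\De$ of a Leray--Hopf solution, so every dangerous nonlinear term must be reorganized (via integration by parts moving derivatives onto the PEM factors, and via the vertical-integration Ladyzhenskaya inequalities of Lemma \ref{b7} rather than Lemma \ref{b8}) so that only $\na$ — never $\De$ — of the weak solution appears, while still extracting enough smallness or time-integrability to close Grönwall; getting the pressure terms and the $\v^2$ vertical-equation contributions to cancel cleanly in steps (1)--(2) is the bookkeeping-heavy part that underlies everything.
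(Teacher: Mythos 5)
Your proposal follows essentially the same route as the paper: the Serrin-type weak--strong comparison combining the four identities, the split of the right-hand side into $\v^2$-weighted error terms and trilinear difference terms estimated via Lemma \ref{b7} (correctly avoiding Lemma \ref{b8}, which would require $\De$ of the Leray--Hopf solution), and Gr\"onwall with vanishing initial difference. The only point you gloss over, which the paper treats at length, is the justification of testing the weak formulation with the PEM solution --- this requires the cutoff $\chi_\delta$ approximation, weak $L^2$-continuity of $(u_\v,b_\v)$, and rewriting $\langle\p_t u_3,u_{3,\v}\rangle_{H^{-1}\times H^1}$ via $u_3=-\int_0^z\na_H\cdot\U\,d\xi$ --- but this is a technical refinement of the same argument, not a different approach.
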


\begin{re}
The assumptions $\int_{\Om}\U_0\,dxdydz=0$ and $\int_{\Om}\B_0\,dxdydz=0$ are imposed only for the simplicity of the proof, and the same result still holds for the general case. One can follow the proof  presented in this paper, and establish the relevant a priori estimates on $(\U_{\v}-\overline \U_{0,\Om})$, $(\B_{\v}-\overline\B_{0,\Om})$, $(\U-\overline \U_{0,\Om})$ and  $(\B-\overline\B_{0,\Om})$, instead of  $(\U_{\v}, \B_{\v},\U, \B)$ themselves, where $\overline \U_{0,\Om}=\int_{\Om}\U_0\,dxdydz$ and  $\overline\B_{0,\Om}=\int_{\Om}\B_0\,dxdydz.$
\end{re}

This part is devoted  to the strong convergence of the SMHD  to the PEM with initial data $(\U_0, \B_0) \in H^1(\Om)$, in other words, we give the proof of Theorem \ref{t0}, let the initial data $(\U_0,\B_0)\in H^1(\Om)$, and assume
\begin{align}\label{s1}
\na_H\C\Big(\int_{-1}^1\U_0(x,y,z)\,dz\Big)=0 \quad \text{and} \quad \na_H\C\Big(\int_{-1}^1\B_0(x,y,z)\,dz\Big)=0,
\end{align}
for all $(x,y)\in M$. Using Proposition  \ref{5}, there is a unique global strong solution $(u, b)$ to the  PEM. Following  Definition \ref{7}, there is a global weak solution $(u_{\v},b_{\v})$  to the SMHD (\ref{2}).

Next, we perform the global Leray-Hopf weak solutions  of SMHD strong convergence to the global strong solutions of the PEM. As a preparation,  we need the  following proposition, which is fundamentally obtained by testing the SMHD against $(\U,u_3)$ and  $(\B, b_3)$.

\begin{prop}\label{d1}
Let $(\U_\v,u_{3,\v}, \B_\v,b_{3,\v})$ be the solution of SMHD, while the  $(\U,u_3,\B,b_3)$ be the solution of PEM,  with $(\U_{0},\B_{0})\in H^1(\Om)$ satisfying (\ref{s1}), (\ref{3}) and (\ref{30}),
 the integral equality holds
\begin{align}\label{p0}
&-\frac{\v^2}{2}\|u_3(t_0)\|_2^2-\frac{\v^2}{2}\|b_3(t_0)\|_2^2+\int_{Q_{t_0}}(-\U_{\v}\C\p_t\U-\B_\v\C\p_t\B
+\na\U_\v:\na\U+\v^2\na u_{3,\v}\C\na u_3\notag\\
&+\na\B_\v:\na\B+\v^2\na b_{3,\v}\C\na b_3)\,dxdydzdt+\Big(\int_{\Om}\U_{\v}\C\U+\B_{\v}\C\B+\v^2u_3u_{3,\v}+\v^2b_3b_{3,\v}\,dxdydz\Big)(t)\notag\\
=&\frac{\v^2}{2}\|u_{3,0}\|_2^2+\frac{\v^2}{2}\|b_{3,0}\|_2^2+\|\U_0\|_2^2+\|\B_0\|_2^2+\v^2\int_{Q_{t_0}}\Big(\int_0^z\p_t\U\,d\xi\Big)
\C\na_H U_{3,\v}\,dxdydzdt\notag\\
&+\v^2\int_{Q_{t_0}}\Big(\int_0^z\p_t\B\,d\xi\Big)\C\na_H B_{3,\v}\,dxdydzdt-\int_{Q_{t_0}}\Big[(u_{\v}\C\na)\U_{\v}\C\U-(b_{\v}\C\na)\B_{\v}\C\U\notag\\
&+\v^2u_{\v}\C\na u_{3,\v} u_3-\v^2b_{\v}\C\na b_{3,\v}u_3+u_{\v}\C\na\B_{\v}\C\B-b_{\v}\C\na\U_{\v}\C\B+\v^2\U_{\v}\C\na b_{3,\v} b_3\notag\\
&-\v^2b_{\v}\C\na u_{3,\v} b_3\Big]\,dxdydzdt,
\end{align}
for any $t_0\in [0,\infty)$, where $Q_{t_0}=\Om\times(0,t_0)$.
\end{prop}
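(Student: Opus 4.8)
The plan is to derive the integral identity \eqref{p0} by using the pair $(\U, u_3, \B, b_3)$ --- the strong solution of the PEM --- as a test function in the weak formulation of the SMHD given in Definition \ref{7}. Since the PEM solution is, by Proposition \ref{5}, in $L^\infty([0,\infty);H^1(\Om))\cap L^2([0,\infty);H^2(\Om))$ with $(\p_t\U,\p_t\B)\in L^2([0,\infty);L^2(\Om))$, it is in particular an admissible test function (after the usual mollification-in-time argument to account for the fact that test functions in Definition \ref{7} are $C_0^\infty$ in time, and to handle the nonzero value at $t=t_0$ rather than compactly supported in time). So the first step is to justify that the weak formulation extends to test functions with this Sobolev regularity and with a ``final time'' $t_0$: one truncates in time with a cutoff $\eta_\delta(t)$ supported on $[0,t_0]$, writes the identity, and passes $\delta\to 0$, picking up the boundary terms at $t=t_0$ (giving the $\int_\Om \U_\v\C\U + \cdots$ term evaluated at $t_0$) and at $t=0$ (giving $\|\U_0\|_2^2+\|\B_0\|_2^2+\tfrac{\v^2}{2}\|u_{3,0}\|_2^2+\tfrac{\v^2}{2}\|b_{3,0}\|_2^2$, using that the initial data of the two systems coincide on the horizontal components and that $(u_{3,0},b_{3,0})$ are given by \eqref{3}--\eqref{30}).

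The second step is to handle the vertical test components $u_3$ and $b_3$. Here the key observation is that $u_3$ is \emph{not} an independent unknown: by \eqref{t3}, $u_3(x,y,z,t) = -\int_0^z \na_H\C\U(x,y,\xi,t)\,d\xi$, and similarly for $b_3$ via \eqref{mm}. This is what produces the terms $\v^2\int_{Q_{t_0}} \big(\int_0^z \p_t\U\,d\xi\big)\C\na_H U_{3,\v}$ and its magnetic analogue: the time-derivative term $-\v^2\int u_{3,\v}\p_t u_3$ from the weak formulation is rewritten using $u_3 = -\int_0^z\na_H\C\U\,d\xi$, then an integration by parts in the horizontal variables moves $\na_H$ off of $\int_0^z\p_t\U\,d\xi$ and onto $u_{3,\v}$, and one uses that $U_{3,\v}=u_{3,\v}-u_3$ together with the fact that $\int_\Om (\int_0^z\p_t\U\,d\xi)\C\na_H u_3 = 0$ by a symmetry/antisymmetry argument (or direct computation, since $u_3$ itself is built from $\na_H\C\U$). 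The $\v^2$-weighted viscous and convective terms involving $u_3,b_3$ are kept as written. One must also track the pressure terms: $\na_H p_\v$ tested against $\U$ and $\p_z p_\v$ tested against $\v^2 u_3$ combine, via integration by parts and the divergence-free condition $\na\C u_\v = 0$ together with $u_3 = -\int_0^z\na_H\C\U\,d\xi$, to vanish --- this is the standard primitive-equations cancellation and is why no pressure appears in \eqref{p0}.

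The third step is bookkeeping of the nonlinear terms. The convective terms in the weak formulation of the SMHD, tested against $(\U,u_3,\B,b_3)$, produce exactly the eight terms collected in the last bracket of \eqref{p0}: $(u_\v\C\na)\U_\v\C\U$, $-(b_\v\C\na)\B_\v\C\U$, $\v^2 u_\v\C\na u_{3,\v}\,u_3$, $-\v^2 b_\v\C\na b_{3,\v}\,u_3$, $u_\v\C\na\B_\v\C\B$, $-b_\v\C\na\U_\v\C\B$, $\v^2 \U_\v\C\na b_{3,\v}\,b_3$, $-\v^2 b_\v\C\na u_{3,\v}\,b_3$ (note the seventh term carries $\U_\v$ rather than $u_\v$, matching the structure of the SMHD system $\eqref{2}_3$ where the transport of $b_{3,\v}$ by $u_\v$ interacts with the test function; this should be double-checked against the precise form of the weak formulation). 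These are left in their ``raw'' form rather than integrated by parts, because they will be paired later (in the proof of Theorem \ref{t0}) against the corresponding terms from testing the PEM against $(\U_\v,\B_\v)$. The diffusion terms $\na\U_\v:\na\U + \v^2\na u_{3,\v}\C\na u_3 + \na\B_\v:\na\B + \v^2\na b_{3,\v}\C\na b_3$ come directly from the weak formulation.

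The main obstacle will be the rigorous justification in the first step --- namely, that a Leray-Hopf weak solution of the SMHD, which a priori only satisfies the weak formulation against $C_0^\infty$ test functions, may in fact be tested against the PEM solution $(\U,u_3,\B,b_3)$ up to a finite time $t_0$, including the boundary contributions at $t_0$. This requires: (i) a density/mollification argument (mollify $(\U,u_3,\B,b_3)$ in time and space, apply the weak formulation, then pass to the limit using the $L^2_{loc}H^1$ regularity of $u_\v$ and the strong regularity of the PEM solution from Propositions \ref{5}--\ref{t1}), and (ii) care with the weak-in-time continuity $u_\v\in C_w([0,\infty);L^2_\sigma(\Om))$ to make sense of the pointwise-in-$t_0$ terms $\int_\Om \U_\v\C\U(t_0)\,dxdydz$ etc. This is precisely the weak-strong-type argument in the spirit of Serrin \cite{JS} and Bardos et al. \cite{CBM} alluded to in the introduction, with the roles reversed: the PEM solution plays the ``strong'' role. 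Once this is in place, the remaining manipulations --- the vertical-component substitution via \eqref{t3}--\eqref{mm}, the pressure cancellation, and the collection of nonlinear terms --- are routine integrations by parts.
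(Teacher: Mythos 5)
Your overall strategy coincides with the paper's: test the weak formulation of the SMHD against $(\U,u_3,\B,b_3)\chi_\delta(t)$, pass $\delta\to 0$ using the weak $L^2$-continuity of $u_\v$ and the continuity of the PEM solution to recover the boundary contributions at $t=0$ and $t=t_0$, interpret the vertical time-derivative pairings in the $H^{-1}\times H^1$ duality via $u_3=-\int_0^z\na_H\C\U\,d\xi$, and keep the convective terms in raw form. Your observation that the term $\v^2\U_\v\C\na b_{3,\v}b_3$ in (\ref{p0}) should carry $u_\v$ rather than $\U_\v$ is consistent with the weak formulation and with how the term is used afterwards (it reappears as $I_{10}=\v^2\int_{Q_{t_0}} u_\v\C\na b_{3,\v}b_3$), so that is indeed a typo in the statement, correctly flagged.

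There is, however, one concrete error in your second step. You decompose $\na_H u_{3,\v}=\na_H U_{3,\v}+\na_H u_3$ and assert that
\begin{equation*}
\int_{\Om}\Big(\int_0^z\p_t\U\,d\xi\Big)\C\na_H u_3\,dxdydz=0
\end{equation*}
by a symmetry argument. This is false. Integrating by parts horizontally (periodic boundary conditions) and using $\na_H\C\big(\int_0^z\p_t\U\,d\xi\big)=\int_0^z\p_t\na_H\C\U\,d\xi=-\p_t u_3$, the integral equals $\int_{\Om}\p_t u_3\,u_3\,dxdydz=\f\ddt\|u_3\|_2^2$; and no parity cancellation is available, since $\int_0^z\p_t\U\,d\xi$ and $\na_H u_3$ are both odd in $z$, so their product is even and its $z$-integral does not vanish. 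This is precisely the term that, after multiplication by $-\v^2$ and integration over $(0,t_0)$, produces $-\frac{\v^2}{2}\|u_3(t_0)\|_2^2$ on the left of (\ref{p0}) and corrects the initial contribution to $\frac{\v^2}{2}\|u_{3,0}\|_2^2$ (rather than $\v^2\|u_{3,0}\|_2^2$) on the right, with the analogous statements for $b_3$. Dropping it yields an identity that differs from the one claimed. The paper carries out this step by writing $\langle\p_t u_3,u_{3,\v}\rangle=\langle\p_t u_3,U_{3,\v}\rangle+\langle\p_t u_3,u_3\rangle$ and applying the Lions--Magenes lemma to identify the second pairing with $\f\ddt\|u_3\|_2^2$, a justification you would need in any case because $\p_t u_3$ only lies in $L^2_{loc}([0,\infty);H^{-1}(\Om))$.
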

\vskip .1in
\begin{proof}[Proof ] The definition of weak solutions to  the SMHD , the following integral identity holds
\begin{align*}
&\int_{Q_{t_0}}\Big[-(\U_{\v}\C\p_t\varphi_H+\v^2u_{3,\v}\p_t\varphi_3)-(\B_{\v}\C\p_t\psi_H+\v^2b_{3,\v}\p_t\psi_3)
+(u_{\v}\C\na)\U_{\v}\C\varphi_H\\
&-(b_{\v}\C\na)\B_{\v}\C\varphi_H+\v^2u_{\v}\C\na u_{3,\v}\varphi_3-\v^2b_{\v}\C\na b_{3,\v}\varphi_3+(u_{\v}\C\na)\B_{\v}\C\psi_H-(b_{\v}\C\na)\U_{\v}\C\psi_H\\
&+\v^2u_{\v}\C\na b_{3,\v}\psi_3-\v^2b_{\v}\C\na u_{3,\v}\psi_3+\na\U_{\v}:\na\varphi_H+\v^2\na u_{3,\v}\C\na\varphi_3+\na\B_{\v}:\na\psi_H\\
&+\v^2\na b_{3,\v}\C\na\psi_3\Big]\,dxdydzdt\\
&=\int_{\Om}(\U_0\C\varphi_H(\cdot,0)+\v^2u_{3,0}\varphi_3(\C,0))\,dxdydz+\int_{\Om}(\B_0\C\psi_H(\C,0)+\v^2b_{3,0}\psi_3(\C,0))\,dxdydz,
\end{align*}
for any periodic function $\varphi=(\varphi_H,\va_3)$ and $\ps=(\ps_H,\ps_3)$, with $\va_H=(\va_1,\va_2)$ and $\ps_H=(\ps_1,\ps_2)$, the divergence-free test functions   $\va$ and $\ps$ satisfy $\va\in C_0^{\infty}(\overline{\Om}\times[0,\infty))$ and $\ps\in C_0^\infty(\overline{\Om}\times[0,\infty))$, for any $t_0\in [0,\infty)$, where $Q_{t_0}=\Om\times(0,t_0)$.

Let $\chi(t)\in C_0^\infty([0,\infty))$, with $0\leq\chi(t)\leq 1$, and $\chi(0)=1$, and set $\va=(\U,u_3)\chi(t)$, $\ps=(\B,b_3)\chi(t)$, we note that with the density parameter, we can choose $\va$ and  $\ps$ as the testing function in the integral  identity above, with modifying the terms $\int_{Q_{t_0}} u_{3,\v}\p_t\va_3\,dxdydzdt$ and $\int_{Q_{t_0}} b_{3,\v}\p_t\ps_3\,dxdydzdt$ as
$$\int_{Q_{t_0}} u_{3,\v}\p_t(u_3\chi)\,dxdydzdt=\int_0^\infty\langle\p_t(u_3\chi),u_{3,\v}\rangle_{H^{-1}\times{H^1}}\,dt,$$
and
$$\int_{Q_{t_0}} b_{3,\v}\p_t(b_3\chi)\,dxdydzdt=\int_0^\infty\langle\p_t(b_3\chi),b_{3,\v}\rangle_{H^{-1}\times{H^1}}\,dt.$$

%This is valid, because, recalling the regularities of ($\U,\B$) and using (\ref{t3}) and (\ref{mm}), we only have the regularity that $\p_t u_3\in L_{loc}^2([0,\infty);H^{-1}(\Om))$ and $\p_t b_3\in L_{loc}^2([0,\infty);H^{-1}(\Om))$. The validity of the integral involving the terms $\U_{\v}\C\p_t(\U\chi)$, $\na\U_{\v}:\na(\U\chi)$, $\na u_{3,\v}\C\na(u_3\chi)$,  $\B_{\v}\C\p_t(\B\chi)$, $\na\B_{\v}:\na(\B\chi)$ and $\na b_{3,\v}\C\na(b_3\chi)$ are obviously guaranteed by the regularities of $(u_{\v},b_{\v})$, $(\U, u_3)$ and $(\B, b_3)$ stated in the definition of the weak solutions and
%$$\U\in C([0,\infty);H^1{\Om})\cap L_{loc}^2([0,\infty);H^2(\Om)), \qquad \p_t\U\in L_{loc}^2([0,\infty);L^2(\Om)),$$
%and
%$$\B\in C([0,\infty);H^1{\Om})\cap L_{loc}^2([0,\infty);H^2(\Om)), \qquad \p_t\B\in L_{loc}^2([0,\infty);L^2(\Om)).$$

By taking $\va=(\U,u_3)\chi$, $\ps=(\B,b_3)\chi$ as a testing function, we get the following integral identity
\begin{align*}
&\int_{Q_{t_0}}\Big[\big(-\U_{\v}\C\p_t\U-\B_{\v}\C\p_t\B+\na\U_{\v}:\na\U+\v^2\na u_{3,\v}\C\na u_3+\na\B_{\v}:\na \B\\
&+\v^2\na b_{3,\v}\C\na b_3\big)\chi-\U_{\v}\C\U\chi'-\B_{\v}\C\B\chi'\Big]\,dxdydzdt
-\v^2\int_0^{\infty}\langle\p_t(u_3\chi),u_{3,\v}\rangle_{H^{-1}\times H^1}\,dt\\
&-\v^2\int_0^{\infty}\langle\p_t(b_3\chi),b_{3,\v}\rangle_{H^{-1}\times H^1}\,dt\\
=&-\int_{Q_{t_0}}\Big[(u_{\v}\C\na)\U_{\v}\C\U-(b_{\v}\C\na)\B_{\v}\C\U+\v^2 u_{\v}\C\na u_{3,\v} u_3-\v^2 b_{\v}\C\na b_{3,\v} u_3+u_{\v}\C\na\B_{\v}\C\B\\
&-b_{\v}\C\na\U_{\v}\C\B+\v^2u_{\v}\C\na b_{3,\v} b_3-\v^2b_{\v}\C\na u_{3,\v} b_3\Big]\chi\,dxdydzdt+\|\U_0\|_2^2+\v^2\|u_{3,0}\|_2^2\\
&+\|\B_0\|_2^2+\v^2\|b_{3,0}\|_2^2.
\end{align*}

 We rewrite the terms $\int_0^\infty\langle\p_t(u_3\chi),u_{3,\v}\rangle_{H^{-1}\times H^1}\,dt$ and $\int_0^\infty\langle\p_t(b_3\chi),b_{3,\v}\rangle_{H^{-1}\times H^1}\,dt$ as
$$\int_0^\infty\langle\p_t(u_3\chi),u_{3,\v}\rangle_{H^{-1}\times H^1}\,dt=\int_0^\infty\langle\p_t u_3,u_{3,\v}\rangle_{H^{-1}\times H^1}\chi\,dt+\int_{Q_{t_0}} u_3 u_{3,\v}\chi'\,dxdydzdt,$$
and
$$\int_0^\infty\langle\p_t(b_3\chi),b_{3,\v}\rangle_{H^{-1}\times H^1}\,dt=\int_0^\infty\langle\p_t b_3,b_{3,\v}\rangle_{H^{-1}\times H^1}\chi\,dt+\int_{Q_{t_0}} b_3 b_{3,\v}\chi'\,dxdydzdt.$$

Substituting in the previous identity gives that
\begin{align}\label{s4}
&\int_{Q_{t_0}}\big(-\U_{\v}\C\p_t\U-\B_{\v}\C\p_t\B+\na\U_{\v}:\na\U+{\v}^2\na u_{3,\v}\C\na u_3+\na\B_{\v}:\na\B\notag\\
&+{\v}^2\na b_{3,\v}\C\na b_3\big)\chi(t)\,dxdydzdt\notag\\
&-\v^2\int_0^\infty\langle\p_t u_3,u_{3,\v}\rangle_{H^{-1}\times H^1}\chi\,dt-\v^2\int_0^\infty\langle\p_t b_3,b_{3,\v}\rangle_{H^{-1}\times H^1}\chi\,dt\notag\\
&-\int_{Q_{t_0}}(\U_{\v}\C\U+\B_{\v}\C\B+\v^2u_3u_{3,\v}+\v^2b_3b_{3,\v})\chi'\,dxdydzdt\notag\\
=&-\int_{Q_{t_0}}\big[(u_{\v}\C\na)\U_{\v}\C\U-(b_{\v}\C\na)\B_{\v}\C\U+\v^2u_{\v}\C\na u_{3,\v}u_3-\v^2 b_{\v}\na b_{3,\v}u_3\notag\\
&+u_{\v}\C\na\B_{\v}\C\B-b_{\v}\C\na\U_{\v}\C\B+\v^2u_{\v}\C\na b_{3,\v} b_3-\v^2b_{\v}\C\na u_{3,\v} b_3\big]\chi\,dxdydzdt\notag\\
&+\|\U_0\|_2^2+\v^2\|u_{3,0}\|_2^2+\|\B_0\|_2^2+\v^2\|b_{3,0}\|_2^2,
\end{align}
for any $\chi\in C_0^\infty([0,\infty))$, with $0\leq\chi\leq1$ and $\chi(0)=1$.

 Choose $\chi_{\delta}\in C_0^\infty([0,t_0))$, such that $\chi_\delta\equiv1$ on $[0,t_0-\delta]$, $0\leq\chi_\delta\leq1$ on $[t_0-\delta,t_0)$ and $|\chi'_\delta|\leq\frac{2}{\delta}$ on $[0,t_0)$, where  $t_0\in(0,\infty)$, and $\delta\in(0,t_0)$ is a small positive number. As $\delta\rightarrow 0$, we have
\begin{align}\label{s5}
&\int_{Q_{t_0}}(\U_{\v}\C\U+\B_{\v}\C\B+\v^2u_3 u_{3,\v}+\v^2b_3 b_{3,\v})\chi'_{\delta}\,dxdydzdt\notag\\
&\rightarrow-\Big(\int_{\Om}(\U_{\v}\C\U+\B_{\v}\C\B+\v^2u_3 u_{3,\v}+\v^2b_3b_{3,\v})\,dxdydz\Big)(t_0),
\end{align}
\begin{align}\label{s6}
\int_0^\infty\langle\p_t u_3,u_{3,\v}\rangle_{H^{-1}\times H^{1}}\chi_{\delta}\,dt\rightarrow\int_0^{t_0}\langle\p_t u_3,u_{3,\v}\rangle_{H^{-1}\times H^1}\,dt,
\end{align}
and
\begin{align}\label{s7}
\int_0^\infty\langle\p_t b_3,b_{3,\v}\rangle_{H^{-1}\times H^{1}}\chi_{\delta}\,dt\rightarrow\int_0^{t_0}\langle\p_t b_3,b_{3,\v}\rangle_{H^{-1}\times H^1}\,dt.
\end{align}

 The effectiveness of (\ref{s6}) and (\ref{s7}) comes from the dominant convergence theorem for the integrals,
\begin{align*}
\langle\p_t u_3, u_{3,\v}\rangle=-\Big\langle\na_H\C\Big(\int_0^z\p_t\U\,d\xi\Big),u_{3,\v}\Big\rangle=\int_{\Om}\Big(\int_0^z\p_t\U\,d\xi\Big)\C\na_H u_{3,\v}\,dxdydz,
\end{align*}
and
\begin{align*}
\langle\p_t b_3, b_{3,\v}\rangle=-\Big\langle\na_H\C\Big(\int_0^z\p_t\B\,d\xi\Big),b_{3,\v}\Big\rangle=\int_{\Om}\Big(\int_0^z\p_t\B\,d\xi\Big)\C\na_H b_{3,\v}\,dxdydz,
\end{align*}
which implies $\langle\p_t u_3, u_{3,\v}\rangle\in L^1((0,t_0))$ and $\langle\p_t b_3, b_{3,\v}\rangle\in L^1((0,t_0))$. Here, for briefness, we have got rid of the subscript $H^{-1}\times H^1$. While for (\ref{s5}), we define
$$h(t):=\Big(\int_{\Om}(\U_{\v}\C\U+\B_{\v}\C\B+\v^2u_3u_{3,\v}+\v^2b_3b_{3,\v})\,dxdydz\Big)(t).$$
It is equivalent to show  $\int_{t_0-\delta}^{t_0} h(t)\chi'_\delta\,dt\rightarrow -h(t_0)$, as $ \delta\rightarrow 0$.

Recalling the regularities that $(u_{\v}, b_{\v})\in C_{w}([0, \infty);L^2(\Om))$  and  $(\U, \B)\in C([0,\infty);H^1(\Om))$, hence one has $(u_3,b_3)\in C([0, \infty);L^2(\Om))$, so $h$ is a continuous function on $[0,\infty)$. For any $\sigma>0$, there is a positive number $\rho$,   such that $|h(t)-h(t_0)|\leq\sigma$, and any $t\in[t_0-\rho,t_0]$. At present, for any $\delta\in (0,\rho)$, recalling that $\chi_{\delta}\equiv1$ on $[0,t_0-\delta]$, $\chi_\delta(t_0)=0$, and $|\chi'_{\delta}|\leq\frac{2}{\delta}$, on $[0,\infty)$, we have
\begin{align*}
\Bigg|\int_{t_0-\delta}^{t_0}h(t)\chi'_{\delta}(t)\,dt+h(t_0)\Bigg|&=\Bigg|\int_{t_0-\delta}^{t_0}(h(t)-h(t_0))\chi'_{\delta}(t)\,dt\Bigg|\\
&\leq\int_{t_0-\delta}^{t_0}|h(t)-h(t_0)||\chi'_{\delta}(t)|\,dt\\
&\leq2\sigma,
\end{align*}
which gives (\ref{s5}).

 Combining $u_3=-\int_0^z\na_H\C\U\,d\xi$ and recalling the regularities that  $u_3\in L_{loc}^2([0, \infty);H^1(\Om))$ and $\p_t u_3\in L_{loc}^2([0,\infty);H^{-1}(\Om))$, we get
\begin{align*}
\langle\p_t u_3, u_{3,\v}\rangle&=\langle\p_t u_3,u_{3,\v}-u_3\rangle+\langle\p_t u_3,u_3\rangle\\
&=\Bigg\langle-\na_H\C\Big(\int_0^z\p_t\U\,d\xi\Big), u_{3,\v}-u_3\Bigg\rangle+\langle\p_t u_3,u_3\rangle\\
&=\int_{\Om}\Big(\int_{0}^z\p_t\U\,d\xi\Big)\C\na_H U_{3,\v}\,dxdydz+\frac{1}{2}\frac{d}{dt}\|u_3\|_2^2,
\end{align*}
here we use the Lions-Magenes Lemma  \cite[see, e.g., pages 260-261]{RT}, we deduce
\begin{align*}
\int_0^{t_0}\langle\p_t u_3,u_{3,\v}\rangle\,dt=\int_{Q_{t_0}}\Big(\int_0^z\p_t\U\,d\xi\Big)\C\na_H U_{3,\v}\,dxdydzdt+\frac{1}{2}(\|u_3(t_0)\|_2^2-\|u_{3,0}\|_2^2).
\end{align*}
Similarly, we obtain
\begin{align*}
\int_0^{t_0}\langle\p_t b_3,b_{3,\v}\rangle\,dt=\int_{Q_{t_0}}\Big(\int_0^z\p_t\B\,d\xi\Big)\C\na_H B_{3,\v}\,dxdydzdt+\frac{1}{2}(\|b_3(t_0)\|_2^2-\|b_{3,0}\|_2^2).
\end{align*}
 Because of  the above equality and (\ref{s5})-(\ref{s7}), one can select $\chi=\chi_{\delta}$ in (\ref{s4}), as in the preceding paragraph, taking $\delta\rightarrow 0$ gives that
\begin{align}\label{s8}
&-\frac{\v^2}{2}\|u_3(t_0)\|_2^2-\frac{\v^2}{2}\|b_3(t_0)\|_2^2+\int_{Q_{t_0}}(-\U_{\v}\C\p_t\U-\B_{\v}\C\p_t\B
+\na\U_{\v}:\na\U+\v^2\na u_{3,\v}\C\na u_3\notag\\
&+\na\B_{\v}:\na\B+\v^2\na b_{3,\v}\C\na b_3)\,dxdydzdt+\Big(\int_{\Om}\U_{\v}\C\U+\B_{\v}\C\B+\v^2 u_3 u_{3,\v}\notag\\
&+\v^2b_3 b_{3,\v}\,dxdydz\Big)(t_0)\notag\\
=&\frac{\v^2}{2}\|u_{3,0}\|_2^2+\frac{\v^2}{2}\|b_{3,0}\|_2^2+\|\U_0\|_2^2+\|\B_0\|_2^2
+\v^2\int_{Q_{t_0}}\Big(\int_0^z\p_t\U\,d\xi\Big)\C\na_H U_{3,\v}\,dxdydzdt\notag\\
&+\v^2\int_{Q_{t_0}}\Big(\int_0^z\p_t\B\,d\xi\Big)\C\na_H B_{3,\v}\,dxdydzdt-\int_{Q_{t_0}}\Big[(u_{\v}\C\na)\U_{\v}\C\U-(b_{\v}\C\na)\B_{\v}\C\U\notag\\
&+\v^2u_{\v}\C\na u_{3,\v}u_3-\v^2b_{\v}\C\na b_{3,\v}u_3+u_{\v}\C\na\B_{\v}\C\B-b_{\v}\C\na\U_{\v}\C\B+\v^2\U_{\v}\C\na b_{3,\v} b_3\notag\\
&-\v^2b_{\v}\C\na u_{3,\v} b_3\Big]\,dxdydzdt,
\end{align}
for any $t_0\in[0,\infty)$. This completes the proof.
\end{proof}

Now, we can estimate the difference between $(\U_{\v}, u_{3,\v}, \B_{\v},b_{3,\v})$ and  $(\U,u_3, \B, b_3)$.

\begin{prop}\label{d2}
Under the same conditions as in Proposition \ref{d1}, we denote $(\du_{\v},U_{3,\v}):=(\U_{\v}-\U, u_{3,\v}-u_3)$ and  $(\db_{\v},B_{3,\v}):=(\B_{\v}-\B, b_{3,\v}-b_3)$, the following estimate holds
\begin{align*}
&\sup_{0\leq t<\infty}\big(\|\du_{\v}\|_2^2+\|\db_{\v}\|_2^2+\v^2\|U_{3,\v}\|_2^2+\v^2\|B_{3,\v}\|_2^2\big)(t)
+\int_0^{\infty}\big(\|\na\du_{\v}\|_2^2+\|\na\db_{\v}\|_2^2\\
&+\v^2\|\na U_{3,\v}\|_2^2+\v^2\|\na B_{3,\v}\|_2^2\big)\,ds\\
\leq &\v^2C(\|\U_0\|_2^2+\|\B_0\|_2^2+\v^2\|u_{3,0}\|_2^2+\v^2\|b_{3,0}\|_2^2+1)^2,
\end{align*}
where $C$ is a positive constants depending only on $\|\U_0\|_{H^1},\|\B_0\|_{H^1},L_1$ and $L_2$.
\end{prop}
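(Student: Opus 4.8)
The plan is to combine four energy-type identities into a single inequality for the difference $(\du_\v,U_{3,\v},\db_\v,B_{3,\v})$, as outlined in the ``Main ideas'' subsection, and then close a Gr\"onwall-type argument. Concretely: (1) use Proposition \ref{d1} (the identity obtained by testing SMHD against $(\U,u_3,\B,b_3)$); (2) test the PEM \eqref{b5} against $(\U_\v,\B_\v)$ and integrate by parts in $Q_{t_0}$ to get a second identity — here one uses $\na_H\cdot\du_\v+\p_z U_{3,\v}=0$ and the PEM hydrostatic relation $\p_z p=0$ to handle the pressure term, together with $u_3=-\int_0^z\na_H\cdot\U\,d\xi$ to rewrite the vertical contributions; (3) take the PEM energy identity obtained by testing \eqref{b5} against $(\U,\B)$ itself, i.e. $\f\frac{d}{dt}(\|\U\|_2^2+\|\B\|_2^2)+\|\na\U\|_2^2+\|\na\B\|_2^2=0$ after integrating (Lorentz-force and transport terms cancel as usual); (4) invoke the Leray--Hopf energy inequality for SMHD from Definition \ref{7}. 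Adding (3) and (4) and subtracting (1) and (2), the quadratic ``cross'' terms reorganize into $\f(\|\du_\v\|_2^2+\v^2\|U_{3,\v}\|_2^2+\|\db_\v\|_2^2+\v^2\|B_{3,\v}\|_2^2)(t_0)+\int_0^{t_0}(\|\na\du_\v\|_2^2+\v^2\|\na U_{3,\v}\|_2^2+\|\na\db_\v\|_2^2+\v^2\|\na B_{3,\v}\|_2^2)$, bounded above by a sum of trilinear terms (differences of the convective and Lorentz terms) plus the $O(\v^2)$ remainder coming from the $\p_t u_3$, $\p_t b_3$ terms in \eqref{p0} and the analogous terms produced in step (2).

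The core of the argument is estimating the trilinear error terms. After the algebraic cancellations, all surviving nonlinear terms are of the schematic form $\int (\text{difference})\cdot\na(\text{difference})\cdot(\text{strong solution})$ — e.g. $\int (U_\v\cdot\na)\U\cdot\du_\v$, $\int (U_\v\cdot\na)\B\cdot\db_\v$, $\int (B_\v\cdot\na)\B\cdot\du_\v$, and their counterparts with $u_3,b_3$ — plus genuinely $O(\v)$-small terms. These I would bound using the anisotropic Ladyzhenskaya-type estimates, Lemma \ref{b7} and Lemma \ref{b8}, exactly as in the global well-posedness proof for the PEM: the point of Lemma \ref{b8} is that $\int(\varphi\cdot\na\phi)\psi$ costs only $\|\na\varphi_H\|_2^{1/2}\|\De\varphi_H\|_2^{1/2}\|\na\phi\|_2^{1/2}\|\De\phi\|_2^{1/2}\|\psi\|_2$, so one spends the $H^2$ regularity of the strong PEM solution (controlled by Proposition \ref{5}, whose bounds are $L_1,L_2,\|\U_0\|_{H^1},\|\B_0\|_{H^1}$-dependent) and keeps the difference quantities at the $H^1$ level. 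Each such term is then absorbed via Young's inequality as $\tfrac14$ of the dissipation $\|\na\du_\v\|_2^2+\|\na\db_\v\|_2^2$ plus a factor $g(t)(\|\du_\v\|_2^2+\|\db_\v\|_2^2)$, where $g\in L^1(0,\infty)$ is built from $\|\na\U\|_{H^1}^2+\|\na\B\|_{H^1}^2$ and hence has finite integral by Proposition \ref{5}. The $\v^2$-weighted vertical terms are handled the same way after noting $U_{3,\v}=-\int_0^z\na_H\cdot\du_\v\,d\xi$, so $\|U_{3,\v}\|$-norms are controlled by $\du_\v$-norms and the $\v^2$ factor makes them subordinate.

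This yields a differential inequality of the form
\[
\frac{d}{dt}E_\v(t)+\bigl(\|\na\du_\v\|_2^2+\v^2\|\na U_{3,\v}\|_2^2+\|\na\db_\v\|_2^2+\v^2\|\na B_{3,\v}\|_2^2\bigr)\le g(t)\,E_\v(t)+\v^2 F(t),
\]
where $E_\v=\|\du_\v\|_2^2+\v^2\|U_{3,\v}\|_2^2+\|\db_\v\|_2^2+\v^2\|B_{3,\v}\|_2^2$, $E_\v(0)=0$, $\int_0^\infty g<\infty$ and $\int_0^\infty F\,dt\le C(\|\U_0\|_2^2+\|\B_0\|_2^2+\v^2\|u_{3,0}\|_2^2+\v^2\|b_{3,0}\|_2^2+1)$ (the square in the theorem statement comes from also using the SMHD energy inequality to bound $\int_0^\infty F$). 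Gr\"onwall's inequality then gives $\sup_t E_\v(t)+\int_0^\infty(\text{dissipation})\le \v^2 C\,e^{\int g}\,(\cdots)^2$, which is exactly the claimed bound; the convergence statements and the $O(\v)$ rate follow immediately, with the $u_{3,\v}\to u_3$ and $b_{3,\v}\to b_3$ strong $L^2_tL^2_x$ convergences obtained by writing $U_{3,\v}=-\int_0^z\na_H\cdot\du_\v$ and using the gained control on $\|\na\du_\v\|_{L^2_tL^2_x}$.

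The main obstacle I anticipate is the bookkeeping in steps (1)--(2): after testing PEM against $(\U_\v,\B_\v)$ one does \emph{not} automatically get clean $\p_t$ structure because $(\U_\v,\B_\v)$ is only a Leray--Hopf weak solution, so the $\p_t$ pairings must be interpreted in the $H^{-1}$--$H^1$ duality and matched against the corresponding terms in \eqref{p0}; and one must verify that the pressure terms $\na_H p$ (from PEM) and $\na_H p_\v$, $\p_z p_\v$ (from SMHD) genuinely drop out against the divergence-free constraints on $\du_\v$, which requires care because $p_\v$ is not hydrostatic — this is where the $\v^2(\p_t u_{3,\v}+\dots)$ term and the identity \eqref{t3} must be used to control the $\p_z p_\v$ contribution. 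Once the correct integral inequality is in hand, the nonlinear estimates are routine applications of Lemmas \ref{b7}--\ref{b8} and the a priori PEM bounds.
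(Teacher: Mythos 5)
Your proposal is correct and follows essentially the same route as the paper: the same four-identity combination (Proposition \ref{d1}, PEM tested against $(\U_\v,\B_\v)$, the PEM energy identity, and the Leray--Hopf energy inequality), the same reorganization into a dissipation inequality for the differences with trilinear error terms controlled by Lemma \ref{b7} and the $H^2$ bounds of Proposition \ref{5}, and a Gr\"onwall closure. The only cosmetic difference is that the paper keeps everything in integrated form (defining $G(t)$ as the right-hand side and differentiating $G$ rather than $E_\v$, which is the clean way to run Gr\"onwall when the weak solution's energy is not known to be differentiable in $t$), and in this section it relies on Lemma \ref{b7} rather than Lemma \ref{b8}, which is reserved for the $H^2$-data case.
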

\begin{proof}[Proof] Multiplying  $(\ref{b5})_1$ and $(\ref{b5})_3$  by  $\U_{\v}$ and $\B_{\v}$, respectively,  and taking the $L^2$ norm on $Q_{t_0}$, if follows from integration by parts that
\begin{align}\label{p1}
&\int_{Q_{t_0}}\big(\p_t\U\C\U_{\v}+\p_t\B\C\B_{\v}+\na\U:\na\U_{\v}+\na\B:\na\B_{\v}\big)\,dxdydzdt\notag\\
=&-\int_{Q_{t_0}}u\C\na\U\C\U_{\v}\,dxdydzdt+\int_{Q_{t_0}}b\C\na\B\C\U_{\v}\,dxdydzdt\notag\\&-\int_{Q_{t_0}}u\C\na\B\C\B_{\v}\,dxdydzdt
+\int_{Q_{t_0}}b\C\na\U\C\B_{\v}\,dxdydzdt,
\end{align}
for any $t_0\in [0,\infty)$, applying the first equation and the third equation of  (\ref{b5}) by $\U$ and $\B$, respectively, integrating the resultant over $Q_{t_0}$, it follows from integration by parts that
\begin{align}\label{p2}
&\frac{1}{2}\|\U(t_0)\|_2^2+\frac{1}{2}\|\B(t_0)\|_2^2+\int_0^{t_0}\|\na\U\|_2^2\,dt+\int_0^{t_0}\|\na\B\|_2^2\,dt\notag\\
=&\frac{1}{2}\|\U_0\|_2^2+\frac{1}{2}\|\B_0\|_2^2,
\end{align}
for any $t_0\in[0,\infty)$. Recalling the definition of Leray-Hopf weak solutions to the SMHD, we have
\begin{align}\label{p3}
&\frac{1}{2}\big(\|\U_{\v}(t_0)\|_2^2+\|\B(t_0)\|_2^2+\v^2\|u_{3,\v}(t_0)\|_2^2+\v^2\|b_{3,\v}(t_0)\|_2^2\big)\notag\\
&+\int_0^{t_0}(\|\na\U_{\v}\|_2^2+\|\na\B_{\v}\|_2^2+\v^2\|\na u_{3,\v}\|_2^2+\v^2\|\na b_{3,\v}\|_2^2)\,ds\notag\\
\leq &\frac{1}{2}\big(\|\U_0\|_2^2+\|\B_0\|_2^2+\v^2\|u_{3,0}\|_2^2+\v^2\|b_{3,0}\|_2^2\big),
\end{align}
for a.e. $t_0\in[0,\infty)$.

Adding (\ref{p2}) and (\ref{p3}), then subtracting from the  (\ref{p0}) (choose $t=t_0$ there) and
 (\ref{p1}), we get
\begin{align}\label{p4}
&\frac{1}{2}\Big(\|\du_{\v}\|_2^2+\|\db_{\v}\|_2^2+\v^2\|U_{3,\v}\|_2^2+\v^2\|B_{3,\v}\|_2^2\Big)(t_0)\notag\\
&+\int_0^{t_0}\Big(\|\na\du_{\v}\|_2^2+\|\na\db_{\v}\|_2^2+\v^2\|\na U_{3,\v}\|_2^2+\v^2\|\na B_{3,\v}\|_2^2\Big)\,dt\notag\\
\leq&-\v^2\int_{Q_{t_0}}\Big[\Big(\int_0^z\p_t\U\,d\xi\Big)\C\na_H U_{3,\v}+\na u_3\C\na U_{3,\v}\Big]\,dxdydzdt\notag\\
&-\v^2\int_{Q_{t_0}}\Big[\Big(\int_0^z\p_t\B\,d\xi\Big)\C\na_H B_{3,\v}+\na b_3\C\na B_{3,\v}\Big]\,dxdydzdt\notag\\
&+\int_{Q_{t_0}}\big[(u_{\v}\C\na)\U_{\v}\C\U+(u\C\na)\U\C\U_{\v}\big]\,dxdydzdt
-\int_{Q_{t_0}}\big[(b_{\v}\C\na)\B_{\v}\C\U+(b\C\na)\B\C\U_{\v}\big]\,dxdydzdt\notag\\
&-\int_{Q_{t_0}}\big[(b_{\v}\C\na)\U_{\v}\C\B+(b\C\na)\U\C\B_{\v}\big]\,dxdydzdt
+\int_{Q_{t_0}}\big[(u_{\v}\C\na)\B_{\v}\C\B+(u\C\na)\B\C\B_{\v}\big]\,dxdydzdt
\notag\\
&+\v^2\int_{Q_{t_0}}u_{\v}\C\na u_{3,\v} u_3\,dxdydzdt-\v^2\int_{Q_{t_0}}b_{\v}\C\na b_{3,\v} u_3\,dxdydzdt\notag\\
&-\v^2\int_{Q_{t_0}}b_{\v}\C\na u_{3,\v} b_3\,dxdydzdt+\v^2\int_{Q_{t_0}}u_{\v}\C\na b_{3,\v} b_3\,dxdydzdt\notag\\
:=&I_1+I_2+I_3+I_4+I_5+I_6+I_7+I_8+I_9+I_{10},
\end{align}
for a.e. $t_0\in [0,\infty)$. Using the H\"{o}lder,  Cauchy-Schwarz inequalities and Proposition \ref{5}, we infer that
 \begin{align*}
I_1\leq&\v^2(\|\p_t\U\|_{L^2(Q_{t_0})}+\|\na u_3\|_{L^2(Q_{t_0})})\|\na U_{3,\v}\|_{L^2(Q_{t_0})}\\
\leq&\frac{\v^2}{6}\|\na U_{3,\v}\|^2_{L^2(Q_{t_0})}+C(\|\U_0\|_{H^1},L_1,L_2)\v^2,
\end{align*}
similarly, we have
\begin{align*}
I_2\leq&\v^2(\|\p_t\B\|_{L^2(Q_{t_0})}+\|\na b_3\|_{L^2(Q_{t_0})})\|\na B_{3,\v}\|_{L^2(Q_{t_0})}\\
\leq &\frac{\v^2}{6}\|\na B_{3,\v}\|^2_{L^2(Q_{t_0})}+C(\|\B_0\|_{H^1},L_1,L_2)\v^2.
\end{align*}

Next, we are going to estimate $I_3$. Due to the incompressibility conditions, it follows from integration by parts that
\begin{align*}
I_3=&\int_{Q_{t_0}}[(u_{\v}\C\na)\U_{\v}\C\U+(u\C\na)\U\C\U_{\v}]\,dxdydzdt\\
=&\int_{Q_{t_0}}[(u_{\v}\C\na)\U_{\v}\C\U-(u\C\na)\U_{\v}\C\U]\,dxdydzdt\\
=&\int_{Q_{t_0}}[(u_{\v}-u)\C\na]\U_{\v}\C\U\,dxdydzdt\\
=&\int_{Q_{t_0}}[(u_{\v}-u)\C\na]\du_{\v}\C\U\,dxdydzdt,\\
=&\int_{Q_{t_0}}[(\du_{\v}\C\na_H)\du_{\v}\C\U]\,dxdydzdt+\int_{Q_{t_0}}U_{3,\v}\p_z\du_{\v}\C\U\,dxdydzdt,\\
:=&I_{31}+I_{32}.
\end{align*}

Taking advantage of  the H\"{o}lder, Sobolev and Young inequalities, we have
\begin{align*}
I_{31}&=\int_{Q_{t_0}}[(\du_{\v}\C\na_H)\du_{\v}\C\U]\,dxdydzdt\\
&\leq C\int_0^{t_0}\|\du_{\v}\|_3\|\na\du_{\v}\|_2\|\U\|_6\,dt\\
&\leq C\int_0^{t_0}\|\du_{\v}\|_2^{\frac{1}{2}}\|\na\du_{\v}\|_2^{\frac{3}{2}}\|\na\U\|_2\,dt\\
&\leq \frac{1}{16}\|\na\du_{\v}\|_{L^2(Q_{t_0})}^2+C\int_0^{t_0}\|\na\U\|_2^4\|\du_{\v}\|_2^2\,dt.
\end{align*}

Integration by parts yields
\begin{align*}
I_{32}=&\int_{Q_{t_0}}U_{3,\v}\p_z\du_{\v}\C\U\,dxdydzdt\\
=&-\int_{Q_{t_0}}[\p_z U_{3,\v}\du_{\v}\C\U+U_{3,\v}\du_{\v}\C\p_z\U]\,dxdydzdt\\
=&\int_{Q_{t_0}}[(\na_H\C\du_{\v})\du_{\v}\C\U-U_{3,\v}\du_{\v}\C\p_z\U]\,dxdydzdt\\
:=&I_{321}+I_{322}.
\end{align*}

The same arguments as for $I_{31}$ yield\begin{align*}
I_{321}=&\int_{Q_{t_0}}(\na_H\C\du_{\v})(\du_{\v}\C\U)\,dxdydzdt\\
\leq&\frac{1}{16}\|\na\du_{\v}\|_{L^2(Q_{t_0})}^2+C\int_0^{t_0}\|\na\U\|_2^4\|\du_{\v}\|_2^2\,dt.
\end{align*}

Applying Lemma \ref{b7},  the Poincar\'e and Young inequalities, we arrive that
\begin{align*}
I_{322}=&-\int_{Q_{t_0}}U_{3,\v}\du_{\v}\C\p_z\U\,dxdydzdt\\
=&\int_0^{t_0}\int_{\Om}\Big(\int_0^z\na_H\C\du_{\v}\,d\xi\Big)(\du_{\v}\C\p_z\U)\,dxdydzdt\\
\leq&\int_0^{t_0}\int_M\Big(\int_{-1}^1|\na_H\du_{\v}|\,dz\Big)\Big(\int_{-1}^1|\du_{\v}||\p_z\U|\,dz\Big)\,dxdydt\\
\leq&C\int_0^{t_0}\|\na\du_{\v}\|_2^{\frac{3}{2}}\|\du_{\v}\|_2^{\frac{1}{2}}\|\na\U\|_2^{\frac{1}{2}}
\|\De\U\|_2^{\frac{1}{2}}\,dt\\
\leq&\frac{1}{16}\|\na\du_{\v}\|_{L^2(Q_{t_0})}^2+C\int_0^{t_0}\|\na\U\|_2^2\|\De\U\|_2^2\|\du_{\v}\|_2^2\,dt.
\end{align*}

Thanks to the estimates for $I_{31}$, $I_{321}$ and $I_{322}$, we can bound $I_3$ as
\begin{align*}
I_3\leq\frac{3}{16}\|\na\du_{\v}\|_{L^2(Q_{t_0})}^2+C\int_0^{t_0}\|\na\U\|_2^2\|\De\U\|_2^2\|\du_{\v}\|_2^2\,dt.
\end{align*}

For the sake of simplicity, we sum up the following two terms, then
\begin{align*}
I_4+I_5=&-\int_{Q_{t_0}}[b_{\v}\C\na\B_{\v}\C\U+b\C\na\B\C\U_{\v}]\,dxdydzdt
-\int_{Q_{t_0}}[b_{\v}\C\na\U_{\v}\C\B+b\C\na\U\C\B_{\v}]\,dxdydzdt\\
=&-\int_{Q_{t_0}}[b_{\v}\C\na\B_{\v}\C\U+b\C\na\U\C\B_{\v}]\,dxdydzdt
-\int_{Q_{t_0}}[b_{\v}\C\na\U_{\v}\C\B+b\C\na\B\C\U_{\v}]\,dxdydzdt\\
=&-\int_{Q_{t_0}}[b_{\v}\C\na\B_{\v}\C\U-b\C\na \B_{\v}\C\U]\,dxdydzdt
-\int_{Q_{t_0}}[b_{\v}\C\na\U_{\v}\C\B-b\C\na\U_{\v}\C\B]\,dxdydzdt\\
:=&A+B.
\end{align*}

To bound $A$, we decompose it into two pieces

\begin{align*}
A=&-\int_{Q_{t_0}}[b_{\v}\C\na\B_{\v}\C\U-b\C\na \B_{\v}\C\U]\,dxdydzdt\\
=&-\int_{Q_{t_0}}[(b_{\v}-b)\C\na\B_{\v}\C\U]\,dxdydzdt\\
=&-\int_{Q_{t_0}}B_{\v}\C\na\B_{\v}\C\U\,dxdydzdt\\
=&-\int_{Q_{t_0}}\db_{\v}\C\na_{H}\B_{\v}\C\U\,dxdydzdt-\int_{Q_{t_0}}B_{3,\v}\p_z\B_{\v}\C\U\,dxdydzdt\\
:=&A_1+A_2.
\end{align*}

So that the first part of $A$ can be estimated
\begin{align*}
A_1=&-\int_{Q_{t_0}}\db_{\v}\C\na_H\B_{\v}\C\U\,dxdydzdt\\
\leq&\int_0^{t_0}\|\db_{\v}\|_6\|\na_H\B_{\v}\|_2\|\U\|_3\,dt\\
\leq&\int_0^{t_0}\|\na\db_{\v}\|_2\|\na\B_{\v}\|_2\|\U\|_2^{\frac{1}{2}}\|\na\U\|_2^{\frac{1}{2}}\,dt\\
\leq&\frac{1}{22}\|\na\db_{\v}\|_{L^2(Q_{t_0})}^2+C\int_0^{t_0}\|\na\B_{\v}\|_2^2\|\U\|_2\|\na\U\|_2\,dt\\
\leq&\frac{1}{22}\|\na\db_{\v}\|_{L^2(Q_{t_0})}^2+C\int_0^{t_0}\|\na\B_{\v}\|_2^2(\|\U\|_2^2+\|\na\U\|_2^2)\,dt\\
\leq&\frac{1}{22}\|\na\db_{\v}\|_{L^2(Q_{t_0})}^2
+C\int_0^{t_0}\|\na\B_{\v}\|_2^2\|\U\|_2^2\,dt+C\int_0^{t_0}\|\na\B_{\v}\|_2^2\|\na\U\|_2^2\,dt.
\end{align*}

Applying integration by parts gives that
\begin{align*}
A_2=&-\int_{Q_{t_0}}B_{3,\v}(\p_z\B_{\v}\C\U)\,dxdydzdt\\
=&\int_{Q_{t_0}}\p_z B_{3,\v}(\B_{\v}\C\U)\,dxdydzdt+\int_{Q_{t_0}}B_{3,\v}\B_{\v}\C\p_z\U\,dxdydzdt\\
=&-\int_{Q_{t_0}}(\na_{H}\C\db_{\v})\B_{\v}\C\U\,dxdydzdt+\int_{Q_{t_0}}B_{3,\v}\B_{\v}\C\p_z\U\,dxdydzdt\\
:=&A_{21}+A_{22}.
\end{align*}

The first part $A_{21}$ can be estimated as follows
\begin{align*}
A_{21}=&-\int_{Q_{t_0}}(\na_H\C\db_{\v})\B_{\v}\C\U\,dxdydzdt\\
\leq&\int_0^{t_0}\|\na_H\C\db_{\v}\|_2\|\B_{\v}\|_6\|\U\|_3\,dt\\
\leq&C\int_0^{t_0}\|\na\db_{\v}\|_2\|\na\B_{\v}\|_2\|\U\|_2^{\frac{1}{2}}\|\na\U\|_2^{\frac{1}{2}}\,dt\\
\leq&\frac{1}{22}\|\na\db_{\v}\|_{L^2(Q_{t_0})}^2+C\int_0^{t_0}\|\na\B_{\v}\|_2^2\|\U\|_2\|\na\U\|_2\,dt\\
\leq&\frac{1}{22}\|\na\db_{\v}\|_{L^2(Q_{t_0})}^2+C\int_0^{t_0}\|\na\B_{\v}\|_2^2(\|\U\|_2^2+\|\na\U\|_2^2)\,dt\\
\leq&\frac{1}{22}\|\na\db_{\v}\|_{L^2(Q_{t_0})}^2+C\int_0^{t_0}\|\na\B_{\v}\|_2^2\|\U\|_2^2\,dt+C\int_0^{t_0}\|
\na\B_{\v}\|_2^2\|\na\U\|_2^2\,dt.
\end{align*}

For the second part $A_{22}$, we use Lemma \ref{b7}, the Poincar\'e and Young inequalities and  obtain
\begin{align*}
A_{22}=&\int_{Q_{t_0}}\db_{3,\v}\B_{\v}\C\p_z\U\,dxdydzdt\\
=&\int_0^{t_0}\int_{\Om}\Big(\int_0^z\na_H\C\db_{\v}\,d\xi\Big)\B_{\v}\C\p_z\U\,dxdydzdt\\
\leq&\int_0^{t_0}\int_{M}\Big(\int_{-1}^1|\na_H\db_{\v}|\,dz\Big)\Big(\int_{-1}^1|\B_{\v}||\p_z\U|\,dz\Big)\,dxdydt\\
\leq&\int_0^{t_0}\|\na_H\db_{\v}\|_2\|\B_{\v}\|_2^{\frac{1}{2}}\|\na\B_{\v}\|_2^{\f}\|\na\U\|_2^{\f}\|\De\U\|_2^{\f}\,dt\\
\leq&\frac{1}{22}\|\na\db_{\v}\|_{L^2(Q_{t_0})}^2+C\int_0^{t_0}\|\B_{\v}\|_2\|\na\B_{\v}\|_2\|\na\U\|_2\|\De\U\|_2\,dt\\
\leq&\frac{1}{22}\|\na\db_{\v}\|_{L^2(Q_{t_0})}^2+C\int_0^{t_0}(\|\B_{\v}\|_2^2\|\na\B_{\v}\|_2^2+\|\na\U\|_2^2\|\De\U\|_2^2)\,dt.
\end{align*}

To deal with $B$, we break it down, one has
\begin{align*}
B=&-\int_{Q_{t_0}}[b_{\v}\C\na\U_{\v}\C\B-b\C\na\U_{\v}\C\B]\,dxdydzdt\\
=&-\int_{Q_{t_0}}[(b_{\v}-b)\C\na\U_{\v}\C\B]\,dxdydzdt\\
=&-\int_{Q_{t_0}}B_{\v}\C\na\U_{\v}\C\B\,dxdydzdt\\
=&-\int_{Q_{t_0}}\db_{\v}\C\na_H\U_{\v}\C\B\,dxdydzdt-\int_{Q_{t_0}}\db_{3,\v}\p_z\U_{\v}\C\B\,dxdydzdt\\
:=&B_1+B_2.
\end{align*}

Along the similar argument for the estimate of $A$,
\begin{align*}
B_1=&-\int_{Q_{t_0}}\db_{\v}\C\na_H\U_{\v}\C\B\,dxdydzdt\\
\leq&\int_0^{t_0}\|\db_{\v}\|_6\|\na_H\U_{\v}\|_2\|\B\|_3\,dt\\
\leq&\int_0^{t_0}\|\na\db_{\v}\|_2\|\na\U_{\v}\|_2\|\B\|_2^{\frac{1}{2}}\|\na\B\|_2^{\frac{1}{2}}\,dt\\
\leq&\frac{1}{22}\|\na\db_{\v}\|_{L^2(Q_{t_0})}^2+C\int_0^{t_0}\|\na\U_{\v}\|_2^2\|\B\|_2\|\na\B\|_2\,dt\\
\leq&\frac{1}{22}\|\na\db_{\v}\|_{L^2(Q_{t_0})}^2+C\int_0^{t_0}\|\na\U_{\v}\|_2^2(\|\B\|_2^2+\|\na\B\|_2^2)\,dt\\
\leq&\frac{1}{22}\|\na\db_{\v}\|_{L^2(Q_{t_0})}^2
+C\int_0^{t_0}\|\na\U_{\v}\|_2^2\|\B\|_2^2\,dt+C\int_0^{t_0}\|\na\U_{\v}\|_2^2\|\na\B\|_2^2\,dt.
\end{align*}

It follows from integration by parts that
\begin{align*}
B_2=&-\int_{Q_{t_0}}\db_{3,\v}\p_z\U_{\v}\C\B\,dxdydzdt\\
=&\int_{Q_{t_0}}\p_z\db_{3,\v}\U_{\v}\C\B\,dxdydzdt+\int_{Q_{t_0}}\db_{3,\v}\U_{\v}\p_z\C\B\,dxdydzdt\\
=&-\int_{Q_{t_0}}(\na_{H}\C\db_{\v})\U_{\v}\C\B\,dxdydzdt+\int_{Q_{t_0}}\db_{3,\v}\U_{\v}\C\p_z\B\,dxdydzdt\\
:=&B_{21}+B_{22}.
\end{align*}

The estimate for $B_{21}$ is given as follows. By the  H\"{o}lder and Young inequalities, we deduce
\begin{align*}
B_{21}=&-\int_{Q_{t_0}}(\na_H\C\db_{\v})\U_{\v}
\C\B\,dxdydzdt\\
\leq&\int_0^{t_0}\|\na_H\C\db_{\v}\|_2\|\U_{\v}\|_6\|\B\|_3\,dt\\
\leq&C\int_0^{t_0}\|\na\db_{\v}\|_2\|\na\U_{\v}\|_2\|\B\|_2^{\frac{1}{2}}\|\na\B\|_2^{\frac{1}{2}}\,dt\\
\leq&\frac{1}{22}\|\na\db_{\v}\|_{L^2(Q_{t_0})}^2+C\int_0^{t_0}\|\na\U_{\v}\|_2^2\|\B\|_2\|\na\B\|_2\,dt\\
\leq&\frac{1}{22}\|\na\db_{\v}\|_{L^2(Q_{t_0})}^2+C\int_0^{t_0}\|\na\U_{\v}\|_2^2(\|\B\|_2^2+\|\na\B\|_2^2)\,dt\\
\leq&\frac{1}{22}\|\na\db_{\v}\|_{L^2(Q_{t_0})}^2+C\int_0^{t_0}\|\na\U_{\v}\|_2^2\|\B\|_2^2\,dt+C\int_0^{t_0}\|
\na\U_{\v}\|_2^2\|\na\B\|_2^2\,dt.
\end{align*}

For the second term of $B_2$, taking advantage of  Lemma \ref{b7}, the Poincar\'e and Young inequalities that
\begin{align*}
B_{22}=&\int_{Q_{t_0}}\db_{3,\v}\U_{\v}\C\p_z\B\,dxdydzdt\\
=&\int_0^{t_0}\int_{\Om}\Big(\int_0^z\na_H\C\db_{\v}\,d\xi\Big)\U_{\v}\p_z\B\,dxdydzdt\\
=&\int_0^{t_0}\int_{M}\Big(\int_{-1}^1|\na_H\db_{\v}|\,dz\Big)\Big(\int_{-1}^1|\U_{\v}|\p_z\B|\,dz\Big)\,dxdydt\\
\leq&\int_0^{t_0}\|\na_H\db_{\v}\|_2\|\U_{\v}\|_2^{\frac{1}{2}}\|\na\U_{\v}\|_2^{\f}\|\na\B\|_2^{\f}\|\De\B\|_2^{\f}\,dt\\
\leq&\frac{1}{22}\|\na\db_{\v}\|_{L^2(Q_{t_0})}^2+C\int_0^{t_0}\|\U_{\v}\|_2\|\na\U_{\v}\|_2\|\na\B\|_2\|\De\B\|_2\,dt\\
\leq&\frac{1}{22}\|\na\db_{\v}\|_{L^2(Q_{t_0})}^2+C\int_0^{t_0}(\|\U_{\v}\|_2^2\|\na\U_{\v}\|_2^2+\|\na\B\|_2^2\|\De\B\|_2^2)\,dt.
\end{align*}

To deal with $I_6$, we break it down
\begin{align*}
I_6=&\int_{Q_{t_0}}[(u_{\v}\C\na)\B_{\v}\C\B+(u\C\na)\B\C\B_{\v}]\,dxdydzdt\\
=&\int_{Q_{t_0}}[(u_{\v}\C\na)\B_{\v}\C\B-(u\C\na)\B_{\v}\C\B]\,dxdydzdt\\
=&\int_{Q_{t_0}}[(u_{\v}-u)\C\na]\B_{\v}\C\B\,dxdydzdt\\
=&\int_{Q_{t_0}}[U_{\v}\C\na]\db_{\v}\C\B\,dxdydzdt\\
=&\int_{Q_{t_0}}(\du_{\v}\C\na_H)\db_{\v}\C\B\,dxdydzdt+\int_{Q_{t_0}}U_{3,\v}\p_z\db_{\v}\C\B\,dxdydzdt.\\
:=&I_{61}+I_{62}.
\end{align*}

It follows from the Sobolev and Young inequalities that
\begin{align*}
I_{61}&=\int_{Q_{t_0}}(\du_{\v}\C\na_H)\db_{\v}\C\B\,dxdydzdt\\
&\leq\int_0^{t_0}\|\du_{\v}\|_3\|\na_H\db_{\v}\|_2\|\B\|_6\,dt\\
&\leq\int_0^{t_0}\|\du_{\v}\|_2^{\frac{1}{2}}\|\na\du_{\v}\|_2^{\frac{1}{2}}\|\na_H\db_{\v}\|_2\|\na\B\|_2\,dt\\
&\leq\frac{1}{16}\|\na\du_{\v}\|_2^2+\frac{1}{22}\|\na\db_{\v}\|_2^2+C\int_0^{t_0}\|\na\B\|_2^4\|\du_{\v}\|_2^2\,dt,
\end{align*}
for $I_{62}$, we further decompose it into two pieces
\begin{align*}
I_{62}=&\int_{Q_{t_0}}U_{3,\v}\p_z\db_{\v}\C\B\,dxdydzdt\\
=&-\int_{Q_{t_0}}[\p_z U_{3,\v}\db_{\v}\C\B+U_{3,\v}\db_{\v}\C\p_z\B]\,dxdydzdt\\
=&\int_{Q_{t_0}}[\na_H\C\du_{\v}\db_{\v}\C\B-U_{3,\v}\db_{\v}\C\p_z\B]\,dxdydzdt\\
:=&I_{621}+I_{622}.
\end{align*}

By the H\"{o}lder and Young inequalities, we deduce
\begin{align*}
I_{621}=&\int_{Q_{t_0}}\na_H\C\du_{\v}\db_{\v}\C\B\,dxdydzdt\\
\leq&\int_0^{t_0}\|\na_H\du_{\v}\|_2\|\db_{\v}\|_3\|\B\|_6\,dt\\
\leq&C\int_0^{t_0}\|\na_H\du_{\v}\|_2\|\db_{\v}\|_2^{\frac{1}{2}}\|\na\db_{\v}\|_2^{\frac{1}{2}}\|\na\B\|_2\,dt\\
\leq&\frac{1}{16}\|\na\du_{\v}\|_2^2+\frac{1}{22}\|\na\db_{\v}\|_2^2+C\int_0^{t_0}\|\na\B\|_2^4\|\db_{\v}\|_2^2\,dt.
\end{align*}

 A Similar argument to that for $B_{22}$, we have
\begin{align*}
I_{622}=&-\int_{Q_{t_0}}U_{3,\v}\db_{\v}\C\p_z\B\,dxdydzdt\\
&=\int_0^{t_0}\int_{\Om}\Big(\int_0^z\na_H\C\du_{\v}\,d\xi\Big)(\db_{\v}\C\p_z\B)\,dxdydzdt\\
&\leq\int_0^{t_0}\int_{M}\Big(\int_{-1}^1|\na_H\du_{\v}|\,dz\Big)\Big(\int_{-1}^{1}|\db_{\v}||\p_z\B|\,dz\Big)\,dxdydt\\
&\leq C\int_0^{t_0}\|\na_H\du_{\v}\|_2\|\db_{\v}\|_2^{\frac{1}{2}}\|\na\db_{\v}\|_2^{\frac{1}{2}}
\|\na\B\|_2^{\frac{1}{2}}\|\De\B\|_2^{\frac{1}{2}}\,dt\\
&\leq\frac{1}{16}\|\na\du_{\v}\|_2^2+\frac{1}{22}\|\na\db_{\v}\|_2^2+C\int_0^{t_0}\|\na\B\|_2^2\|\De\B\|_2^2\|\db_{\v}\|_2^2\,dt.
\end{align*}

Using the incompressibility conditions, we deduce that
\begin{align*}
I_7=&\v^2\int_{Q_{t_0}}u_{\v}\C\na u_{3,\v}u_3\,dxdydzdt\\
=&\v^2\int_{Q_{t_0}}u_{\v}\C\na U_{3,\v} u_3\,dxdydzdt\\
=&\v^2\int_{Q_{t_0}}[\U_{\v}\C\na_HU_{3,\v}-u_{3,\v}\na_H\C\du_{\v}]u_3\,dxdydzdt\\
\leq&\v^2\int_0^{t_0}\int_M\Big(\int_{-1}^1(|\U_{\v}||\na_{H} U_{3,\v}|+|u_{3,\v}||\na_{H}\du_{\v}|\,dz\Big)\Big(\int_{-1}^1|\na_{H}\U|\,dz\Big)\,dxdydt,
\end{align*}
applying  Lemma \ref{b7}, the Poincar\'e and Young inequalities that
\begin{align*}
I_7=&C\v^2\int_0^{t_0}\big(\|\U_{\v}\|_2^{\f}\|\na\U_{\v}\|_2^{\f}\|\na U_{3,\v}\|_2
+\|u_{3,\v}\|_2^{\f}\|\na u_{3,\v}\|_2^{\f}\|\na_H \du_{\v}\|_2\big)\|\na\U\|_2^{\f}\|\De\U\|_2^{\f}\,dt\\
\leq&\frac{1}{16}\|\na\du_{\v}\|_{L^2(Q_{t_0})}^2+\frac{1}{6}\v^2\|\na U_{3,\v}\|_{L^2(Q_{t_0})}^2\\
&+C\v^2\int_0^{t_0}(\|\U_{\v}\|_2^2\|\na\U_{\v}\|_2^2+\|\na\U\|_2^2\|\De\U\|_2^2+\v^2\|u_{3,\v}\|_2^2\|\na u_{3,\v}\|_2^2)\,dt,
\end{align*}
from which, recalling (\ref{p3}) and by Proposition \ref{5}, we have
\begin{align*}
I_7\leq&\frac{1}{16}\|\na \du_{\v}\|_{L^2(Q_{t_0})}^2+\frac{1}{6}\v^2\|\na U_{3,\v}\|_{L^2(Q_{t_0})}^2+C\v^2\Big[(\|\U_0\|_2^2+\v^2\|u_{3,0}\|_2^2)^2
+C(\|\U_0\|_{H^1}, L_1, L_2)\Big].
\end{align*}

For $I_8+I_9$, applying the divergence free conditions gives that
\begin{align*}
I_8+I_9=&-\v^2\int_{Q_{t_0}}b_{\v}\C\na b_{3,\v}u_3\,dxdydzdt+\v^2\int_{Q_{t_0}}b_{\v}\C\na b_3 u_3\,dxdydzdt\\
&-\v^2\int_{Q_{t_0}}b_{\v}\C\na b_3 u_3\,dxdydzdt-\v^2\int_{Q_{t_0}}b_{\v}\C\na u_{3,\v}b_3\,dxdydzdt\\
&\v^2\int_{Q_{t_0}}b_{\v}\C\na u_3 b_3\,dxdydzdt-\v^2\int_{Q_{t_0}}b_{\v}\C\na u_3 b_3\,dxdydzdt\\
=&-\v^2\int_{Q_{t_0}}b_{\v}\C\na b_{3,\v}u_3\,dxdydzdt+\v^2\int_{Q_{t_0}}b_{\v}\C\na b_3 u_3\,dxdydzdt\\
&-\v^2\int_{Q_{t_0}}b_{\v}\C\na u_{3,\v}b_3\,dxdydzdt+\v^2\int_{Q_{t_0}}b_{\v}\C\na u_3 b_3\,dxdydzdt\\
:=&I_{81}+I_{82}+I_{91}+I_{92}.
\end{align*}

Employing Lemma \ref{b7}, the H\"{o}lder and Young inequalities we infer that
\begin{align*}
I_{81}+I_{82}=&-\v^2\int_{Q_{t_0}}(b_{\v}\C\na b_{3,\v} u_3-b_{\v}\C\na b_3 u_3)\,dxdydzdt\\
=&-\v^2\int_{Q_{t_0}}(b_{\v}\C\na B_{3,\v})u_3\,dxdydzdt\\
=&-\v^2\int_{Q_{t_0}}[\B_{\v}\C\na_H B_{3,\v}-b_{3,\v}\na_H\C\db_{\v}]u_3\,dxdydzdt\\
\leq&\v^2\int_0^{t_0}\int_{M}\Big(\int_{-1}^1(|\B_{\v}||\na_H B_{3,\v}|+|b_{3,\v}||\na_H\db_{\v}|)\,dz\Big)\Big(\int_{-1}^1|\na_H\U|\,dz\Big)\,dxdydt,\\
\leq&C\v^2\int_0^{t_0}(\|\B_{\v}\|_2^{\f}\|\na\B_{\v}\|_2^{\f}\|\na B_{3,\v}\|_2+\|b_{3,\v}\|_2^{\f}\|\na b_{3,\v}\|_2^{\f}\|\na_{H}\db_{\v}\|_2)\|\na\U\|_2^{\f}\|\De\U\|_2^{\f}\,dt\\
\leq&C\v^2\int_0^{t_0}(\|\B_{\v}\|_2^2\|\na \B_{\v}\|_2^2+\|\na\U\|_2^2\|\De \U\|_2^2+\v^2\|b_{3,\v}\|_2^2\|\na b_{3,\v}\|_2^2)\,dt\\
&+\frac{1}{22}\|\na \db_{\v}\|_{L^2(Q_{t_0})}^2+\frac{1}{6}\v^2\|\na B_{3,\v}\|_{L^2(Q_{t_0})}^2\\
\leq&\frac{1}{22}\|\na \db_{\v}\|_{L^2(Q_{t_0})}^2+\frac{1}{6}\v^2\|\na B_{3,\v}\|_{L^2(Q_{t_0})}^2\\
&+C\v^2\Big[(\|\B_0\|_2^2+\v^2\|b_{3,0}\|_2^2)^2+C(\|\U_0\|_{H^1}, L_1, L_2)\Big].
\end{align*}

A similar argument to that for $I_{81}+I_{82}$, yields
\begin{align*}
I_{91}+I_{92}=&-\v^2\int_{Q_{t_0}}(b_{\v}\C\na u_{3,\v} b_3-b_{\v}\C\na u_3b_3)\,dxdydzdt\\
=&-\v^2\int_{Q_{t_0}}(b_{\v}\C\na U_{3,\v})b_3\,dxdydzdt\\
=&-\v^2\int_{Q_{t_0}}\big[\B_{\v}\C\na_H U_{3,\v}-b_{3,\v}\na_H\C\du_{\v}\big]b_3\,dxdydzdt\\
\leq&\v^2\int_0^{t_0}\int_M\Big(\int_{-1}^1|\B_{\v}||\na_H U_{3,\v}|+|b_{3,\v}||\na_H\du_{\v}|\,dz\Big)\Big(\int_{-1}^1|\na_H\B|\,dz\Big)\,dxdydt\\
\leq&C\v^2\int_0^{t_0}(\|\B_{\v}\|_2^{\f}\|\na\B_{\v}\|_2^{\f}\|\na U_{3,\v}\|_2+\|b_{3,\v}\|_2^{\f}\|\na b_{3,\v}\|_2^{\f}\|\na_H \du_{\v}\|_2)\|\na \B\|_2^{\f}\|\De \B\|_2^{\f}\,dt\\
\leq&C\v^2\int_0^{t_0}(\|\B_{\v}\|_2^2\|\na\B_{\v}\|_2^2+\v^2\|b_{3,\v}\|_2^2\|\na b_{3,\v}\|_2^2+\|\na\B\|_2^2\|\De\B\|_2^2)\,dt\\
&+\frac{1}{22}\|\na\du_{\v}\|_{L^2(Q_{t_0})}^2+\frac{1}{6}\v^2\|\na U_{3,\v}\|_{L^2(Q_{t_0})}^2\\
\leq&\frac{1}{16}\|\na\du_{\v}\|_{L^2(Q_{t_0})}^2+\frac{1}{6}\v^2\|\na U_{3,\v}\|_{L^2(Q_{t_0})}^2\\
&+C\v^2\Big[(\|\B_0\|_2^2+\v^2\|b_{3,0}\|_2^2)^2+C(\|\B_0\|_{H^1},L_1,L_2)\Big].
\end{align*}

It is left to estimate the  $I_{10}$ term
\begin{align*}
I_{10}=&\v^2\int_{Q_{t_0}}u_{\v}\C\na b_{3,\v} b_3\,dxdydzdt\\
=&\v^2\int_{Q_{t_0}}u_{\v}\C\na b_{3,\v} b_3\,dxdydzdt-\v^2\int_{Q_{t_0}}u_{\v}\C\na b_3 b_3\,dxdydzdt\\
=&\v^2\int_{Q_{t_0}}u_{\v}\C\na B_{3,\v}b_3\,dxdydzdt\\
=&\v^2\int_{Q_{t_0}}\big[\U_{\v}\C\na_H B_{3,\v}-u_{3,\v}\na_H\C\db_{\v}\big]b_3\,dxdydzdt\\
\leq&\v^2\int_0^{t_0}\int_M\Big(\int_{-1}^1(|\U_{\v}||\na_H B_{3,\v}|+|u_{3,\v}||\na_H\db_{\v}|)\,dz\Big)\Big(\int_{-1}^1|\na_H\B|\,dz\Big)\,dxdydt,
\end{align*}
which is further bounded through the Lemma \ref{b7}, the Poincar\'e and Young inequalities
\begin{align*}
I_{10}\leq&C\v^2\int_0^{t_0}(\|\U_{\v}\|_2^{\f}\|\na\U_{\v}\|_2^{\f}\|\na B_{3,\v}\|_2+\|u_{3,\v}\|_2^{\f}\|\na u_{3,\v}\|_2^{\f}\|\na_H\db_{\v}\|_2)\|\na\B\|_2^{\f}\|\De\B\|_2^{\f}\,dt\\
\leq&C\v^2\int_0^{t_0}(\|\U_{\v}\|_2^2\|\na\U_{\v}\|_2^2+\v^2\|u_{3,\v}\|_2^2\|\na u_{3,\v}\|_2^2+\|\na\B\|_2^2\|\De\B\|_2^2)\,dt\\
&+\frac{1}{6}\v^2\|\na B_{3,\v}\|_{L^2(Q_{t_0})}^2+\frac{1}{22}\|\na_H \db_{\v}\|_{L^2(Q_{t_0})}^2\\
\leq&\frac{1}{6}\v^2\|\na B_{3,\v}\|_{L^2(Q_{t_0})}^2+\frac{1}{22}\|\na\db_{\v}\|_{L^2(Q_{t_0})}^2+C\v^2[(\|\U_0\|_2^2+\v^2\|u_{3,0}\|_2^2)^2+C(\|\B_{0}\|_{H^1},L_1,L_2)].
\end{align*}

In light of  the estimates of  $I_1-I_{10}$ into  (\ref{p4}) yields
\begin{align*}
g(t):=&\big(\|\du_{\v}\|_2^2+\|\db_{\v}\|_2^2+\v^2\|U_{3,\v}\|_2^2+\v^2\|B_{3,\v}\|_2^2\big)(t)\\
&+\int_0^{t}\big(\|\na\du_{\v}\|_2^2+\|\na\db_{\v}\|_2^2+\v^2\|\na U_{3,\v}\|_2^2+\v^2\|\na B_{3,\v}\|_2^2\big)\,ds\\
\leq&C\v^2\Big[(\|\U_0\|_2^2+\|\B_0\|_2^2+\v^2\|u_{3,0}\|_2^2+\v^2\|b_{3,0}\|_2^2+1)^2+C(\|\U_0\|_{H^1}^2,\|\B_0\|_{H^1}^2,L_1,L_2)\Big]\\
&+C\int_0^t(\|\na\U\|_2^2\|\De\U\|_2^2+\|\na\B\|_2^2\|\De\B\|_2^2)(\|\du_{\v}\|_2^2+\|\db_{\v}\|_2^2)=:G(t),
\end{align*}
for a.e. $t\in[0,\infty)$. Therefore, we have
\begin{align*}
G'(t)=&C(\|\na\U\|_2^2\|\De\U\|_2^2+\|\na\B\|_2^2\|\De\B\|_2^2)(\|\du_{\v}\|_2^2+\|\db_{\v}\|_2^2)\\
\leq&C(\|\na\U\|_2^2\|\De\U\|_2^2+\|\na\B\|_2^2\|\De\B\|_2^2)g(t)\\
\leq&C(\|\na\U\|_2^2\|\De\U\|_2^2+\|\na\B\|_2^2\|\De\B\|_2^2)G(t),
\end{align*}
applying  the Gronwall inequality, and using Proposition \ref{5}, we get
\begin{align*}
g(t)\leq &G(t)\leq e^{C\int_0^t(\|\na\U\|_2^2\|\De\U\|_2^2+\|\na\B\|_2^2\|\De\B\|_2^2)\,ds}G(0)\\
\leq&\v^2C(\|\U_0\|_{H^1},\|\B_0\|_{H^1},L_1,L_2)(\|\U_0\|_2^2+\|\B_0\|_2^2+\v^2\|u_{3,0}\|_2^2+\v^2\|b_{3,0}\|_2^2+1)^2,
\end{align*}
where $C$ is a positive constants depending only on $\|\U_0\|_{H^1},\|\B_0\|_{H^1},L_1$ and $L_2$. This completes the proof.
\end{proof}
\vskip .2in

\section{Strong convergence II: the $H^2$ initial data case}
\vskip .1in
 In this section, we deal with  the strong convergence of the  SMHD to the PEM, with the initial data $(\U_0, \B_0) \in H^2(\Om)$, as the aspect ratio parameter $\v$ goes to zero. In order to keep the nation simple, we remove  the subscript index $\v$ of $(\du_{\v}, U_{3,\v}, \db_{\v}, B_{3,\v})$ from the  below, in other words, we replace $(\du_{\v},U_{3,\v},\db_{\v}, B_{3,\v})$ with  $(\du, U_3, \db,B_3)$. We have the following results.
\begin{thm}\label{t2}
Given a periodic function $(\U_0, \B_0)\in H^2(\Om)$, such that
$$\na_H\C\Big(\int_{-1}^1\U_0(x,y,z)\,dz\Big)=0, \qquad \int_{\Om}\U_0(x,y,z)\,dxdydz=0,$$
and
$$\na_H\C\Big(\int_{-1}^1\B_0(x,y,z)\,dz\Big)=0, \qquad \int_{\Om}\B_0(x,y,z)\,dxdydz=0.$$
Let $(\U_{\v}, u_{3,\v}, \B_{\v}, b_{3,\v})$ be the unique local (in time) strong solution to the SMHD and $(\U, u_3, \B, b_3)$ be the unique global strong solution to the PEM, subject to (\ref{pm1})-(\ref{b3}). Denote
$$(\du, U_{3})=(\U_{\v}-\U, u_{3,\v}-u_3),\qquad (\db, B_{3})=(\B_\v-\B, b_{3,\v}-b_3 ).$$

Then, there is a positive constant $\v_0$ depending only on  $\|\U_0\|_{H^2}$, $\|\B_0\|_{H^2}$, $L_1$ and $L_2$, such that, for any $\v\in(0,\v_0)$, the strong solution $(\U_{\v}, u_{3,\v}, \B_{\v}, b_{3,\v})$ of the SMHD exists globally in time, and the following estimate holds
\begin{align*}
&\sup_{0\leq t<\infty}(\|\du\|_{H^1}^2+\v^2\|U_3\|_{H^1}^2+\|\db\|_{H^1}^2+\v^2\|B_3\|_{H^1}^2)(t)\\
&+\int_0^\infty(\|\na\du\|_{H^1}^2+\v^2\|\na U_3\|_{H^1}^2+\|\na\db\|_{H^1}^2+\v^2\|\na B_3\|_{H^1}^2)\,dt\\
\leq &C(\|\U_0\|_{H^2},\|\B_0\|_{H^2},L_1,L_2)\v^2.
\end{align*}
 As a consequence, we have the following strong convergence,
$$(\U_{\v}, \v u_{3,\v}, \B_{\v}, \v b_{3,\v})\rightarrow (\U, 0, \B, 0), ~\text{in}~ L^\infty(0,\infty; H^1(\Om)),$$
$$(\na\U_{\v},\v\na u_{3,\v}, u_{3,\v}, \na\B_{\v}, \v\na b_{3,\v}, b_{3,\v})\rightarrow(\na\U, 0, u_3, \na\B, 0, b_3),~ \text{in}~ L^2(0,\infty; H^1(\Om)),$$
$$(u_{3,\v}, b_{3,\v})\rightarrow (u_3, b_3), ~ \text{in}~ L^\infty(0,\infty; L^2(\Om)),$$
and the convergence rate is of the order $O(\v)$.
\end{thm}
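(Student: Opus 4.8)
The plan is to run a continuation (bootstrap) argument on the difference system \eqref{n1}, treating the right-hand sides of $\eqref{n1}_2$ and $\eqref{n1}_4$ as genuinely small forcing of order $\v^2$ (thanks to the global $H^2$ bounds on $(\U,\B)$ from Proposition \ref{t1}). Since $(\U_0,\B_0)\in H^2(\Om)$, standard local-in-time theory for the SMHD gives a unique strong solution $(\U_\v,u_{3,\v},\B_\v,b_{3,\v})$ on some maximal interval $[0,T_\v)$, and on that interval all of $(\du,U_3,\db,B_3)$ are strong solutions of \eqref{n1} with zero initial data. First I would set up the energy functional
\[
E(t):=\|\du\|_{H^1}^2+\v^2\|U_3\|_{H^1}^2+\|\db\|_{H^1}^2+\v^2\|B_3\|_{H^1}^2,
\]
and aim to prove a differential inequality of the form
\[
\ddt E+\big(\|\na\du\|_{H^1}^2+\v^2\|\na U_3\|_{H^1}^2+\|\na\db\|_{H^1}^2+\v^2\|\na B_3\|_{H^1}^2\big)\le \big(A(t)+B(t)E\big)E+C\v^2 D(t),
\]
where $A,D\in L^1(0,\infty)$ and $B\in L^1(0,\infty)$ are controlled purely by the PEM norms $\|\U\|_{H^2}^2,\|\na\U\|_{H^2}^2,\|\p_t\U\|_{H^1}^2$ etc., all of which have finite time-integral by Propositions \ref{5} and \ref{t1}. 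The crucial structural point, already flagged in the excerpt, is that the forcing only enters $\eqref{n1}_2$ and $\eqref{n1}_4$ with the prefactor $\v^2$, so in the $L^2$ energy estimate the forcing contributes $\v^2\int(\p_tu_3+u\C\na u_3-\De u_3-b\C\na b_3)\,U_3$, which after Cauchy--Schwarz is absorbed into $\tfrac12\v^2\|\na U_3\|_2^2$ (Poincar\'e) plus $C\v^2\int(\|\p_tu_3\|_2^2+\dots)$, the latter being $O(\v^2)$ and $L^1$ in time by Proposition \ref{t1}.

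The key steps, in order: (1) the $L^2$-level estimate—test $\eqref{n1}_1$ by $\du$, $\eqref{n1}_2$ by $U_3$, $\eqref{n1}_3$ by $\db$, $\eqref{n1}_4$ by $B_3$, use incompressibility to kill the $(U_\v\C\na)$ self-transport terms, handle the "mixed" quadratic terms $(U_\v\C\na)\U$, $(U_\v\C\na)u_3$, $B_\v\C\na\B$, $B_\v\C\na\U$ via Lemmas \ref{b7}--\ref{b8}, the anisotropic splitting $U_\v=(\du_\v,U_{3,\v})$, $U_{3,\v}=-\int_0^z\na_H\C\du_\v$, and Young's inequality so that gradient terms are absorbed; (2) the $H^1$-level estimate—test by $-\De\du$, $-\De U_3$, $-\De\db$, $-\De B_3$ (or equivalently apply $\na$ and test), where the main new difficulty is the trilinear terms $\int(\du\C\na)\du\C\De\du$ and its MHD analogues, which I would bound by $C\|\na\du\|_2^{1/2}\|\De\du\|_2^{3/2}\|\na\du\|_2$-type Ladyzhenskaya/Agmon estimates, again absorbing the top-order $\|\De\du\|_2^2$; (3) combine (1)+(2) to get the displayed differential inequality; (4) observe $E(0)=0$, set $\mathcal E(t):=\sup_{[0,t]}E$, and argue: as long as $\mathcal E\le\rho$ for a fixed small $\rho$ (absorbing the $B(t)E\cdot E$ term into $B(t)\rho\cdot E$), Gronwall against the $L^1$ function $A(t)+B(t)\rho$ gives $E(t)\le C\v^2\exp(\int_0^\infty(A+B\rho))=:K\v^2$; (5) choose $\v_0$ so small that $K\v_0^2<\rho$—then the bootstrap constraint $\mathcal E\le\rho$ is never saturated on $[0,T_\v)$, hence $T_\v=\infty$ (since the $H^1$ difference bound plus the known PEM $H^2$ bound keep the SMHD solution in $H^1$, so no blow-up), and the stated global estimate holds with the $O(\v^2)$ right-hand side.

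The main obstacle is step (2), the uniform-in-time $H^1$ estimate: one must show that every cubic/bilinear interaction term—especially those coupling the difference $\du$ with the inhomogeneous PEM background $(\U,\B)$ and their second derivatives, and the genuinely three-dimensional terms involving $U_3=-\int_0^z\na_H\C\du$—can be dominated by the dissipation $\|\na\du\|_{H^1}^2$ plus an $L^1(0,\infty)$ time weight times $E$, \emph{without} accumulating any $\v$-independent lower-order loss that would destroy the smallness. This is exactly where the global integrability afforded by Proposition \ref{t1} ($\|\na\U\|_{H^2}^2,\|\p_t\U\|_{H^1}^2\in L^1(0,\infty)$, and likewise for $\B$) is indispensable, and where the anisotropic Ladyzhenskaya inequalities of Lemma \ref{b7} and the divergence-form estimate of Lemma \ref{b8} do the heavy lifting. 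Once the a priori bound $E(t)\le C\v^2$ is in hand, the three claimed strong convergences—$(\U_\v,\v u_{3,\v},\B_\v,\v b_{3,\v})\to(\U,0,\B,0)$ in $L^\infty(0,\infty;H^1)$, the gradient convergence in $L^2(0,\infty;H^1)$ (here $u_{3,\v}\to u_3$ because $\|U_3\|_{H^1}=\|u_{3,\v}-u_3\|_{H^1}$ is bounded and $\v u_{3,\v}\to 0$ forces $u_{3,\v}\to u_3$ via $\v U_3\to0$ together with the $\v$-free part), and $u_{3,\v}\to u_3$ in $L^\infty(0,\infty;L^2)$—follow immediately, with rate $O(\v)$.
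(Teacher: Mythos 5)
Your proposal follows the paper's proof in all essentials: an $H^1$-level energy estimate on the difference system (testing by $-\De\du$, $-\De U_3$, $-\De\db$, $-\De B_3$, with Lemmas \ref{b7}--\ref{b8} controlling the trilinear terms and Proposition \ref{t1} supplying the $L^1$-in-time weights and the $O(\v^2)$ forcing), combined with exactly the bootstrap you describe --- a smallness threshold ($\delta_0$ in Proposition \ref{m8}, your $\rho$) that is never saturated once $\v_0$ is chosen small enough, whence $T_\v^*=\infty$ and the global $O(\v^2)$ bound. The only cosmetic difference is that the paper obtains the $L^2$-level estimate by citing Proposition \ref{d2} from the $H^1$-data section rather than re-deriving it by direct testing as in your step (1).
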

\begin{re}
Theorem \ref{t0} and Theorem  \ref{t2}  deal with  the strong convergence of solutions of SMHD to the  PEM is global and uniform in time, and they converge in the same order. In addition, smoothing the initial data is a strong norm in which convergence occurs.
\end{re}
\begin{re}
Generally, if $(\U_0,\B_0)\in H^k$, with $k\geq 2$, then one can show that
\begin{align*}
&\sup_{0\leq t<\infty}(\|\du\|_{H^{k-1}}^2+\v^2\|U_3\|_{H^{k-1}}^2+\|\db\|_{H^{k-1}}^2+\v^2\|B_3\|_{H^{k-1}}^2)(t)\\
&+\int_0^\infty(\|\na\du\|_{H^{k-1}}^2+\v^2\|\na U_3\|_{H^{k-1}}^2+\|\na\db\|_{H^{k-1}}^2+\v^2\|\na B_3\|_{H^{k-1}}^2)\,dt\\
\leq& C(\|\U_0\|_{H^k},\|\B_0\|_{H^k},L_1,L_2)\v^2,
\end{align*}
 moreover, we have the following strong convergence,
$$(\U_{\v}, \v u_{3,\v}, \B_{\v}, \v b_{3,\v})\rightarrow (\U, 0, \B, 0), ~\text{in}~ L^\infty(0,\infty; H^{k-1}(\Om)),$$
$$(\na\U_{\v},\v\na u_{3,\v}, u_{3,\v}, \na\B_{\v}, \v\na b_{3,\v}, b_{3,\v})\rightarrow(\na\U, 0, u_3, \na\B, 0, b_3),~ \text{in}~ L^2(0,\infty; H^{k-1}(\Om)),$$
$$(u_{3,\v}, b_{3,\v})\rightarrow (u_3, b_3), ~ \text{in}~ L^\infty(0,\infty; H^{k-2}(\Om)),$$
and the convergence rate is of the order $O(\v)$.

This can be achieved by making higher energy estimates for the difference system (\ref{n1}).
\end{re}

\begin{re}
The smoothing effect is observed in the SMHD and the PEM to the unique strong solutions, we can also prove that in this theorem (rather than theorem \ref{t0}), the strong convergence to a stronger norm always starts from the initial time, in particular, $(\U_{\v}, u_{3,\v})\rightarrow (\U, u_3)$ and $(\B_{\v},b_{3,\v})\rightarrow(\B, b_3)$ in $C^k(\overline\Om\times(T,\infty)),$  for any given  time $T>0$ and  integer $k\geq 0$.
\end{re}

In this section,  we give the proof of Theorem \ref{t2}. Let $(\U_0, \B_0)\in H^2(\Om)$, and assume that
\begin{align}\label{m1}
\na_H\C\Big(\int_{-1}^1\U_0(x,y,z)\,dz\Big)=0 \quad\text{and}\quad \na_H\C\Big(\int_{-1}^1\B_0(x,y,z)\,dz\Big)=0,
\end{align}
for all $(x,y)\in M$. Set $u_0=(\U_0, u_{3,0})$, $b_0=(\B_0,b_{3,0})$, $\na\C u_0=0$ and  $\na\C b_0=0$, with $u_{3,0}$ and $b_{3,0}$ given by (\ref{t3}) and (\ref{mm}), then $(u_0, b_0)\in H^1(\Om)$. Through the same argument as  the standard MHD equations, see, e.g.,\cite{DL}, it can be  proved that there is a unique local (in time) strong solution $(\U_{\v}, u_{3,\v}, \B_{\v}, b_{3,\v})$ to the SMHD subject to (\ref{pm1})-(\ref{b3}). $T_{\v}^*$ represents the maximal existence time of the strong solution. Because of the smoothing effect of SMHD on the unique strong solution, it can be proved that  strong solution $(\U_{\v},u_{3,\v}, \B_{\v}, b_{3,\v})$ is smooth over the time interval $(0,T_{\v}^*)$, recalling that $(\U, u_3, \B, b_3)$ is smooth away from the initial time, the same as $(\du,U_{3}, \db, B_{3})$. This guarantees the validity of the argument in the following proof.

We need to make  \emph{a priori} estimates of $(\du_{\v}, U_{3,\v}, \db_{\v},B_{3,\v})$. We begin with  the basic energy estimate described in the following proposition.
\vskip .2in

\begin{prop}\label{m7}(Basic $L^2$ energy estimate). The following basic energy estimate holds
\begin{align*}
&\sup_{0\leq s\leq t}(\|\du\|_2^2+\v^2\|U_{3}\|_2^2+\|\db\|_2^2+\v^2\|B_{3}\|_2^2)\\
&+\int_0^t(\|\na \du\|_2^2+\v^2\|\na U_{3}\|_2^2+\|\na \db\|_2^2+\v^2\|\na B_{3}\|_2^2)\,ds\\
\leq&C\v^2(\|\U_0\|_2^2+\|\B_0\|_2^2+\v^2\|u_{3,0}\|_2^2+\v^2\|b_{3,0}\|_2^2+1)^2,
\end{align*}
for any $t\in[0,T_{\v}^*)$, where $C$ is a constant depending on $\|\U\|_{H^1}$, $\|\B\|_{H^1}$, $L_1$ and $L_2$.
\end{prop}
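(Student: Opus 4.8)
The plan is to derive the basic $L^2$ energy estimate for the difference system $(\ref{n1})$ by the standard energy method, taking full advantage of the vanishing initial data $(\du,U_3,\db,B_3)|_{t=0}=0$ and the small factor $\v^2$ in front of the right-hand side forcing terms in $(\ref{n1})_2$ and $(\ref{n1})_4$. First I would take the $L^2(\Om)$ inner product of $(\ref{n1})_1$ with $\du$, of $(\ref{n1})_2$ with $U_3$, of $(\ref{n1})_3$ with $\db$, and of $(\ref{n1})_4$ with $B_3$, then add the four resulting identities. Many terms cancel: the Lorentz/induction coupling terms pair off (e.g. $-\int B_\v\C\na\db\C\du$ against $-\int B_\v\C\na\du\C\db$ after integration by parts using $\na\C B_\v=0$), and the genuinely quadratic advection terms $\int (U_\v\C\na)(\cdot)\C(\cdot)$ vanish by divergence-freeness. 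What survives on the left is $\frac{1}{2}\ddt(\|\du\|_2^2+\v^2\|U_3\|_2^2+\|\db\|_2^2+\v^2\|B_3\|_2^2)+(\|\na\du\|_2^2+\v^2\|\na U_3\|_2^2+\|\na\db\|_2^2+\v^2\|\na B_3\|_2^2)$, and on the right a collection of ``transport against the background flow'' terms of the type $\int(U_\v\C\na)\U\C\du$, $\int(B_\v\C\na)\B\C\du$, etc., together with the $\v^2$-weighted forcing terms $-\v^2\int(\p_t u_3+u\C\na u_3-\De u_3-b\C\na b_3)U_3$ and its $B_3$ analogue.

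The core of the argument is then to bound each right-hand side term. For the transport-type terms I would follow exactly the pattern already established in the proof of Proposition~\ref{d2}: split $U_\v\C\na = \du\C\na_H + U_{3}\p_z$ (and similarly for $B_\v$), use integration by parts on the vertical derivative to replace $\p_z U_3$ by $-\na_H\C\du$ via the incompressibility constraint $(\ref{n1})_5$, and estimate the resulting terms by the anisotropic Ladyzhenskaya-type inequalities of Lemma~\ref{b7} together with Sobolev embedding, the Poincar\'e inequality, and Young's inequality, absorbing every $\|\na\du\|_2^2$, $\|\na\db\|_2^2$, $\v^2\|\na U_3\|_2^2$, $\v^2\|\na B_3\|_2^2$ contribution into the dissipation on the left with a small constant. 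The forcing terms are controlled by $\v^2$ times Cauchy--Schwarz, using that $\|\p_t\U\|_{L^2_tL^2_x}$, $\|\na\U\|_{L^2_tH^1}$, $\|\na u_3\|_{L^2_tL^2_x}=\|\na_H\U\|$-type quantities, and their $\B$ counterparts are all bounded by $C(\|\U_0\|_{H^1},\|\B_0\|_{H^1})$ via Proposition~\ref{5}; here the factor $\v^2$ is exactly what produces the $\v^2$ on the right of the claimed estimate.

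After these bounds, the differential inequality takes the shape
\begin{align*}
\ddt\, \mathcal E(t) + \mathcal D(t) \le C\big(\|\na\U\|_2^2\|\De\U\|_2^2+\|\na\B\|_2^2\|\De\B\|_2^2\big)\mathcal E(t) + C\v^2\Lambda,
\end{align*}
where $\mathcal E=\|\du\|_2^2+\v^2\|U_3\|_2^2+\|\db\|_2^2+\v^2\|B_3\|_2^2$, $\mathcal D$ is the full dissipation, and $\Lambda=(\|\U_0\|_2^2+\|\B_0\|_2^2+\v^2\|u_{3,0}\|_2^2+\v^2\|b_{3,0}\|_2^2+1)^2$ up to a constant depending only on $\|\U_0\|_{H^1},\|\B_0\|_{H^1},L_1,L_2$. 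Since $\mathcal E(0)=0$ and, by Proposition~\ref{5}, $\int_0^\infty(\|\na\U\|_2^2\|\De\U\|_2^2+\|\na\B\|_2^2\|\De\B\|_2^2)\,ds<\infty$, Gronwall's inequality gives $\mathcal E(t)\le C\v^2\Lambda$ uniformly in $t\in[0,T_\v^*)$, and integrating the differential inequality in time yields the same bound for $\int_0^t\mathcal D\,ds$. I expect the main obstacle to be the careful bookkeeping of the coupling (magnetic) terms: one must check that, unlike the pure Navier--Stokes case, the induction equation produces extra terms such as $\int U_\v\C\na\B\C\du$ and $\int B_\v\C\na\U\C\db$ that do \emph{not} cancel but instead must be estimated and absorbed; the anisotropic structure (no $\p_z$ dissipation saving, only $\na_H$ in the Ladyzhenskaya estimates) forces one to always convert $\p_z$-derivatives of difference functions into horizontal derivatives before estimating, and keeping the numerical constants small enough to close the absorption is the delicate point. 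Once this is done the estimate follows as stated.
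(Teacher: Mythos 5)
Your plan is mathematically sound, but it takes a much longer road than the paper does: the paper's entire proof of Proposition \ref{m7} is the single observation that the strong solution of the SMHD on $[0,T_{\v}^*)$ is in particular a Leray--Hopf weak solution satisfying the energy inequality of Definition \ref{7}, so the estimate is literally Proposition \ref{d2} applied on the existence interval --- no new computation is needed. Your direct energy method on the difference system (\ref{n1}) is a legitimate alternative (and is exactly the strategy the paper reserves for the \emph{first-order} estimate in Proposition \ref{m8}); it is available here precisely because strong solutions justify all the formal integrations by parts that fail for weak solutions, which is why Section 3 had to take the Serrin-type weak--strong detour instead. Two points of care if you carry your route out in full. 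First, in the $\v^2$-weighted terms such as $\v^2\int(\du\C\na_H u_3)U_{3}$, the quantity $\na_H u_3=-\int_0^z\na_H(\na_H\C\U)\,d\xi$ costs two derivatives of $\U$, so Lemma \ref{b8} in the form requiring $\|\De\du\|_2$ is unavailable at the $L^2$ level; you must exploit the $z$-integrated structure and use Lemma \ref{b7} to land on Gronwall factors like $\|\De\U\|_2^2$, which are time-integrable by Proposition \ref{5}. Second, your route produces a bound of the form $C(\|\U_0\|_{H^1},\|\B_0\|_{H^1},L_1,L_2)\,\v^2$ without the factor $(\|\U_0\|_2^2+\|\B_0\|_2^2+\v^2\|u_{3,0}\|_2^2+\v^2\|b_{3,0}\|_2^2+1)^2$; that quadratic factor is an artifact of the Section 3 argument (where the SMHD energy inequality is invoked to control $\sup\|\U_\v\|_2^2\int\|\na\U_\v\|_2^2\,dt$), so your cleaner bound still implies the stated one. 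What the paper's citation buys is brevity and a single unified estimate valid for all Leray--Hopf weak solutions; what your re-derivation buys is a self-contained argument that generalizes directly to the higher-order estimates of Proposition \ref{m8}.
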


\begin{proof}
This is a direct consequence of Proposition \ref{d2}.
\end{proof}
\vskip.1in
The following proposition is the first order energy estimate.
\begin{prop}\label{m8}($H^1$ energy estimates)
There is a constant $\delta_0>0$ depending only on $L_1$ and $L_2$, so that, we have the following estimate
\begin{align*}
&\sup_{0\leq s\leq t}(\|\na\du\|_2^2+\v^2\|\na U_3\|_2^2+\|\na\db\|_2^2+\v^2\|\na B_3\|_2^2)\\
&+\int_0^t(\|\De\du\|_2^2+\v^2\|\De U_3\|_2^2+\|\De\db\|_2^2+\v^2\|\De B_3\|_2^2)\,ds\\
\leq &2C_1\v^2e^{C_1(1+\v^4)\int_0^t(\|\De\U\|_2^2\|\na\De\U\|_2^2+\|\De\B\|_2^2\|\na\De\B\|_2^2)\,ds}\\
&\times\int_0^t(1+\|\De\U\|_2^2+\|\De\B\|_2^2)(\na\p_t\U\|_2^2+\|\na\p_t\B\|_2^2+\|\na\De\U\|_2^2+\|\na\De\B\|_2^2)\,ds.
\end{align*}
for any $t\in[0,T_{\v}^*)$, provided that
\begin{align*}
&\sup_{0\leq s\leq t}(\|\na\du\|_2^2+\v^2\|\na U_3\|_2^2+\|\na\db\|_2^2+\v^2\|\na B_3\|_2^2)\leq \delta_0^2,
\end{align*}
where $C$ is a positive constant depending only on $L_1$ and $L_2$.
\end{prop}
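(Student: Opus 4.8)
The plan is to derive the $H^1$ estimate for the difference system \eqref{n1} by testing the horizontal momentum and magnetic equations against $-\Delta\du$ and $-\Delta\db$ (equivalently, differentiating and testing against $\na\du,\na\db$), and the vertical equations $(\ref{n1})_2$, $(\ref{n1})_4$ against $-\v^2\Delta U_3$ and $-\v^2\Delta B_3$. Summing produces, on the left, $\tfrac12\ddt(\|\na\du\|_2^2+\v^2\|\na U_3\|_2^2+\|\na\db\|_2^2+\v^2\|\na B_3\|_2^2)$ together with the full dissipation $\|\De\du\|_2^2+\v^2\|\De U_3\|_2^2+\|\De\db\|_2^2+\v^2\|\De B_3\|_2^2$; on the right, a long list of trilinear terms coming from the nonlinearities $(U_\v\C\na)\du$, $(u\C\na)\du$, $(U_\v\C\na)\U$, the corresponding magnetic couplings, plus the two ``forcing'' terms $-\v^2(\p_t u_3+u\C\na u_3-\De u_3-b\C\na b_3)$ and $-\v^2(\p_t b_3+u\C\na b_3-\De b_3-b\C\na u_3)$. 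The forcing terms, after integration by parts using $u_3=-\int_0^z\na_H\C\U\,d\xi$ and the divergence-free conditions (exactly as in the proof of Proposition~\ref{d1}), are controlled by $C\v^2(\|\na\p_t\U\|_2^2+\|\na\p_t\B\|_2^2+\|\na\De\U\|_2^2+\|\na\De\B\|_2^2)$ plus a small multiple of the dissipation; this is where the factor $\v^2$ on the right-hand side of the claimed bound originates.

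The core work is to bound the trilinear terms so that each contains either a small absorbable multiple of the dissipation, or a prefactor of the form $(\|\De\U\|_2^2+\|\De\B\|_2^2)$ times $\v^2(\cdots)$, or a coefficient times the $H^1$ energy of the difference that can be fed to Gronwall. For the genuinely nonlinear self-interaction terms such as $\int (U_\v\C\na)\du\C(-\De\du)$, I would split $U_\v=\du+\U$, using $U_3=-\int_0^z\na_H\C\du$ together with Lemma~\ref{b7} and Lemma~\ref{b8} to handle the vertical-component pieces by the anisotropic Ladyzhenskaya inequalities; the $\U$-part uses the $H^2$ (and $H^3$) regularity of the PEM solution from Proposition~\ref{t1}. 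The Young inequality then splits each estimate into $\tfrac{1}{N}\|\De\du\|_2^2$ (etc., with $N$ large enough that all such pieces sum to, say, half the dissipation) plus terms of the form $C\|\De\U\|_2^2\|\na\De\U\|_2^2\|\na\du\|_2^2$ or $C(1+\|\De\U\|_2^2+\|\De\B\|_2^2)\|\na\De\U\|_2^2\cdot\v^2$. The magnetic coupling terms $B_\v\C\na\db$, $b\C\na\db$, $B_\v\C\na\B$ and their counterparts are treated by the same dichotomy, exploiting the symmetry of the MHD structure (the worst quadratic-in-difference terms cancel pairwise, just as the $I_4+I_5$ cancellation in the proof of Proposition~\ref{d2}). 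The smallness hypothesis $\sup_{0\le s\le t}(\|\na\du\|_2^2+\v^2\|\na U_3\|_2^2+\|\na\db\|_2^2+\v^2\|\na B_3\|_2^2)\le\delta_0^2$ is precisely what lets me absorb the remaining difference-cubic terms: a term like $\|\na\du\|_2^2\|\De\du\|_2^2$ becomes $\le\delta_0^2\|\De\du\|_2^2$, absorbable once $\delta_0$ is chosen small depending only on $L_1,L_2$.

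After all absorptions, one arrives at a differential inequality of the schematic form
\begin{align*}
&\ddt E(t)+\mathcal D(t)\le C_1(1+\v^4)\big(\|\De\U\|_2^2\|\na\De\U\|_2^2+\|\De\B\|_2^2\|\na\De\B\|_2^2\big)E(t)\\
&\qquad\qquad+C_1\v^2(1+\|\De\U\|_2^2+\|\De\B\|_2^2)\big(\|\na\p_t\U\|_2^2+\|\na\p_t\B\|_2^2+\|\na\De\U\|_2^2+\|\na\De\B\|_2^2\big),
\end{align*}
where $E(t)$ and $\mathcal D(t)$ denote the $H^1$ energy and dissipation of the difference. Since $E(0)=0$ (the initial data of the difference vanishes), applying the Gronwall inequality and integrating the dissipation yields exactly the stated bound, where the exponential integrating factor is finite by Proposition~\ref{t1} (which gives $\int_0^\infty\|\na\De\U\|_2^2\,dt<\infty$, $\|\De\U\|_{L^\infty_t L^2}<\infty$, and $\p_t\U\in L^2_tH^1$, and similarly for $\B$). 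The main obstacle is the bookkeeping for the vertical-velocity/magnetic trilinear terms: because $U_3$ and $B_3$ carry the $\v^2$ weight only in the energy but enter the nonlinearity unweighted, one must be careful that every such term either lands against $\v^2\|\De U_3\|_2^2$ (or $\v^2\|\De B_3\|_2^2$) after an integration by parts exchanging $\p_z$ for $\na_H\C$, or else is routed through Lemma~\ref{b7} so that it produces $\v^2$ times a PEM-regularity quantity rather than an unweighted one; getting this routing right for all of the roughly two dozen terms, while keeping the absorbed dissipation coefficients summing to less than one, is the delicate part. Everything else is a routine application of Hölder, Sobolev, Poincaré, and Young, together with the a priori PEM bounds.
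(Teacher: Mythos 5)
Your proposal follows essentially the same route as the paper: testing the difference system \eqref{n1} against $-\De\du$, $-\v^2\De U_3$, $-\De\db$, $-\v^2\De B_3$, estimating the trilinear terms via Lemmas \ref{b7}--\ref{b8} together with $\|\na u_3\|_2\le C\|\De\U\|_2$ and $\|\De u_3\|_2\le C\|\na\De\U\|_2$, absorbing the difference-cubic terms with the smallness hypothesis on $\delta_0$, and closing with Gronwall from $E(0)=0$. The only small inaccuracies are notational (the splitting should read $U_\v=(\du,U_3)$ with $\du=\U_\v-\U$, not $U_\v=\du+\U$) and the appeal to a pairwise cancellation of the magnetic couplings, which the paper does not use at the $H^1$ level --- it estimates $J_2$ and $J_7$ directly --- but neither affects the validity of the argument.
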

\begin{proof}[Proof]
Multiplying the  equations $(\ref{n1})_1$, $(\ref{n1})_2$, $(\ref{n1})_3$ and $(\ref{n1})_4$ by $-\De\du$, $-\De U_3$, $-\De\db$ and  $-\De B_3$, respectively, integrating the result over $\Om$, then it follows from integration by parts that
\begin{align}\label{t28}
&\f\ddt(\|\na \du\|_2^2+\v^2\|\na U_{3}\|_2^2+\|\na \db\|_2^2
+\v^2\|\na B_3\|_2^2)\notag\\
&+\int_{\Om}\|\De\du\|_2^2+\v^2\|\De U_3\|_2^2+\|\De\db\|_2^2+\v^2\|\De B_3\|_2^2\,dxdydz\notag\\
=&\int_{\Om}\big[(U\C\na)\du+(u\C\na)\du+(U\C\na)\U\big]\C\De\du\,dxdydz\notag\\
&-\int_{\Om}\big[(B\C\na)\db+(b\C\na)\db+(B\C\na)\B\big]\C\De\du\,dxdydz\notag\\
&+\v^2\int_{\Om}\big(U\C\na U_3+u\C\na U_3+U\C\na u_3\big)\De U_3\,dxdydz\notag\\
&-\v^2\int_{\Om}\big(B\C\na B_3+b\C\na B_3+B\C\na b_3\big)\De U_3\,dxdydz\notag\\
&+\v^2\int_{\Om}\big(\p_t u_3+u\C\na u_3-\De u_3-b\C\na b_3\big)\De U_3\,dxdydz\notag\\
&+\int_{\Om}\big[(U\C\na)\db+(u\C\na)\db+(U\C\na)\B\big]\C\De\db\,dxdydz\notag\\
&-\int_{\Om}\big[(B\C\na)\du+(b\C\na)\du+(B\C\na)\U\big]\C\De\db\,dxdydz\notag\\
&+\v^2\int_{\Om}\big[U\C\na B_3+u\C\na B_3+U\C\na b_3\big]\De B_3\,dxdydz\notag\\
&-\v^2\int_{\Om}\big[B\C\na U_3+b\C\na U_3+B\C\na u_3\big]\De B_3\,dxdydz\notag\\
&+\v^2\int_{\Om}\big(\p_t b_3+u\C\na b_3-\De b_3-b\C\na u_3\big)\De B_3\,dxdydz\notag\\
:=&J_1+J_2+J_3+J_4+J_5+J_6+J_7+J_8+J_9+J_{10}.
\end{align}

Let us estimate the ten terms appearing above. First, by Lemma \ref{b8},  it follows from the Young and Poincar\'e inequalities that
\begin{align*}
J_1=&\int_{\Om}\big[(U\C\na)\du+(u\C\na)\du+(U\C\na)\U\big]\C\De\du\,dxdydz\\
\leq&C(\|\na \du\|_2\|\De\du\|_2+\|\na\U\|_2^{\f}\|\De\U\|_2^{\f}\|\na U\|_2^{\f}\|\De U\|_2^{\f})\|\De\du\|_2\\
\leq&\frac{1}{15}\|\De\du\|_2^2+C(\|\na \du\|_2^2\|\De\du\|_2^2+\|\na\U\|_2^2\|\De\U\|_2^2\|\na\du\|_2^2)\\
\leq&\frac{1}{15}\|\De\du\|_2^2+C(\|\na\du\|_2^2\|\De\du\|_2^2+\|\De\U\|_2^2\|\na \De\U\|_2^2\|\na \du\|_2^2).
\end{align*}

A similar argument to that for $J_1$, yields
\begin{align*}
J_2=&\int_{\Om}\big[(B\C\na)\db+(b\C\na)\db+(B\C\na)\B\big]\C\De\du\,dxdydz\\
\leq&C(\|\na \db\|_2\|\De\db\|_2+\|\na\B\|_2^{\f}\|\De\B\|_2^{\f}\|\na \db\|_2^{\f}\|\De\db\|_2^{\f})\|\De\du\|_2\\
\leq&\frac{1}{15}\|\De\du\|_2^2+\frac{1}{10}\|\De\db\|_2^2+C(\|\na\db\|_2^2\|\De\db\|_2^2
+\|\De\B\|_2^2\|\na\De\B\|_2^2\|\na\db\|_2^2).
\end{align*}

Using the fact that
$$\|\na u_3\|_2\leq C\|\De \U\|_2, \qquad \|\De u_3\|_2\leq C\|\na \De \U\|_2,$$
which can be easily verified by recalling $u_3(x,y,z,t)=-\int_0^z\na_H\C\U(x,y, \xi,t)\,d\xi$ and using the Poincar\'e inequality, we get
\begin{align*}
J_3=&\v^2\int_{\Om}\big[(U\C\na)U_3+(u\C\na) U_3+(U\C\na) u_3\big]\De U_3\,dxdydz\\
\leq&C\v^2\Big[(\|\na \du\|_2^{\f}\|\De\du\|_2^{\f}+\|\na\U\|_2^{\f}\|\De\U\|_2^{\f})\|\na U_3\|_2^{\f}\|\De U_3\|_2^{\f}\\
&+\|\na \du\|_2^{\f}\|\De\du\|_2^{\f}\|\na u_3\|_2^{\f}\|\De u_3\|_2^{\f}\Big]\|\De U_3\|_2\\
\leq&\frac{1}{15}\|\De\du\|_2^2+\frac{1}{10}\v^2\|\De U_3\|_2^2+C(\|\na \du\|_2^2\|\De \du\|_2^2+\v^4\|\na U_3\|_2^2\|\De U_3\|_2^2)\\
&+C\v^2\|\na \U\|_2^2\|\De\U\|_2^2\|\na U_3\|_2^2+C\v^4\|\na u_3\|_2^2\|\De u_3\|_2^2\|\na \du\|_2^2\\
\leq&\frac{1}{15}\|\De\du\|_2^2+\frac{1}{10}\v^2\|\De U_3\|_2^2+C(\|\na \du\|_2^2+\v^2\|\na U_3\|_2^2)\\
&\times\big[\|\De\du\|_2^2+\v^2\|\De U_3\|_2^2+(1+\v^4)\|\De\U\|_2^2\|\na\De\U\|_2^2\big].
\end{align*}

Using again Lemma \ref{b8} and H\"{o}lder, Poincar\'e and  Young inequalities give that
\begin{align*}
J_4=&\v^2\int_{\Om}(B\C\na B_3+b\C\na B_3+B\C\na b_3)\De U_3\,dxdydz\\
\leq&C\v^2\Big[(\|\na \db\|_2^{\f}\|\De \db\|_2^{\f}+\|\na \B\|_2^{\f}\|\De\B\|_2^{\f})\|\na B_3\|_2^{\f}\|\De B_3\|_2^{\f}\\
&+\|\na\db\|_2^{\f}\|\De\db\|_2^{\f}\|\na b_3\|_2^{\f}\|\De b_3\|_2^{\f}\Big]\|\De U_3\|_2\\
\leq&\frac{1}{10}\v^2\|\De U_3\|_2^2+\frac{1}{10}\|\De\db\|_2^2+\frac{1}{10}\v^2\|\De B_3\|_2^2+C(\|\na\db\|_2^2\|\De\db\|_2^2+\v^4\|\na B_3\|_2^2\|\De B_3\|_2^2)\\
&+C\v^2\|\na \B\|_2^2\|\De\B\|_2^2\|\na B_3\|_2^2+C\v^4\|\na b_3\|_2^2\|\De b_3\|_2^2\|\na \db\|_2^2\\
\leq&\frac{1}{10}\v^2\|\De U_3\|_2^2+\frac{1}{10}\|\De\db\|_2^2+\frac{1}{10}\v^2\|\De B_3\|_2^2\\
&+C(\|\na \db\|_2^2+\v^2\|\na B_3\|_2^2)(\|\De\db\|_2^2+\v^2\|\De B_3\|_2^2+(1+\v^4)\|\De\B\|_2^2\|\na\De\B\|_2^2),
\end{align*}
where in the last step, we have used the fact that
$$\|\na b_3\|_2\leq C\|\De \B\|_2, \qquad \|\De b_3\|_2\leq C\|\na \De \B\|_2.$$

For $J_5$, we have
\begin{align*}
J_5=&\v^2\int_{\Om}(\p_t u_3+u\C\na u_3-\De u_3-b\C\na b_3)\De U_3\,dxdydz\\
\leq&\v^2(\|\p_t u_3\|_2+\|\De u_3\|_2)\|\De U_3\|_2+C\v^2\|\na\U\|_2^{\f}\|\De\U\|_2^{\f}\|\na u_3\|_2^{\f}\|\De u_3\|_2^{\f}\|\De U_3\|_2\\
&+C\v^2\|\na \B\|_2^{\f}\|\De\B\|_2^{\f}\|\na b_3\|_2^{\f}\|\De b_3\|_2^{\f}\|\De U_3\|_2\\
\leq&\frac{1}{10}\v^2\|\De U_3\|_2^2+C\v^2(\|\p_t u_3\|_2^2+\|\De u_3\|_2^2+\|\na \U\|_2^2\|\De\U\|_2^2+\|\na u_3\|_2^2\|\De u_3\|_2^2\\
&+\|\na \B\|_2^2\|\De\B\|_2^2+\|\na b_3\|_2^2\|\De b_3\|_2^2)\\
\leq&\frac{1}{10}\v^2\|\De U_3\|_2^2+C\v^2(\|\na\p_t \U\|_2^2+\|\na \De\U\|_2^2+\|\De\U\|_2^2\|\na\De\U\|_2^2+\|\De\B\|_2^2\|\na\De\B\|_2^2).
\end{align*}

By the H\"{o}lder and  Young inequalities, we deduce
\begin{align*}
J_6=&\int_{\Om}\big[(U\C\na)\db+(u\C\na)\db+(U\C\na)\B\big]\C\De\db\,dxdydz\\
\leq&C\big(\|\na \du\|_2^{\f}\|\De\du\|_2^{\f}\|\na\db\|_2^{\f}\|\De\db\|_2^{\f}+\|\na\U\|_2^{\f}\|\De\U\|_2^{\f}\|\na\db\|_2^{\f}\|\De\db\|_2^{\f}\\
&+\|\na\du\|_2^{\f}\|\De\du\|_2^{\f}\|\na\B\|_2^{\f}\|\De\B\|_2^{\f}\big)\|\De\db\|_2\\
\leq&\frac{1}{10}\|\De\db\|_2^2+\frac{1}{15}\|\De\du\|_2^2+C(\|\na \du\|_2^2\|\De\du\|_2^2+\|\na\db\|_2^2\|\De\db\|_2^2\\
&+\|\na\U\|_2^2\|\De\U\|_2^2\|\na \db\|_2^2+\|\na \B\|_2^2\|\De\B\|_2^2\|\na\du\|_2^2)\\
\leq&\frac{1}{10}\|\De\db\|_2^2+\frac{1}{15}\|\De\du\|_2^2+C(\|\na\du\|_2^2 \|\De\du\|_2^2+\|\na\db\|_2^2\|\De\db\|_2^2\\
&+\|\De\U\|_2^2\|\na\De\U\|_2^2\|\na\db\|_2^2+\|\De\B\|_2^2\|\na\De\B\|_2^2\|\na\du\|_2^2).
\end{align*}

We can estimate $J_7$ as follows
\begin{align*}
J_7=&\int_{\Om}\big[(B\C\na)\du+(b\C\na)\du+(B\C\na)\U\big]\C\De\db\,dxdydz\\
=&\int_{\Om}\big(\|\na\db\|_2^{\f}\|\De\db\|_2^{\f}\|\na\du\|_2^{\f}\|\De\du\|_2^{\f}\\
&+\|\na\B\|_2^{\f}\|\De\B\|_2^{\f}\|\na\du\|_2^{\f}\|\De\du\|_2^{\f}+\|\na\db\|_2^{\f}\|\De\db\|_2^{\f}\|\na \U\|_2^{\f}\|\De\U\|_2^{\f}\big)\|\De\db\|_2\\
\leq&\frac{1}{10}\|\De\db\|_2^2+\frac{1}{15}\|\De\du\|_2^2+C(\|\na\db\|_2^2\|\De\db\|_2^2+\|\na\du\|_2^2\|\De\du\|_2^2\\
&+\|\na\B\|_2^2\|\De\B\|_2^2\|\na \du\|_2^2+\|\na\U\|_2^2\|\De\U\|_2^2\|\na\db\|_2^2)\\
\leq&\frac{1}{10}\|\De\db\|_2^2+\frac{1}{15}\|\De\du\|_2^2+C(\|\na\db\|_2^2\|\De\db\|_2^2+\|\na\du\|_2^2\|\De\du\|_2^2\\
&+\|\De\B\|_2^2\|\na\De\B\|_2^2\|\na\du\|_2^2+\|\De\U\|_2^2\|\na\De\U\|_2^2\|\na\db\|_2^2).
\end{align*}

Applying the Lemma \ref{b8} and Young inequalities once again, one has
\begin{align*}
J_8=&\v^2\int_{\Om}[(U\C\na)B_3+(u\C\na)B_3+(U\C\na)b_3]\De B_3\,dxdydz\\
\leq&C\v^2\Big[(\|\na \du\|_2^{\f}\|\De\du\|_2^{\f}+\|\na\U\|_2^{\f}\|\De\U\|_2^{\f})\|\na B_3\|_2^{\f}\|\De B_3\|_2^{\f}\\
&+\|\na \du\|_2^{\f}\|\De\du\|_2^{\f}\|\na b_3\|_2^{\f}\|\De b_3\|_2^{\f}\Big]\|\De B_3\|_2\\
\leq&\frac{1}{10}\v^2\|\De B_3\|_2^2+\frac{1}{15}\|\De\du\|_2^2+C(\|\na\du\|_2^2+\v^2\|\na B_3\|_2^2)\\
&\times(\|\De\du\|_2^2+\v^2\|\De B_3\|_2^2+\|\De\U\|_2^2\|\na\De\U\|_2^2+\v^4\|\De\B\|_2^2\|\na\De\B\|_2^2),
\end{align*}
moreover
\begin{align*}
J_9=&\v^2\int_{\Om}\big[(B\C\na)U_3+b\C\na U_3+B\C\na u_3\big]\De B_3\,dxdydz\\
\leq&C\v^2\Big[(\|\na\db\|_2^{\f}\|\De\db\|_2^{\f}+\|\na\B\|_2^{\f}\|\De\B\|_2^{\f})\|\na U_3\|_2^{\f}\|\De U_3\|_2^{\f}\\
+&\|\na\db\|_2^{\f}\|\De\db\|_2^{\f}\|\na u_3\|_2^{\f}\|\De u_3\|_2^{\f}\Big]\|\De B_3\|_2\\
\leq&\frac{1}{10}\v^2\|\De B_3\|_2^2+\frac{1}{10}\v^2\|\De U_3\|_2^2+C(\|\na\db\|_2^2\|\De\db\|_2^2+\v^4\|\na U_3\|_2^2\|\De U_3\|_2^2\\
&+\v^2\|\na\B\|_2^2\|\De\B\|_2^2\|\na U_3\|_2^2+C\v^4\|\na u_3\|_2^2\|\De u_3\|_2^2\|\na \db\|_2^2)\\
\leq&\frac{1}{10}\v^2\|\De B_3\|_2^2+\frac{1}{10}\v^2\|\De U_3\|_2^2+C(\|\na\db\|_2^2+\v^2\|\na U_3\|_2^2)\\
&\times(\|\De\db\|_2^2+\v^2\|\De U_3\|_2^2+\|\De\B\|_2^2\|\na\De\B\|_2^2+\v^4\|\De\U\|_2^2\|\na\De\U\|_2^2).
\end{align*}

For the last term $J_{10}$, we get that
\begin{align*}
J_{10}=&\v^2\int_{\Om}(\p_t b_3+u\C\na b_3-\De b_3-b\C\na u_3)\De B_3\,dxdydz\\
\leq&\v^2(\|\p_t b_3\|_2+\|\De b_3\|_2)\|\De B_3\|_2+C\v^2\|\na\U\|_2^{\f}\|\De\U\|_2^{\f}\|\na b_3\|_2^{\f}\|\De b_3\|_2^{\f}\|\De B_3\|_2\\
+&C\v^2\|\na\B\|_2^{\f}\|\De\B\|_2^{\f}\|\na u_3\|_2^{\f}\|\De u_3\|_2^{\f}\|\De B_3\|_2\\
\leq&\frac{1}{10}\v^2\|\De B_3\|_2^2+C\v^2\big(\|\p_t b_3\|_2^2+\|\De b_3\|_2^2+\|\na\U\|_2^2\|\De\U\|_2^2\\
+&\|\na b_3\|_2^2\|\De b_3\|_2^2+\|\na \B\|_2^2\|\De\B\|_2^2+\|\na u_3\|_2^2\|\De u_3\|_2^2\big)\\
\leq&\frac{1}{10}\v^2\|\De B_3\|_2^2+C\v^2\big(\|\na\p_t\B\|_2^2+\|\na\De\B\|_2^2+\|\De\U\|_2^2\|\na\De\U\|_2^2+
\|\De\B\|_2^2\|\na\De\B\|_2^2\big).
\end{align*}

 In view of the estimates of $J_1-J_{10}$, we derive from (\ref{t28}) the differential inequality
\begin{align*}
&\f\ddt\big(\|\na\du\|_2^2+\v^2\|\na U_3\|_2^2+\|\na \db\|_2^2+\v^2\|\na B_3\|_2^2\big)+\frac{3}{5}\big(\|\De\du\|_2^2+\v^2\|\De U_3\|_2^2+\|\De\db\|_2^2+\v^2\|\De B_3\|_2^2\big)\\
\leq&C_1\big(\|\na\du\|_2^2+\v^2\|\na U_3\|_2^2+\|\na\db\|_2^2+\v^2\|\na B_3\|_2^2\big)\big[\|\De\du\|_2^2+\v^2\|\De U_3\|_2^2+\|\De\db\|_2^2+\v^2\|\De B_3\|_2^2\\
&+(1+\v^4)\|\De\U\|_2^2\|\na\De\U\|_2^2+(1+\v^4)\|\De\B\|_2^2\|\na\De\B\|_2^2\big]\\
&+C_1\v^2(1+\|\De\U\|_2^2+\|\De\B\|_2^2)(\|\na\p_t\U\|_2^2+\|\na\p_t\B\|_2^2+\|\na\De\U\|_2^2+\|\na\De\B\|_2^2).
\end{align*}

By the assumption $\sup_{0\leq s\leq t}\big(\|\na\du\|_2^2+\v^2\|\na U_3\|_2^2+\|\na\db\|_2^2+\v^2\|\De B_3\|_2^2\big)\leq \delta_0^2$, choosing $\delta_0=\sqrt{\frac{1}{10C_1}}$, it follows from the above inequality that
\begin{align*}
&\ddt\big(\|\na \du\|_2^2+\v^2\|\na U_3\|_2^2+\|\na\db\|_2^2+\v^2\|\na B_3\|_2^2\big)+\|\De\du\|_2^2+\v^2\|\De U_3\|_2^2+\|\De\db\|_2^2
+\v^2\|\De B_3\|_2^2\\
\leq&2C_1(1+\v^4)(\|\De \U\|_2^2\|\na\De\U\|_2^2+\|\De\B\|_2^2\|\na\De\B\|_2^2)(\|\na \du\|_2^2+\v^2\|\na U_3\|_2^2+\|\na\db\|_2^2+\v^2\|\na B_3\|_2^2)\\
&+2C_1\v^2(1+\|\De\U\|_2^2+\|\De\B\|_2^2)(\|\na \p_t\U\|_2^2+\|\na\p_t\B\|_2^2+\|\na\De\U\|_2^2+\|\na \De\B\|_2^2),
\end{align*}
recalling  $(\du, U_3)|_{t=0}=0$ and  $(\db,B_3)|_{t=0}=0$, it follows from the Gronwall inequality that
\begin{align*}
&\sup_{0\leq s\leq t}(\|\du\|_{H^1}^2+\v^2\|U_3\|_{H^1}^2+\|\db\|_{H^1}^2+\v^2\|B_3\|_{H^1}^2)\\
&+\int_0^t(\|\De\du\|_2^2+\v^2\|\De U_3\|_2^2+\|\De\db\|_2^2+\v^2\|\De B_3\|_2^2)\,ds\\
\leq &2C_1\v^2e^{2C_1(1+\v^4)\int_0^t(\|\De\U\|_2^2\|\na\De\U\|_2^2+\|\De\B\|_2^2\|\na\De\B\|_2^2\,ds)}\\
&\times\int_0^t(1+\|\De\U\|_2^2+\|\De\B\|_2^2)(\na\p_t\U\|_2^2+\|\na\p_t\B\|_2^2+\|\na\De\U\|_2^2+\|\na\De\B\|_2^2)\,ds,
\end{align*}
proving the conclusion.
\end{proof}

Thanks to  Propositions \ref{m7}-\ref{m8} and  Proposition \ref{t1}, we can prove the following results.
\vskip .2in
\begin{prop}\label{m9}
There is a positive constant $\v_0$ that only depends on $\|\U_0\|_{H^2}$, $\|\B_0\|_{H^2}$, $L_1$ and $L_2$, so that for any $\v\in(0,\v_0)$, there is a unique global strong solution $(u_{\v}, b_{\v})$  to the SMHD, subject to (\ref{pm1})-(\ref{b3}). In addition, we have the following  estimate
\begin{align*}
&\sup_{0\leq s< \infty}(\|\du\|_{H^1}^2+\v^2\|U_3\|_{H^1}^2+\|\db\|_{H^1}^2+\v^2\|B_3\|_{H^1}^2)\\
&+\int_0^\infty(\|\na\du\|_{H^1}^2+\v^2\|\na U_3\|_{H^1}^2+\|\na\db\|_{H^1}^1+\v^2\|\na B_3\|_{H^1}^2)\,dt\\
&\leq C\v^2,
\end{align*}
where $C$ is a positive constant depending only on $\|\U_0\|_{H^2}, \|\B_0\|_{H^2},L_1$ and $L_2$.
\end{prop}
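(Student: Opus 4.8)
The plan is to run a continuity (bootstrap) argument that promotes the local strong solution of the SMHD to a global one, using the smallness of the difference $(\du,U_3,\db,B_3)$ encoded in Propositions~\ref{m7}--\ref{m8} together with the uniform-in-time control of the PEM solution furnished by Proposition~\ref{t1}. First I would take $\delta_0>0$ as in Proposition~\ref{m8} and set
\[
T_\v^{**}:=\sup\Big\{t\in[0,T_\v^*):\ \sup_{0\le s\le t}\big(\|\na\du\|_2^2+\v^2\|\na U_3\|_2^2+\|\na\db\|_2^2+\v^2\|\na B_3\|_2^2\big)(s)\le\delta_0^2\Big\}.
\]
Since $(\du,U_3,\db,B_3)$ vanishes at $t=0$ and the bracketed quantity is continuous in $t$ (the strong solution is smooth for $t>0$, as recalled before Proposition~\ref{m7}), we have $T_\v^{**}>0$, and on $[0,T_\v^{**})$ the hypothesis of Proposition~\ref{m8} is met, so its $H^1$ estimate is available there.

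Next I would bound the right-hand side of that estimate \emph{independently of $\v$ and of $t$}. By Proposition~\ref{t1} one has $\sup_{t\ge0}(\|\De\U\|_2^2+\|\De\B\|_2^2)\le C$ and $\int_0^\infty(\|\na\p_t\U\|_2^2+\|\na\p_t\B\|_2^2+\|\na\De\U\|_2^2+\|\na\De\B\|_2^2)\,dt\le C$; in particular $\int_0^\infty(\|\De\U\|_2^2\|\na\De\U\|_2^2+\|\De\B\|_2^2\|\na\De\B\|_2^2)\,dt\le C$. Hence, taking $\v\le 1$, the Gronwall exponential appearing in Proposition~\ref{m8} is bounded by a fixed constant and the remaining time integral there is $\le C$, so for every $t\in[0,T_\v^{**})$
\begin{align*}
&\sup_{0\le s\le t}\big(\|\du\|_{H^1}^2+\v^2\|U_3\|_{H^1}^2+\|\db\|_{H^1}^2+\v^2\|B_3\|_{H^1}^2\big)\\
&\qquad+\int_0^t\big(\|\De\du\|_2^2+\v^2\|\De U_3\|_2^2+\|\De\db\|_2^2+\v^2\|\De B_3\|_2^2\big)\,ds\le C_*\v^2,
\end{align*}
with $C_*=C_*(\|\U_0\|_{H^2},\|\B_0\|_{H^2},L_1,L_2)$ independent of $\v$ and $t$.

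Then I would close the bootstrap by choosing $\v_0\in(0,1]$ so small that $C_*\v_0^2<\delta_0^2/2$. For $\v\in(0,\v_0)$ the displayed bound forces $\sup_{0\le s<T_\v^{**}}(\|\na\du\|_2^2+\v^2\|\na U_3\|_2^2+\|\na\db\|_2^2+\v^2\|\na B_3\|_2^2)\le C_*\v^2<\delta_0^2/2$, so the inequality defining $T_\v^{**}$ is never saturated and therefore $T_\v^{**}=T_\v^*$. If $T_\v^*$ were finite, then writing $u_\v=u+U_\v$ and $b_\v=b+B_\v$, the smoothness of the PEM solution on $[0,T_\v^*]$ and the $H^1$-bound just obtained would keep $(u_\v,b_\v)$ bounded in $H^1(\Om)$ as $t\uparrow T_\v^*$, contradicting the standard blow-up criterion for strong solutions of the SMHD (cf.\ \cite{DL}). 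Hence $T_\v^*=\infty$; letting $t\to\infty$ in the displayed estimate and combining with Proposition~\ref{m7} (and using that $\|\na\,\cdot\,\|_{H^1}^2$ is equivalent to $\|\na\,\cdot\,\|_2^2+\|\De\,\cdot\,\|_2^2$ on the periodic domain) yields the asserted bound with $C$ depending only on $\|\U_0\|_{H^2}$, $\|\B_0\|_{H^2}$, $L_1$ and $L_2$.

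The step I expect to be the main obstacle is the uniformity in the second paragraph: one must ensure that \emph{every} time integral and the Gronwall exponential produced by Proposition~\ref{m8} are finite with bounds that neither deteriorate as $\v\to0$ nor as the time horizon grows. This is exactly where the global-in-time regularity of the PEM solution from Proposition~\ref{t1}---namely $\p_t\U,\p_t\B\in L^2(0,\infty;H^1)$, $\na\U,\na\B\in L^2(0,\infty;H^2)$, and the uniform $H^2$ bound---is indispensable; once that is in place, the rest is a routine continuation scheme together with the elementary bookkeeping of collecting constants.
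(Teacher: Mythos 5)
Your proposal is correct and follows essentially the same route as the paper: a continuity/bootstrap argument on the quantity $\sup_{0\le s\le t}(\|\na\du\|_2^2+\v^2\|\na U_3\|_2^2+\|\na\db\|_2^2+\v^2\|\na B_3\|_2^2)$ using the conditional $H^1$ estimate of Proposition~\ref{m8}, with the Gronwall factor and forcing integrals rendered uniform in $t$ and $\v$ via the global PEM bounds of Proposition~\ref{t1}, followed by choosing $\v_0$ so that the resulting bound $C_*\v^2<\delta_0^2/2$ never saturates the bootstrap, and concluding $T_\v^*=\infty$ by the standard continuation criterion. The paper phrases the last step as extending the solution via local well-posedness rather than quoting a blow-up criterion, but this is the same argument.
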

\begin{proof}
Denote $T^*_{\v}$ be the maximal existence time of the strong solutions $(\U_{\v},u_{3,\v}, \B_{\v},b_{3,\v})$ to the SMHD,  subject to the  conditions (\ref{pm1})-(\ref{b3}). According to  Proposition \ref{5} and Proposition \ref{m7}, we get the estimate
\begin{align}\label{m10}
&\sup_{0\leq s<T_{\v}^*}(\|\du\|_2^2+\v^2\|U_{3}\|_2^2+\|\db\|_2^2+\v^2\|B_{3}\|_2^2)\notag\\
&+\int_0^{T_{\v}^*} (\|\na \du\|_2^2+\v^2\|\na U_{3}\|_2^2+\|\na \db\|_2^2+\v^2\|\na B_{3}\|_2^2)\,ds\notag\\
\leq& R_1\v^2,
\end{align}
where $R_1$ is a positive constant depending only on $\|\U_0\|_{H^1}, \|\B_0\|_{H^1}, L_1$ and $L_2$. Let $\delta_0$ be a constant depending only on  $L_1$ and $L_2$ in Proposition \ref{m8}. We define
$$t_{\v}^*:=\sup\Big\{t\in (0, T_{\v}^*)\Big|\sup_{0\leq s\leq t}(\|\na \du\|_2^2+\|\na\db\|_2^2+\v^2\|\na U_{3}\|_2^2+\v^2\|\na B_{3}\|_2^2)\leq\delta_0^2\Big\}.$$

On the basis of  Proposition \ref{m8} and Proposition \ref{t1}, for any $t\in[0,t_{\v}^*)$, the following estimate holds
\begin{align}\label{m11}
&\sup_{0\leq s\leq t}(\|\na \du\|_2^2+\|\na \db\|_2^2+\v^2\|\na U_{3}\|_2^2+\v^2\|\na B_{3}\|_2^2)\notag\\
&+\int_0^t(\|\De\du\|_2^2+\v^2\|\De U_{3}\|_2^2+\|\De\db\|_2^2+\v^2\|\De B_{3}\|_2^2)\,ds\notag\\
\leq&R_2\v^2,
\end{align}
where $R_2$ is a positive constant depending only on $\|\U_0\|_{H^2},|\B_0\|_{H^2}, L_1$ and $L_2$. Setting $\v_0=\delta_0\sqrt{\frac{1}{2R_2}}$, for any $\v\in(0,\v_0)$,  so the inequality above implies that
\begin{align*}
&\sup_{0\leq s\leq t}(\|\na \du\|_2^2+\|\na\db\|_2^2+\v^2\|\na U_{3}\|_2^2+\v^2\|\na B_{3}\|_2^2)\\
&+\int_0^t(\|\De\du\|_2^2+\v^2\|\De U_{3}\|_2^2+\|\De\db\|_2^2+\v^2\|\De B_{3}\|_2^2)\,ds\\
\leq&\frac{\delta_0^2}{2},
\end{align*}
and for any $t_0\in[0, t_{\v}^*)$, especially when it gives
\begin{align*}
\sup_{0\leq s< t_{\v}^*}(\|\na \du\|_2^2+\|\na\db\|_2^2+\v^2\|\na U_{3}\|_2^2+\v^2\|\na B_{3}\|_2^2)
\leq\frac{\delta_0^2}{2}.
\end{align*}

So that, according to the definition of $t_{\v}^*$, we must have $t_{\v}^*=T_{\v}^*$. On account of  this, it is obvious that (\ref{m11}) holds for any $t\in[0,T_{\v}^*)$.

We assert that it must have $T_{\v}^*=\infty$. Assume in contradiction that  $T_{\v}^*<\infty$, then, recalling that (\ref{m11}) is true for any $t\in[0,T_{\v}^*)$, by the local well-posedness result of the SMHD, we can extend the strong solution $(u_{\v}, b_{\v})$ beyond $T_{\v}^*$, which contradicts to the  definition of $T_{\v}^*$. Therefore, combining (\ref{m10}) and  (\ref{m11}) to draw the  conclusion.
\end{proof}
\vskip .2in
On account of  Proposition \ref{m9}, one can  give  the proof of Theorem \ref{t2} in the  following.
\vskip .2in
\begin{proof}[Proof of Theorem \ref{t2}]
Let $\v_0$ depend only on $\|\U_0\|_{H^2}$, $\|\B_0\|_{H^2}$, $L_1$ and $L_2$ in Proposition \ref{m9}. According to Proposition \ref{m9}, for any $\v\in (0,\v_0)$, there is a unique global strong solution $(u_{\v}, b_{\v})$ to the SMHD, subject to the conditions (\ref{pm1})-(\ref{b3}). In addition, we have the  following estimate
 \begin{align*}
&\sup_{0\leq s<\infty}(\|\du\|_{H^1}^2+\v^2\|U_{3}\|_{H^1}^2+\|\db\|_{H^1}^2+\v^2\|B_{3}\|_{H^1}^2)(t)\\
&+\int_0^\infty(\|\na\du\|_{H^1}^2+\v^2\|\na U_{3}\|_{H^1}^2+\|\na\db\|_{H^1}^2+\v^2\|\na B_{3}\|_{H^1}^2)\\
\leq&C(\|\U_0\|_{H^2}, \|\B_0\|_{H^2}, L_1, L_2)\v^2,
\end{align*}
where $(\du,U_{3})=(\U_{\v},u_{3,\v})-(\U,u_3)$ and  $(\db,B_{3})=(\B_{\v},b_{3,\v})-(\B,b_3)$. This proves the estimate stated in the theorem, of which the strong convergence is only a direct  deduction of this estimate. This completes the proof of Theorem \ref{t2}.
\end{proof}

\vskip .2in
\centerline{\bf Acknowledgments}
\vskip .1in
This work is supported by the National Natural Science Foundation of China grant 11971331, 12125102, and Sichuan Youth  Science and Technology Foundation 2021 JDTD0024.
\vskip .2in

\centerline{\bf Declarations}
{\bf Competing interests} The authors have no relevant financial or non-financial interests to disclose.
\vskip .1in
{\bf Funding} The work of L. Du and  D. Li is funded by the National Natural Science Foundation of China (grant 11971331, 12125102), and Sichuan Youth  Science and Technology Foundation (2021 JDTD0024).
\vskip .1in
{\bf Data availability statement} Our manuscript has no associated data.

\end{document}